\newcommand{\sps}{\vspace{3pt}}   
\newcommand{\spm}{\vspace{6pt}}
 \newcommand{\GL}{\operatorname{GL}}
 \newcommand{\Div}{\operatorname{Div}}
 \newcommand{\Aut}{\operatorname{Aut}}
 \newcommand{\NS}{\operatorname{NS}}
 \newcommand{\rank}{\operatorname{rank}}
 \newcommand{\Hom}{\operatorname{Hom}}
 \newcommand{\End}{\operatorname{End}}
 \newcommand{\charec}{\operatorname{char}}
 \newcommand{\disc}{\operatorname{disc}}
     \newcommand{\cont}{\operatorname{cont}}
     \newcommand{\adj}{\operatorname{adj}}
     \newcommand{\cl}{\operatorname{cl}}
      \newcommand{\lcm}{\operatorname{lcm}}
\newcommand{\Mod}[1]{\ (\mathrm{mod}\ #1)}
\newcommand{\Q}{\mathbb{Q}}
\newcommand{\Z}{\mathbb{Z}}
\theoremstyle{plain}
\newtheorem{thm}{Theorem}[section]
\newtheorem{lem}[thm]{Lemma}
\newtheorem{cor}[thm]{Corollary}
\newtheorem{prop}[thm]{Proposition}
\theoremstyle{definition}
\newtheorem{ex}[thm]{Example}
\theoremstyle{remark}
\newtheorem{remark}[thm]{Remark}
\numberwithin{equation}{section}
\DeclareTextFontCommand{\emph}{\em} 
\begin{document}

 \title[Automorphism Group of a Genus 2 Curve]{The Refined Humbert Invariant for an Automorphism Group of a Genus 2 Curve}


\author{Harun K{\i}r}
\address{Department of Mathematics and Statistics, Jeffery Hall, 99 University Avenue,
Queen’s University, Kingston, Ontario, K7L 3N6, Canada}
\curraddr{}
\email{19hk16@queensu.ca}
\thanks{Key words and phrases. Curves of genus 2, automorphism groups, ternary quadratic forms, elliptic curves with complex multiplication, product surfaces, Jacobians, Humbert invariant, Humbert surfaces, elliptic subcovers.}

\date{}

\begin{abstract}
The purpose of this paper is to list the refined Humbert invariants for a given automorphism group of a curve $C/K$ of genus 2 over an algebraically closed field $K$ with $\charec(K) = 0$.  This invariant is an algebraic generalization of the (usual) \textit{Humbert invariant}. It is a  positive definite quadratic form associated to the curve $C$, and it encodes many geometric properties of the curve. The paper has a special interest in the cases where $\Aut(C)\simeq D_4$ or $D_6$. In these cases, several applications of the main results are discussed including the curves with elliptic subcovers of a given degree.
\end{abstract}

\maketitle


\section{Introduction}

The curves of genus 2 with a group of automorphisms  has been considered by many authors, e.g. see the references in \S1.5 of \cite{accola}, \cite{Igusa}, and \cite{cardona_quer}. The classification of these curves for a given automorphism group is achieved in different ways; cf.\ \cite{accola}, \cite{cardona_quer}, \cite{shaska2004elliptic}. In the present article, we also study classifying these curves for a given automorphism group, controlling the associated quadratic forms. In doing so, we obtain interesting applications to the curves of genus 2; cf.\ \S\ref{generalizedHumbert} and \ref{s: elliptic_subcovers} below. 

Let $C/K$ be a curve of genus 2 over an algebraically closed field $K$ with characteristic 0. Kani\cite{kani1994elliptic} introduced a positive (definite) quadratic form $q_C$ attached to the curve $C;$ (cf.\ \cite{kani2014jacobians} or \S \ref{refinedhumbert} below.) This form is called the \textit{refined Humbert invariant of} $C.$ 
 This invariant is a useful ingredient since it translates geometric problems into arithmetic problems. An elegant illustration of making use of the refined Humbert invariant can be found in Kani\cite{kani2014jacobians},\cite{MJ}, and in the author's Ph.D. thesis.

 Kani showed that the automorphism group $\Aut(C)$ of a genus 2 curve $C$ can be determined from its associated refined Humbert invariant $q_C;$ cf.\ Theorem 25 of \cite{cas} (or Table \ref{Tab:autC} below).
 Moreover,  in certain cases, he gave a list of the refined Humbert invariants having the properties in his result; cf.\ Proposition 28 of \cite{cas}. 
 By extending the idea of the argument in the proof of his result, we give a complete list of the possible  ternary quadratic forms.  

\begin{thm}   
\label{firstmainresultPART2}
Let $q_C$ be a  ternary quadratic form, so its Jacobian $J_C$ is isogenous to a product $E\times E$ of an elliptic curve $E/K$ with complex multiplication.
    Then $|\Aut(C)| > 2$ if and only if the refined Humbert invariant $q_C$ is equivalent to one of the ternary quadratic  forms listed in \emph{Propositions \ref{autgl3}, \ref{classificationD6}, \ref{classificationD4} and \ref{classificationC2_C2}} below.
    
    In addition, if $q$ is one of the forms listed in \emph{Propositions  \ref{autgl3}, \ref{classificationD6}, \ref{classificationD4} and \ref{classificationC2_C2}(i)} below, then there is a curve $C/K$  of genus $2$  
such that $q_C$ is equivalent to $q.$ 
\end{thm}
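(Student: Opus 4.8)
The plan is to treat Theorem~\ref{firstmainresultPART2} as two separate assertions organized around the dictionary between $\Aut(C)$ and $q_C$ recorded in Theorem~25 of \cite{cas} (Table~\ref{Tab:autC}). The backward implication of the equivalence is immediate: each of Propositions~\ref{autgl3}, \ref{classificationD6}, \ref{classificationD4} and \ref{classificationC2_C2} is proved by first fixing the automorphism group and then extracting the form, so if $q_C$ is equivalent to one of the listed forms, the corresponding proposition together with Theorem~25 of \cite{cas} shows that $\Aut(C)$ is $\GL_2(\F_3)$, $D_6$, $D_4$ or $C_2\times C_2$, each of order $>2$. For the forward implication I would first note that the hypothesis is equivalent to $\rho(J_C)=4$, i.e.\ the N\'eron--Severi rank is maximal: since $q_C$ is obtained from $\NS(J_C)$ modulo the class of the principal polarization $\Theta$, a ternary $q_C$ forces $J_C\sim E\times E$ with $E$ of CM type, and conversely.

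Next I would invoke the classification of reduced automorphism groups $\overline{\Aut}(C)=\Aut(C)/\langle\iota\rangle$ of a genus~$2$ curve, where $\iota$ is the hyperelliptic involution: the possibilities are $1,\,C_2,\,V_4,\,S_3,\,C_5$ and $S_4$. The assumption $|\Aut(C)|>2$ rules out $\overline{\Aut}(C)=1$, while the ternary hypothesis rules out $C_5$, since the corresponding full group $C_{10}$ has simple Jacobian with quartic CM, so $J_C\not\sim E\times E$ and $q_C$ is not ternary. The four surviving reduced groups $C_2,\,V_4,\,S_3,\,S_4$ correspond precisely to $\Aut(C)\in\{C_2\times C_2,\,D_4,\,D_6,\,\GL_2(\F_3)\}$, handled by the four propositions. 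In each case the mechanism is the same: $\Aut(C)$ acts on $\NS(J_C)$ fixing $\Theta$, so $\overline{\Aut}(C)$ acts by isometries on the ternary lattice underlying $q_C$; combining this symmetry constraint with the arithmetic conditions that a genuine refined Humbert invariant must satisfy (primitivity, the admissible discriminants, and the values represented, which count the elliptic subcovers) pins the form down up to equivalence. Taking the union over the four groups yields the complete list and closes the forward direction.

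For the second, existence, statement I would realize each listed form by an explicit curve. For $\GL_2(\F_3)$, $D_6$ and $D_4$ the relevant strata in $\mathcal{M}_2$ are parametrized by well-known families (for instance $y^2=x^5-x$ for $\GL_2(\F_3)$ and $y^2=x^6-1$ for $D_6$, and a one-parameter family in the $D_4$ case); specializing to the members whose elliptic quotients acquire CM produces $\rho(J_C)=4$, after which one computes $q_C$ directly and matches it against the listed forms. For $C_2\times C_2$ one uses the bielliptic parametrization with elliptic quotients $E_1,E_2$, imposes $E_1\sim E_2\sim E$ with $E$ of CM type, and verifies that precisely the forms of part~(i) are attained.

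The main obstacle is this last case. I expect that some forms satisfying all the necessary symmetry and arithmetic constraints, namely those gathered in part~(ii) of Proposition~\ref{classificationC2_C2}, are \emph{not} realized, and that the reason is that they correspond to a decomposable principally polarized abelian surface $E_1\times E_2$, which can never be the Jacobian of a smooth genus~$2$ curve. Establishing that every part~(i) form occurs while the part~(ii) forms are obstructed is the delicate step, and it is exactly why the existence assertion is stated only for the forms of Propositions~\ref{autgl3}, \ref{classificationD6}, \ref{classificationD4} and \ref{classificationC2_C2}(i).
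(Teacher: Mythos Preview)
Your high-level assembly of the equivalence from Propositions~\ref{autgl3}--\ref{classificationC2_C2} is correct and is exactly what the paper does: rule out $C_{10}$ because its Jacobian is simple (so $q_C$ is not ternary), then split the remaining groups among the four propositions via Table~\ref{Tab:autC}. Where you diverge is in the description of the mechanism inside those propositions and in the existence argument.

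For the classification propositions, the paper does not analyze the $\overline{\Aut}(C)$-action on the N\'eron--Severi lattice directly. Instead it uses two numerical bridges: the identity $r_4(q_C)=i(\Aut(C))-1$ (so $\Aut(C)$ is read off from a representation number) and the divisibility $a(q_C)\mid|\Aut^+(q_C)|$. One then puts $q_C$ in Eisenstein reduced form and invokes Dickson's tables of automorphs (Theorem~105 of \cite{dicksonsbook}) to enumerate reduced ternary forms with prescribed $|\Aut^+|$; this is the content of Theorems~\ref{propaut6_12_24} and~\ref{LemAut4_8}. The arithmetic constraints you allude to enter only as the congruence $q_C\equiv 0,1\pmod 4$ and $\cont(q_C)\in\{1,4\}$. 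Your lattice-symmetry approach could in principle work, but it is not what is actually carried out.

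For existence, the paper does not produce explicit curves and compute their $q_C$. It works entirely on the form side: in the imprimitive case, Theorem~2 of \cite{Kir} says a form with content $4$ is a refined Humbert invariant iff $\tfrac12 q$ is improperly primitive (Proposition~\ref{allformsarehumbert}); in the primitive case, Proposition~\ref{all_primitiveforms_humbert} checks Brandt's genus invariants and assigned characters against the criteria of \cite{refhum}. Your explicit-family approach would require, for each of the infinitely many listed forms, locating a CM specialization and computing $q_C$, which is considerably harder. Finally, your diagnosis of why Proposition~\ref{classificationC2_C2}(ii) is excluded is incorrect: those forms are reduced with minimum $4$, so they do not represent $1$, and by (\ref{thetaisirreducible}) any principally polarized abelian surface realizing them would be a Jacobian, not a decomposable surface. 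The paper does not claim these forms are obstructed; it says the genus-theoretic method of Proposition~\ref{all_primitiveforms_humbert} does not readily decide whether they occur as refined Humbert invariants, and leaves the question open.
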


We have actually finer results than what was stated in Theorem \ref{firstmainresultPART2}. More precisely, Propositions  \ref{autgl3}, \ref{classificationD6}, \ref{classificationD4} and \ref{classificationC2_C2} below explicitly show which ternary (quadratic) forms belong to a specific automorphism group; cf.\ Table \ref{Tab:k(C)} below.  Note that we excluded the forms listed in Proposition \ref{classificationC2_C2}(ii) in the last assertion of Theorem \ref{firstmainresultPART2}. Those forms are primitive, and it is hard for those forms to decide whether they are equivalent to a form $q_C$ or not. By using the complete classification of the primitive ternary forms $q_C$ in \cite{refhum}, even though we will discuss whether $q\sim q_C,$ for some curve $C$ for all primitive forms $q$ listed in Propositions \ref{classificationD6}, \ref{classificationD4} and \ref{classificationC2_C2}(i), we are not able to do the same thing for the primitive forms listed in Proposition \ref{classificationC2_C2}(ii). The main reason is that it seems that our method  in the proof of Proposition \ref{all_primitiveforms_humbert} below is not applicable  for this case.

One can apply the list provided by Theorem \ref{firstmainresultPART2} to the interesting topics discussed in Cardona and Quer\cite{cardona_quer} and Kani\cite{ESCII}, \cite{SubcoversofCurves}. More precisely, they characterized genus 2 curves whose automorphism group contains (up to isomorphism) the dihedral group $D_n$ of order $2n,$ for $n=6 \textrm{ and }4$ in different ways. With respect to our approach among them, Kani proved in Theorem 4(a) of \cite{ESCII} that:
\begin{equation}
\label{eq: statement_thm_4a_D6}
     q_C\text{ primitively represents the binary form } 4 x^2+4xy+ 4 y^2\ \Leftrightarrow \ D_6\leqslant \Aut(C),
\end{equation}
\begin{equation}
\label{eq: statement_thm_4a_D4}
     q_C \text{ primitively represents the binary form } 4 x^2+4 y^2 \ \Leftrightarrow \ D_4\leqslant \Aut(C).
\end{equation}
In the special case of Theorem \ref{firstmainresultPART2} that $|\Aut(C)|\geq 12$,  we have a simple explicit list for the characterization of (\ref{eq: statement_thm_4a_D6}):
 
 \begin{thm} 
 \label{[4,4,4]}
Let $C/K$ be a curve of genus $2.$ Then  $\Aut(C)$ contains a subgroup isomorphic to the dihedral group $D_6$ if and only if the refined Humbert invariant $q_C$ is equivalent to $4x^2+4xy+4y^2$ or  
     $4x^2+4y^2+cz^2+4yz+4xz+4xy\text{ or } 4x^2+4y^2+cz^2-4xy,$ where $c\equiv0,1\Mod{4},$ and $c>1.$
     
     In addition, if $q$ is one of the forms listed in the previous assertion, then there is a curve $C/K$ of genus $2$  such that $q_C$ is equivalent to $q.$ 
\end{thm}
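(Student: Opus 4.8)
The plan is to use Kani's equivalence (\ref{eq: statement_thm_4a_D6}) as the bridge: the group-theoretic condition $D_6\leqslant\Aut(C)$ is literally the arithmetic condition that $q_C$ primitively represent the binary form $g:=4x^2+4xy+4y^2$, so both directions reduce to deciding which refined Humbert invariants primitively represent $g$. Since $q_C$ is positive definite of rank at most $3$ and a unary form cannot primitively represent the rank-$2$ form $g$, the invariant $q_C$ must be binary or ternary, and I would treat these two cases separately.

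The reverse implication is the easy one. If $q_C$ is equivalent to one of the three listed forms, I exhibit a primitive representation of $g$ and then quote (\ref{eq: statement_thm_4a_D6}). For $4x^2+4xy+4y^2$ there is nothing to prove; for $4x^2+4y^2+cz^2+4yz+4xz+4xy$ the restriction to the plane $z=0$ is $4x^2+4xy+4y^2=g$, and for $4x^2+4y^2+cz^2-4xy$ it is $4x^2-4xy+4y^2$, which is equivalent to $g$ via $y\mapsto -y$. In each case the first two basis vectors span a saturated rank-$2$ sublattice, so the representation is primitive, whence $D_6\leqslant\Aut(C)$.

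For the forward implication, assume $D_6\leqslant\Aut(C)$, so $q_C$ primitively represents $g$. If $q_C$ is binary then a primitive rank-$2$ sublattice of a rank-$2$ lattice is the whole lattice, forcing $q_C\sim g$. If $q_C$ is ternary then, by Theorem \ref{firstmainresultPART2}, $J_C$ is isogenous to $E\times E$ with $E$ having complex multiplication and $q_C$ is equivalent to one of the forms in Propositions \ref{autgl3}, \ref{classificationD6}, \ref{classificationD4} and \ref{classificationC2_C2}. I would then isolate exactly those forms that primitively represent $g$: the $D_4$ and $C_2\times C_2$ forms cannot, since a primitive representation of $g$ would by (\ref{eq: statement_thm_4a_D6}) force $D_6\leqslant\Aut(C)$, impossible for a group of order dividing $8$; whereas the forms attached to any larger group that contains $D_6$ (for instance $\mathrm{GL}_2(\mathbb{F}_3)$, which contains $D_6$ as its subgroup of upper-triangular matrices) do represent $g$ and are absorbed as special values of the parameter $c$. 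The remaining task is to match Proposition \ref{classificationD6} with the two ternary families by writing $q_C=g(x,y)+axz+byz+cz^2$ and normalizing the pair $(a,b)$ using the substitutions $z\mapsto z+\alpha x+\beta y$ together with the order-$12$ automorphism group of $g$; the congruence $c\equiv0,1\pmod 4$ then comes from the fact that a refined Humbert invariant represents only integers $\equiv0,1\pmod 4$, so that $c=q_C(e_3)\equiv 0,1$, while the bound $c>1$ expresses that $q_C$ must not represent $1$, which is exactly the requirement that $J_C$ be a genuine Jacobian rather than a product-polarized surface.

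For the realizability statement I would appeal to the last assertion of Theorem \ref{firstmainresultPART2}, which already produces the imprimitive members of the two families (those with $c\equiv0\pmod 4$) as some $q_C$; the primitive members (those with $c\equiv1\pmod 4$) are settled through the classification of primitive refined Humbert invariants used in Proposition \ref{all_primitiveforms_humbert}, and the binary form $g$ is realized by a generic member of the one-parameter $D_6$-family $y^2=x^6+ax^3+1$, whose Jacobian is isogenous to $E\times E$ without complex multiplication and hence has binary $q_C\sim g$. I expect the main obstacle to be the ternary normalization: showing that the reduction of $(a,b)$ collapses to precisely these two inequivalent families, that the $\mathrm{GL}_2(\mathbb{F}_3)$-forms are correctly absorbed, and that the arithmetic of refined Humbert invariants cuts out exactly $c\equiv0,1\pmod 4$ with $c>1$. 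The realizability of the primitive ternary members is the second delicate point, being precisely the type of question that Proposition \ref{all_primitiveforms_humbert} is designed to answer.
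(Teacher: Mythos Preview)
Your proposal is correct and follows essentially the same route as the paper: both use (\ref{eq: statement_thm_4a_D6}) to convert the group condition into the representation question, and both split on whether $q_C$ is binary or ternary. In the ternary case the paper argues via $r_4(q_C)\geq r_4(g)=6$ and Table~\ref{Tab:autC} to land in the three cases $r_4(q_C)\in\{6,8,12\}$, invoking Propositions~\ref{autgl3} and~\ref{classificationD6} directly; you reach the same destination through Theorem~\ref{firstmainresultPART2} and then eliminate the $D_4$ and $C_2\times C_2$ lists by the group-theoretic observation that those automorphism groups cannot contain $D_6$. These are equivalent bookkeepings of the same classification.

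One simplification: the ``normalization'' step you anticipate is unnecessary. Proposition~\ref{classificationD6} already outputs the explicit forms $q_{1,c}=[4,4,c,4,4,4]$ and $q_{2,c}=[4,4,c,0,0,-4]$ for $c>4$, which are literally the two ternary families in the theorem statement, and Proposition~\ref{autgl3} hands you the boundary cases $q_{1,4}$ and $q_{2,4}$; there is nothing further to reduce. Your explicit treatment of the binary realizability (via a generic member of the $D_6$-family) is actually more thorough than the paper's proof, which cites only Propositions~\ref{allformsarehumbert} and~\ref{all_primitiveforms_humbert} for the second assertion and leaves the binary case implicit.
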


 The first assertion of this result has been already proved for odd numbers $c;$ cf.\ Proposition 28 of \cite{cas} and (\ref{eq: statement_thm_4a_D6}).   There is a similar explicit list (but more complicated) in the case that $D_4;$ cf.\ Theorem \ref{[4,0,4]} below.

We will see in Section \ref{generalizedHumbert} that Theorem \ref{[4,4,4]} (and Theorem \ref{[4,0,4]} below) lead to some interesting results. To state more precisely, we recall that Kani\cite{MJ} introduced the concept of a \textit{generalized Humbert scheme} $H(q)$ which is associated with a given quadratic form $q$. This set is defined by using the refined Humbert invariant, and it is a generalization of a \textit{Humbert surface $H_n$ of invariant} $n$; cf.\ \S3 of \cite{MJ} or \S\ref{generalizedHumbert} below (also, see van der Geer\cite{van2012hilbert}, Ch. IX for a contemporary treatment of Humbert surfaces).

It is interesting  to understand the intersections of the generalized Humbert schemes and/or Humbert surfaces; cf.\ \cite{SubcoversofCurves}. 
For example, McMullen\cite{Mcmullen} posed a question regarding the description of the intersection of Humbert surfaces $H_N \cap H_M$.  He solved  the problem for the intersection $H_N \cap H_1$. 
In particular, Kani\cite{SubcoversofCurves}, Proposition 5.2(d), gave a formula/algorithm for the intersection $H_n\cap H_m$ of two Humbert surfaces; cf.\ Example \ref{ex: intersections_3_humbert_surfaces} below. This formula involves the $H(q)$'s for certain binary  forms $q$'s.  The results above can be used to give a formula for the intersection $H(q)\cap H_m$  for a binary  form $q$ and integer $m$ in certain cases; cf.\ Corollaries \ref{intersectioncorollary} and \ref{intersectioncorollary2} below. This formula involves the $H(q)$'s for certain ternary forms $q$'s. Hence, our formulas in those corollaries can be seen as a contribution to McMullen's question.

Recently, by Corollary 4 of \cite{Kir} we have that $H_N \cap H_M \neq \varnothing$ if $H_N$ and $H_M$ are not empty. This leads to the question of whether a similar result is true for the generalized Humbert schemes $H(q),$ where $q$ is binary. It turns out that the answer is negative.

\begin{cor}
\label{emptyH(q1)H(q2)}
Let $q_1=4x^2+4y^2$ and $q_2=9x^2+6xy+13y^2.$ Then the $H(q_i)$ are irreducible curves, for $i=1,2,$ but $H(q_1)\cap H(q_2) = \varnothing$.
\end{cor}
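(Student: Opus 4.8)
The plan is to treat the two assertions — irreducibility of each $H(q_i)$ and emptiness of their intersection — separately, reducing both to statements about the ternary forms $q_C$. Throughout I use that a point of $H(q)$ for a binary form $q$ is the class of a genus $2$ curve $C$ whose refined Humbert invariant $q_C$ primitively represents $q$. Since imposing a Humbert surface $H_n$ is a single condition on the three-dimensional moduli, imposing a binary form (two ``independent'' representation conditions) cuts the dimension down by two, so each $H(q_i)$ has dimension $1$; this is what must be upgraded to ``irreducible curve.''

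For the irreducibility of $H(q_1)$ I would invoke (\ref{eq: statement_thm_4a_D4}): a curve lies on $H(q_1)$ precisely when $D_4 \leqslant \Aut(C)$, so $H(q_1)$ is the $D_4$-locus, the image of a single rational one-parameter family, hence irreducible. For $H(q_2)$ there is no automorphism interpretation, so I would instead realize it as an irreducible one-parameter family directly: determine, using the classification of Theorem \ref{firstmainresultPART2} together with the primitive classification of \cite{refhum}, the admissible reduced ternary forms $q_C$ that primitively represent $q_2$, and show they sweep out a single family with no extraneous component coming from a different representation type.

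For the emptiness, suppose $[C] \in H(q_1) \cap H(q_2)$, so that $q_C$ primitively represents both $q_1$ and $q_2$; in particular $\rank q_C \geq 2$. If $\rank q_C = 2$, a primitive representation of a binary form by a binary form is an isometry, so $q_C \sim q_1$ and $q_C \sim q_2$, forcing $q_1 \sim q_2$; this is impossible because $\disc q_1 = -64$ while $\disc q_2 = -432$. There remains the ternary case, where by Theorem \ref{firstmainresultPART2} the Jacobian is isogenous to $E \times E$ with $E$ of complex multiplication. Since $q_C$ primitively represents $q_1 = 4x^2+4y^2$, (\ref{eq: statement_thm_4a_D4}) gives $D_4 \leqslant \Aut(C)$, so $q_C$ is one of the ternary forms of Theorem \ref{[4,0,4]}; it then suffices to show none of them primitively represents $q_2$.

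The heart of the matter, and the step I expect to be the main obstacle, is this last verification, because Theorem \ref{[4,0,4]} supplies an infinite family indexed by a parameter $c$ rather than a single form. I would attempt a uniform local obstruction first. Writing $L$ for the rank-$3$ lattice of $q_C$ and letting $m_i$ be the norm of a generator of the orthogonal complement of the $q_i$-sublattice and $j_i$ its index, the determinant relation gives $16\, m_1 j_1^{-2} = \det L = 108\, m_2 j_2^{-2}$. Combining this with $v_3(\disc q_1)=0$ and $v_3(\disc q_2)=3$ yields $v_3(m_1) \equiv v_3(m_2)+1 \pmod 2$, and I would try to show that the $4x^2+4y^2$-block of the $D_4$ forms cannot satisfy the resulting $3$-adic constraint. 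Should a fully uniform local argument fail, I would fall back on the explicit primitive classification of \cite{refhum} used in Proposition \ref{all_primitiveforms_humbert}, reducing the family to finitely many admissible forms after reduction and checking non-representation of $q_2$ directly. Either route reduces the assertion to a bounded computation and yields $H(q_1) \cap H(q_2) = \varnothing$.
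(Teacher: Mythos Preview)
There is a genuine gap in your treatment of the emptiness. You begin by asserting that ``a point of $H(q)$ for a binary form $q$ is the class of a genus $2$ curve $C$,'' but this is false: by definition $H(q)\subset\mathcal{A}_2(K)$ consists of \emph{all} principally polarized abelian surfaces $(A,\theta)$ with $q_{(A,\theta)}\to q$, and these include the non-Jacobian points, namely those with $q_{(A,\theta)}\to 1$ (cf.\ (\ref{thetaisirreducible})). Your argument via (\ref{eq: statement_thm_4a_D4}) and Theorem~\ref{[4,0,4]} only applies when $q_{(A,\theta)}\sim q_C$, so at best it proves $H^*(q_1)\cap H(q_2)=\varnothing$. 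The paper handles the missing case separately: if $f:=q_{(A,\theta)}$ represents $1$ then $f\sim f_q:=x^2+4q$ for some binary $q$, and since $f\to 4$ and $f\to 9$ one gets $q\to 1$ and $q\to 2$, which pins $q$ down to three reduced forms; each is then excluded by computing the reciprocal $F_{f_q}$ and invoking Dickson's Theorem~27 to see that $f_q$ cannot primitively represent $q_2$ (or $q_1$).

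Even in the Jacobian case your reduction to a bounded check is not in hand. Theorem~\ref{[4,0,4]} gives an infinite family indexed by $c$, and your proposed $3$-adic obstruction is only a sketch with an explicit fallback clause; as written it is not a proof. The paper avoids this difficulty entirely: it uses that $(A,\theta)\in H(q_2)$ forces $(A,\theta)$ into a specific Humbert surface, so by Example~\ref{example_intersection} (i.e.\ Corollary~\ref{intersectioncorollary2} with a small value of $c$) the Jacobian case collapses to the two forms in $Q_5$, and non-representation of $q_2$ by each is again disposed of via the reciprocal form. This ``intersect with a single $H_c$ to kill the parameter'' trick, together with Dickson's criterion, is the mechanism you are missing.

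Finally, your irreducibility arguments are not proofs. For $H(q_1)$ the $D_4$-locus picture is suggestive but does not by itself establish irreducibility as a subscheme of $\mathcal{A}_2$; for $H(q_2)$ you only describe what you would like to do. The paper simply cites the relevant structural results on $H(q)$ for binary $q$ representing a square (Theorems~8.1 and~9.4 of \cite{SubcoversofCurves}, and Proposition~51 of \cite{generalizedhumbert}), which is the intended route here.
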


 The list provided in Theorem \ref{firstmainresultPART2}  has another application concerning the integer $k(q_C) = |\Aut^+(q_C)|/|\Aut(C)|$ as follows:  If $|\Aut(C)|>2,$ where $q_C$ is a ternary form, then $$k(q_C)\leq 2, \text{ with the exception of one class of forms for which }k(q_C)=4,$$ cf.\ Proposition \ref{theoremk(q)} below. Rather than an upper bound, we have actually explicit information for the formula $k(q_C)$ in virtually all cases; cf.\ Table \ref{Tab:k(C)} below and the proof of Proposition  \ref{theoremk(q)} below. 
 
 To obtain the results for the formula $k(q_C),$ we will list the ternary quadratic forms according to the order of their automorphism groups. Section \ref{ternaryquadraticforms} is mostly dedicated to doing that because it is also of independent interest. In a special case of Theorem \ref{propaut6_12_24} below, we have:
 \begin{thm}
     \label{th: special_case_aut12_24}
     Let $q(x,y,z) = ax^2+by^2+cz^2+2ryz+2sxz+2txy$ be an Eisenstein reduced positive ternary quadratic form. Let us put $u=|\Aut^+(q)|.$ If $3\mid u>6$, then
\sps

\noindent\emph{(i)} $u=12\Leftrightarrow a=b=-2t,\ r=s=0 \textrm{ or } a\neq b,\ b=c=-2r,\ s=t=0.$
\spm

\noindent\emph{(ii)} $u=24\Leftrightarrow a=b=c=\kappa r=\kappa s=\kappa t,$ with $\kappa=2,-3$ or $a=b=c,\ r=s=t=0.$
 \end{thm}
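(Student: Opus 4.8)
The plan is to use that, for a positive definite form, $\Aut^+(q)$ is a finite group of lattice-preserving rotations, and to combine the crystallographic restriction with a direct inspection of the Eisenstein reduced normal forms. Write the Gram matrix of $q$ as $A=\left(\begin{smallmatrix} a & t & s\\ t & b & r\\ s & r & c\end{smallmatrix}\right)$, so that $\Aut^+(q)=\{M\in\SL_3(\Z): M^tAM=A\}$. Choosing an orthonormal basis for the (positive definite) form realizes $\Aut^+(q)$ as a finite subgroup of $\operatorname{SO}_3(\mathbb{R})$ preserving the full rank lattice $\Z^3$. By the classification of finite crystallographic rotation groups in dimension three, such a group is cyclic $C_n$ or dihedral $D_n$ with $n\in\{1,2,3,4,6\}$, the tetrahedral group $A_4$ of order $12$, or the octahedral group $S_4$ of order $24$. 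Among these the orders divisible by $3$ are $3,6,12,24$; hence the hypothesis $3\mid u>6$ forces either $u=12$, with $\Aut^+(q)\cong D_6$ or $A_4$, or $u=24$, with $\Aut^+(q)\cong S_4$.

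The first step is to eliminate the tetrahedral possibility. Here I would use that the standard $3$-dimensional representation of $A_4$ is irreducible over $\mathbb{R}$, so by Schur's lemma the space of $A_4$-invariant symmetric bilinear forms is one dimensional; consequently a rank $3$ lattice admitting $A_4$ as a group of proper isometries must be (a scaling of) one of the three cubic Bravais lattices, each of which is preserved by the full octahedral group. Equivalently, $A_4$ is not the rotation part of any lattice holohedry. Thus $A_4\leq\Aut^+(q)$ already forces $S_4\leq\Aut^+(q)$, hence $u=24$. In particular $u=12$ can only be realized with $\Aut^+(q)\cong D_6$, i.e.\ by a hexagonal form.

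For the case $u=12$ I would then analyze the hexagonal forms. A generator of the $6$-fold rotation fixes an axis and acts on the perpendicular plane, so after passing to the orthogonal sublattices the lattice decomposes as the orthogonal sum of a rank one lattice $\langle m\rangle$ along the axis and a planar hexagonal lattice whose Gram matrix is a multiple of $\left(\begin{smallmatrix}1 & -1/2\\ -1/2 & 1\end{smallmatrix}\right)$. Imposing the reduction inequalities $a\le b\le c$ splits this into exactly two non-overlapping normal forms, according to whether the axis vector is longer or shorter than the two equal minimal vectors of the hexagonal plane: when it is longest the hexagonal block sits in the $(x,y)$-variables, giving $a=b=-2t$ and $r=s=0$; when it is shortest the block sits in the $(y,z)$-variables, giving $b=c=-2r$ and $s=t=0$ with $a<b$ (the condition $a\neq b$ simply records the short-axis case and prevents double counting with the first). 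Note that the intermediate arrangement cannot occur, since the two planar minima are equal and so force $a=b$ or $b=c$. Conversely, each listed form manifestly carries the $6$-fold rotation and a reflection, giving $|\Aut^+|=12$, while its minimal-vector configuration is hexagonal rather than cubic, so by the classification the group is no larger.

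Finally, for $u=24$ the form is a cubic lattice, and I would reduce the three cubic Bravais lattices directly. The simple cubic lattice gives $A=aI$, i.e.\ $a=b=c$ and $r=s=t=0$; the face centred lattice, spanned by $(1,1,0),(1,0,1),(0,1,1)$, gives $a=b=c=2$, $r=s=t=1$, i.e.\ $a=b=c=2r=2s=2t$ ($\kappa=2$); and the body centred lattice gives $a=b=c=3$, $r=s=t=-1$, i.e.\ $a=b=c=-3r=-3s=-3t$ ($\kappa=-3$). A direct count of the signed coordinate symmetries confirms $|\Aut^+|=24$ in each case, and the classification shows these are the only reduced cubic forms. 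The main obstacle in all of this is the tetrahedral elimination, together with checking that the Eisenstein conditions single out exactly the stated representatives without overlap or omission; once the minimal-vector configurations are written down, verifying that the listed forms attain the claimed orders and are not accidentally more symmetric is routine.
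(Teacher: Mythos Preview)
Your approach is sound and takes a genuinely different route from the paper. The paper derives this as the special case of a broader result (its Theorem~\ref{propaut6_12_24}, which also treats $u=6$) by a direct case analysis against Dickson's tables of automorphs of reduced ternary forms (\cite{dicksonsbook}, Theorem~105): one splits into $r,s,t>0$ versus $r,s,t\le 0$, locates the lines of Dickson's tables contributing an order-$3$ automorph, and for each resulting coefficient pattern counts exactly which further lines apply. Your argument via the crystallographic classification of finite rotation groups and Bravais lattices is more conceptual and explains \emph{why} the answer splits into a hexagonal family ($u=12$) and the three cubic types ($u=24$), at the price of importing that classification as a black box. Two steps would need sharpening in a full write-up. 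First, Schur's lemma alone does not yield that every $A_4$-invariant lattice is cubic: one-dimensionality of the space of invariant forms pins down the metric but not the integral structure, so you genuinely need the separate crystallographic fact (which you do cite) that $A_4$ is never realized as a lattice holohedry. Second, the orthogonal splitting of a hexagonal lattice into axis and plane is not automatic; it holds here because for the order-$6$ rotation $\rho$ one has $(1+\rho^3)v=2v_{\mathrm{axis}}$ and $(1+\rho^2+\rho^4)v=3v_{\mathrm{axis}}$ in $L$, whence $v_{\mathrm{axis}}\in L$ by B\'ezout, and this deserves a sentence. With those points supplied, the Eisenstein bookkeeping you outline (including the boundary case $a=b=c$, which indeed lands in the first clause of~(i)) goes through.
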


This is the special case of Theorem \ref{propaut6_12_24} below. Moreover, we will discuss the similar results when $3\nmid |\Aut^+(q)|$; cf.\ Theorem \ref{LemAut4_8} below. While we prove these results, the tables  of Theorem 105 of Dickson\cite{dicksonsbook} will be a cornerstone.

Let $R_n(q):=\{(x_1,\dots,x_r)\in\Z^r: q(x_1,\dots,x_r)=n\}$ denote the set of the numbers (or vectors) of representations of an integer $n$ by a quadratic form $q,$ and let $r_n(q)=|R_n(q)|.$

For a positive ternary form $q,$ let 
$$
a(q) \;=\; \max(1, r_4(q), 3r_4(q)-12).
$$
 This quantity is the other cornerstone  in this article because it has a very nice connection with $\Aut(C)$ as follows.
\begin{align}
\label{a(q)_Aut(C)}
      2a(q_C) \;=\; |\Aut(C)|
  \end{align} by equations (3), (26) and (33) of \cite{cas} (cf.\ Remark \ref{remark_C10} below.)

By equation (\ref{a(q)_Aut(C)}), we have that $1/k(q_C) = a(q_C)/|\Aut^+(q_C)|,$ which plays an important role in \cite{cas} because it is a part of the formula of the number of isomorphism classes of (smooth) curves  $C$ of genus 2 lying on a given abelian surface $A.$ 

 Kani computed $a(q_C)$ when $q_C$ is a primitive ternary form (cf.\ Proposition \ref{prop: Corollary_30} below). He mentioned in Remark 31 of \cite{cas} that the computation of $a(q_C)$ is more complicated when $q_C$ is not primitive. Our list makes explicit the formula $a(q_C)$ for any imprimitive ternary form $q_C;$ cf.\ Table \ref{Tab:k(C)}. 

By using the properties of the refined Humbert invariant $q_C$, one can study the elliptic subcovers of degree $n$ of a genus 2 curve $C$; cf.\ (\ref{eq: elliptic_subcover_fact}) below. In Section \ref{s: elliptic_subcovers}, we apply our results to this research area, and we describe the curves $C$ of having an elliptic subcover of degree $n$ for a given $\Aut(C)$ in some cases.
\spm

{\small \noindent\textit{Acknowledgements.}
Part of this project grew out of the PhD thesis of the author. He thanks his supervisor, Prof.\ Ernst Kani for his extensive guidance with particular suggestions on (some parts of) the present article, and sharing of his current (preprint) papers with the author.

The author also wants to thank Eda K{\i}r{\i}ml{\i} for proofreading the present article numerous times and giving many suggestions on it, and  pointing out many errors and typos.}


\section{The refined Humbert invariant}

\label{refinedhumbert}
In this section, we recall the concept of the refined Humbert invariant; introduced by Kani\cite{kani1994elliptic},\cite{MJ}, and recall its significant properties. This is a quadratic form that is intrinsically attached to a principally polarized abelian surface $(A,\theta).$ 

Let $A/K$ be an abelian surface over an algebraically closed field $K$ with $\charec K = 0$. If $\lambda: A\stackrel\sim\rightarrow\hat{A}$ is a principal polarization of $A$, then $\lambda=\phi_{\theta}$ for a (unique) $\theta\in \NS(A)=\Div(A)/\equiv,$ where $\equiv$ denotes numerical equivalence; (see \cite[p.~60]{mumford1970abelian} for the discussion of $\phi$). 

Let $\mathcal{P}(A) \subset \NS(A)$ denote the set of principal polarization on $A$, and let $\theta\in \mathcal{P}(A)$. The quadratic form
$\tilde q_{(A,\theta)}$ of a principally polarized abelian surface $(A,\theta)$ on $\NS(A)$ is defined by the formula
\begin{equation}
\label{eq:qth} 
\tilde q_{(A,\theta)}(D) \;=\; (D.\theta)^2 - 2(D.D),\quad\mbox{for } D\in \NS(A),   
\end{equation}
where $(.)$ denotes the intersection number of divisors. By \cite[p.~200]{kani1994elliptic}, the form $\tilde q_{(A,\theta)}$ induces a positive definite quadratic form $q_{(A,\theta)}$ on the quotient module 
$$
\NS(A,\theta) \;=\; \NS(A)/\Z\theta.
$$  

The quadratic form $q_{(A,\theta)}$ or, more correctly, the quadratic module  $(\NS(A,\theta), q_{(A,\theta)})$ is called the \emph{refined Humbert invariant} of the principally polarized abelian surface $(A,\theta)$; cf.\ \cite{MJ}, \cite{kani2014jacobians}. Let $\rho=\rank(\NS(A))$ be the Picard number of $A$. Thus $q_{(A,\theta)}$ gives rise  to an equivalence class of integral quadratic forms in $\rho-1$ variables since $\NS(A,\theta)\simeq\mathbb{Z}^{\rho-1}.$ Hence, when $\rho=4,$ the (corresponding) refined Humbert invariant is actually a ternary quadratic form.

Recall from \cite{milne1986jacobian} that if $C/K$ is a curve of genus $2$, then its Jacobian $J_C$ is an abelian surface
and there is a divisor $\theta_C$ on $J_C$, called the \textit{theta-divisor} such that $\theta_C$ is a principal polarization in $\NS(J_C)$, and $\theta_C\simeq C$. We then write 
$$
q_C \;:=\; q_{(J_C,\theta_C)}
$$
for its associated refined Humbert invariant.  

One key property of the refined Humbert invariant $q_{(A,\theta)}$ is determining whether $(A,\theta)$ is a Jacobian or not. By Proposition 6 of \cite{MJ}, we have that 
\begin{equation} 
\label{thetaisirreducible}
(A,\theta) \simeq (J_C,\theta_C), \mbox{ for some curve }C/K\; \Leftrightarrow\; 
q_{(A,\theta)}(D)\neq 1, \forall D\in \NS(A,\theta).
 \end{equation}



  When  $A/K$ is a CM (complex multiplication) abelian product surface, i.e., $A\simeq E_1\times E_2$ for some CM elliptic curves $E_1/K, E_2/K$ with $E_1\sim E_2,$ we have that $q:=q_{(A,\theta)}$ is a ternary form. If $(A,\theta) \simeq (J_C,\theta_C)$,  for some curve $C/K$, then $q$ does not represent 1 by equation (\ref{thetaisirreducible}), and hence, we can apply Theorem 3 and Proposition 16 of \cite{cas} to the form $q$. Then we obtain from those results that 
   \begin{align}
   \label{athetadividesAut}
      a(q)  \mid  |\Aut^+(q)|,
  \end{align}
where $\Aut^+(q)=\{\alpha\in\Aut(q): \det(\alpha)=1\}$ (and $a(q)$ is as in the introduction).

The second key property of the refined Humbert invariant $q_C$ is possessing the information about  $\Aut(C)$. The consequence of this property is the key ingredient in this article, and so we now give it as a table explicitly from Theorem 25 of \cite{cas}:

 Let $C/K$ be a curve of genus 2 such that $q_C$ is a binary or a ternary form.  Let $C_n$ denote the cyclic group of order $n.$ Then we have 

 \begin{table}[H]
 {\label{Tab:autC}}
     \centering
      \caption{Automorphism groups of a curve $C$ of genus 2 with $r_4(q_C)$}
     \begin{tabular}{c|c|c}
        $\Aut(C)$ & $a(q_C)$ & $r_4(q_C)$ \\ \hline
 $C_2$ & 1 & 0 \\ 
  $C_{10}$ & 1 & 0 \\ 
  $C_2\times C_2$ & 2 & 2 \\
   $D_4$ & 4 & 4 \\
    $D_6$ & 6 & 6 \\
     $C_3\rtimes D_4$ & 12 & 8 \\ 
 $\GL_2(3)$ & 24 & 12 \\ 
     \end{tabular}
    
     \label{tab:my_label}
 \end{table}

Since this table plays a key role in the present article, we sketch the main ideas of Kani's proof. The first column of this table gives the complete list of possibilities for $\Aut(C)$, for any genus 2 curves $C/K$; cf.\ Theorem 2 of \cite{shaska2004elliptic}. To verify the third column, if $i(\Aut(C))$ denotes the number of involutions of the group $\Aut(C)$, then we have from Theorem 20 of \cite{ESCI} and Proposition of \cite{ESCII} that 
$$
i(\Aut(C)) \;-\; 1 \;=\; r_4(q_C)
$$
(cf.\ Proposition 24 of \cite{cas}). Hence, one can easily verify the third column by the group theory, and the second column follows from the third column immediately.

  \begin{remark} 
  \label{remark_C10}
  If a refined Humbert invariant $q_{(A,\theta)}$ of a principally polarized abelian surface $(A,\theta)$ is a ternary form, then $A\simeq E_1\times E_2,$ for CM elliptic curves $E_1, E_2$. To see this, first remember that $\rank(\NS(A))=4$ in this case. By the structure theorems for $\End(A)$ (cf.\ Proposition IX.1.2 of \cite{van2012hilbert}), it follows that  $A\sim E\times E,$  for some CM elliptic curve $E.$ By Shioda and Mitani's Theorem\cite{MitaniShioda} (or by Theorem 2 of \cite{kani2011products}), it follows that $A$ is a CM abelian product surface.
  
  Therefore, if $q_C$ is a ternary form for some curve $C$ of genus 2, then $\Aut(C)$ cannot be isomorphic to $C_{10}$ by 
   the fact that $J_C$ is simple when $\Aut(C) \simeq C_{10}$; cf.\ \cite[p.~648]{Igusa}.
  Thus, the second line of Table \ref{Tab:autC} cannot occur in the list of the ternary forms $q_C,$ for a given automorphism group $\Aut(C).$ Thus, we are just left with different numbers $a(q_C)$ (and $r_4(q_C)$) for each case in the table,  and so  we have that \begin{align*}
      a(q_C)\; = \; \frac{1}{2}|\Aut(C)|,
  \end{align*} in this case as stated in (\ref{a(q)_Aut(C)}).
  \end{remark}

\section{Ternary quadratic forms} \label{ternaryquadraticforms}

Since the form $q_C$ encodes the information of $\Aut(C)$ (cf.\ Table \ref{Tab:autC}),  our aim is to list the ternary quadratic forms $q_C$ with given $\Aut(C)$. To this end, since $a(q_C)\mid|\Aut^+(q_C)|$ by equation (\ref{athetadividesAut}), we first determine the ternary quadratic forms $q$ according to the order of the group $\Aut^+(q)$ in this section.

For simplicity, we use the abbreviation 
$$
[a,b,c,r,s,t] \;:=\;  ax^2+by^2+cz^2+ryz+sxz+txy \;=\; q(x,y,z)
$$
to denote an \textit{integral ternary quadratic form} $q(x,y,z)$ with $a,b,c,r,s,t\in\Z.$ Note that if all non-diagonal coefficients, namely $r,s,t,$ are even, then $q$ is also an \textit{integral ternary quadratic form in Dickson's sense}\cite{dicksonsbook}.

The properties of an \textit{Eisenstein reduced} form are often used below, so let us recall it here. Firstly, by equivalent ternary forms $q_1$ and $q_2,$ we mean $\GL_3(\Z)$-equivalence, and we denote it by $q_1\sim q_2;$ cf.\ \cite[p.~5]{watson1960integral}. By Theorem 103 of \cite{dicksonsbook}, every positive ternary quadratic form in the sense of Dickson  is equivalent to a form  $q(x,y,z)=ax^2+by^2+cz^2+2ryz+2sxz+2txy$ having the following conditions, where $\epsilon=a+b+2r+2s+2t:$
\begin{align}
\label{condition1}
    &r,\ s,\ t \ \textrm{ are all positive or all are non-negative.}\\
    \label{condition2}
    &a \;\leq\; b \;\leq\; c,\ \epsilon \;\geq\; 0.\\
    \label{condition3}
    &a \;\geq\; |2s|,\ a \;\geq \; |2t|,\ b \;\geq\; |2r|.\\
    \label{condition4}
     &\textrm{ If } a \;=\; b,\ |r| \;\leq\; |s|; \textrm{ if } b\;=\;c,\ |s| \;\leq\; |t|; \textrm{ If }\epsilon \;=\;0,\ a \;+\; 2s \;+ \;t \;\leq\; 0.\\
     \label{condition5}
    & \textrm{ If }a \;=\;-2t,\ s \;=\;0; \textrm{ if }a\;=\;-2s,\ t\;=\;0; \textrm{ if }b \;=\;-2r,\ t\;=\;0.\\
    \label{condition6}
    & \textrm{ if }a\;=\;2t,\ s\;\leq\; 2r; \textrm{ if }a\;=\;2s,\ t\;\leq\; 2r; \textrm{ if }b\;=\;2r,\ t\;\leq\; 2s.
\end{align}

Note that the condition (\ref{condition5}) is valid for $r,s,t\leq 0,$ and the condition (\ref{condition6}) is valid for $r,s,t>0.$ The main point of introducing the definition of a reduced form is the remarkable fact that no two reduced forms are equivalent by Theorem 104 of \cite{dicksonsbook}.

We list the ternary forms $q$ according to the order $|\Aut^+(q)|$ as was mentioned in the introduction. For this, we use the tables of Theorem 105 of Dickson\cite{dicksonsbook}.  Here, he listed the existence of the automorphs of a positive reduced ternary form according to the relations of the coefficients. By calculating the orders of the elements in those tables,  we can see that the automorphs listed in lines $1\textrm{\textendash}7$ of the table on \cite[p.~179]{dicksonsbook} all have order 2, and that those in line 8 all have order 4. Also, the lines  $9\textrm{\textendash}10$ of the table contain the automorphs of order 3 and 2. Similarly, we can see that the automorphs listed in lines $1\textrm{\textendash}7$ of the table on  \cite[p.~180]{dicksonsbook} all have order 2, and that those in lines 8 and 14 all have order 4. Also, the line 15 of the table contains the automorphs of order 4 and 2. In this table, all automorphs of order 3 occur in the lines $9\textrm{\textendash}12$ and 16. These computational results will be frequently used below.

We now prove  the first result related to the discussion above. It basically lists the positive ternary quadratic forms $q$ when  $3\mid |\Aut^+(q)|$, and so it proves Theorem \ref{th: special_case_aut12_24}.

\begin{thm} 
\label{propaut6_12_24}
Let $q(x,y,z) = ax^2+by^2+cz^2+2ryz+2sxz+2txy$ be an Eisenstein reduced positive ternary quadratic form. Let us put $u=|\Aut^+(q)|.$ If $3\mid u,$ then  we have that
\sps

\emph{(i)} $u=6\Leftrightarrow a=b=c,\ r=s=t\neq 0\textrm{ and } r>0\Rightarrow  a\neq 2r \textrm{ and } $  $r<0\Rightarrow a\neq-3s,\ a\neq -2r,$ or $a=b=2r=2s=2t,\ b\neq c,$  or $b=c,\ s=t,\ 2r+s=-b,\ a=-3s,\ a\neq b.$ 
\spm

\emph{(ii)} $u=12\Leftrightarrow a=b=-2t,\ r=s=0 \textrm{ or } a\neq b,\ b=c=-2r,\ s=t=0.$
\spm

\emph{(iii)} $u=24\Leftrightarrow a=b=c=\kappa r=\kappa s=\kappa t,$ where $\kappa=2,-3$ or $a=b=c,\ r=s=t=0.$

\end{thm}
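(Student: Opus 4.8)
The plan is to reduce the classification of forms with $3 \mid u = |\Aut^+(q)|$ to a finite case analysis driven by Dickson's tables (Theorem 105 of \cite{dicksonsbook}), exploiting the fact noted in the preceding paragraph that every automorph of order $3$ for a reduced form occurs only in lines $9\text{\textendash}10$ of the table on \cite[p.~179]{dicksonsbook} and lines $9\text{\textendash}12,16$ of the table on \cite[p.~180]{dicksonsbook}. Since an element of order $3$ generates a subgroup of $\Aut^+(q)$, the hypothesis $3 \mid u$ forces $q$ (in Eisenstein reduced form) to satisfy the coefficient relations defining at least one of those specific lines. First I would enumerate exactly which coefficient patterns $(a,b,c,r,s,t)$ make the order-$3$ automorphs of those lines into genuine automorphisms of $q$; each line of Dickson imposes equalities and sign constraints among $a,b,c,r,s,t$ (for instance forcing diagonal coefficients to coincide or an off-diagonal entry to equal $-2$ times a diagonal one), and these are precisely the relations appearing on the right-hand sides of (i), (ii), (iii).

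Having pinned down the candidate families, the next step is to compute $u = |\Aut^+(q)|$ exactly for each family rather than just detecting divisibility by $3$. Here I would combine the order-$3$ automorphs with the order-$2$ and order-$4$ automorphs catalogued in lines $1\text{\textendash}8$ of the same tables (whose orders were already tabulated in the paragraph preceding the theorem), and determine the group they generate. The group $\Aut^+(q)$ is a finite subgroup of $\SL_3(\Z)$, so its order is small; the possible values $6, 12, 24$ correspond to the crystallographic/point-group structures $C_6$ or $S_3$, a group of order $12$, and the order-$24$ group $\SL_2(3) \cong 2.A_4$ acting on the cubic lattices. I would match each coefficient family to its automorphism order by checking how many of the listed automorphs are simultaneously compatible with the defining relations, using the reducedness conditions (\ref{condition1})\text{\textendash}(\ref{condition6}) to eliminate redundant or non-reduced representatives and to ensure no two distinct reduced forms in my list coincide. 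The splitting into $u = 6$ versus $u = 12$ versus $u = 24$ is then governed by which further equalities (e.g.\ $a = b = c$ versus merely $b = c$, or $a = \kappa r$ with $\kappa = 2, -3$) hold.

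The forward direction ($u = 6, 12, 24 \Rightarrow$ the listed relations) and the reverse direction (each listed family has the stated order) are handled in tandem through this correspondence: the forward direction follows because any form with $3 \mid u$ must, after Eisenstein reduction, be one of the finitely many patterns, and the reverse follows by direct computation of the generated automorphism group for each pattern. Special attention is needed for the boundary subcases in (i) \textemdash\ the clauses $r > 0 \Rightarrow a \neq 2r$ and $r < 0 \Rightarrow a \neq -3s,\ a \neq -2r$ \textemdash\ which are exactly the degeneracies where an order-$3$ family acquires extra automorphs and jumps to $u = 12$ or $u = 24$; these must be excluded from the $u = 6$ list and transferred to (ii) or (iii). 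I expect the main obstacle to be precisely this bookkeeping: correctly partitioning the order-$3$ lines of Dickson's tables into the three orders while tracking the sign conventions ($r,s,t \leq 0$ versus $r,s,t > 0$ in conditions (\ref{condition5}),(\ref{condition6})) and verifying that the boundary cases are assigned to the correct order, rather than any single hard computation. Once the table-to-relation dictionary is fixed, each individual order computation is a routine (if tedious) check of which automorphs stabilize the given coefficient pattern.
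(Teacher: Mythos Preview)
Your proposal is correct and follows essentially the same approach as the paper: the paper splits into Case~1 ($r,s,t>0$, using the table on \cite[p.~179]{dicksonsbook}) and Case~2 ($r,s,t\leq 0$, using the table on \cite[p.~180]{dicksonsbook}), identifies the lines carrying order-$3$ automorphs exactly as you describe, and then walks through each resulting coefficient pattern counting the total number of automorphs from Dickson's tables (including the extra automorphs supplied by the footnote on \cite[p.~180]{dicksonsbook} when some of $r,s,t$ vanish). The boundary exclusions in part~(i) are handled precisely as you anticipate, by noting that when they fail the form falls into a higher-symmetry subcase already assigned to $u=12$ or $u=24$.
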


\begin{proof} Assume that $3\mid u.$ Thus,  $\Aut^+(q)$ has an element of order 3. To verify the one direction ($\Rightarrow$) for all assertions we find all reduced forms $q$ satisfying the hypothesis by using  the tables of Dickson\cite{dicksonsbook}, on pages 179 and 180. The other direction ($\Leftarrow$) can be easily derived from the same tables.  
\spm

\noindent{\textbf{Case 1.}} Suppose first that $r,s,t>0.$ 
\sps

\noindent Note that  since all the automorphs of order 3 occur in lines $9\textrm{\textendash}11$ of the table on \cite[p.~179]{dicksonsbook} by what was discussed above, it follows that we have three subcases as follows: $a=b=2r=2s=2t$ or $a=b=c,\ r=s=t$ or $a=b=c=2r=2s=2t.$  We can directly conclude by \cite[p.~180]{dicksonsbook} that $u=24$ if the last subcase, i.e., the case of line 11, holds. Thus, we can eliminate this subcase from now on.

Observe that when $a=b=c,\ r=s=t,$ and $a=2r,$ we see that the case of line 11 holds, which was excluded above. Hence, we assume that $a\neq 2r$ in Subcase 1 below.
\spm

\noindent\textit{Subcase} 1. Suppose that $a=b=c,\ r=s=t,\ a\neq 2r.$
\sps

\noindent We see that the cases of lines $2\textrm{\textendash}4$ and 8 are not possible. Moreover, we see that the cases of lines 1, 7, 9 and 11 do not hold since $a\neq 2r.$  We are thus left with the cases of lines 5, 6 and 10, and so $u=6$ since there is a total of 5 automorphms in those lines except the identity.
 
As in Subcase 1, we assume that $b\neq c$ in Subcase 2 below since the case of line 11 would hold, otherwise.
\spm
 
\noindent\textit{Subcase} 2. Suppose that $a=b=2r=2s=2t,\ b\neq c.$
 \sps
 
\noindent We immediately  see that the cases of lines $2\textrm{\textendash}4$, $6\textrm{\textendash}8$ and $10\textrm{\textendash}11$ are not possible. Hence, this implies that we are just left with the cases of lines  1, 5 and 9, which again shows that  $u=6$ by the same reason in Subcase 1.

Therefore, we have verified the one direction  ($\Rightarrow$) of the assertions (i)-(iii) in Case 1 because we have determined all the reduced forms satisfying the above conditions. Moreover, the other direction ($\Leftarrow$) also follows in this case because we have found $u$ for all possible forms in this case.
 \spm
 
 \noindent\textbf{Case 2.} Assume that $r,s,t\leq0.$
\sps

Note that  since all the automorphs of order 3 occur in lines $9\textrm{\textendash}12$ and 16 of the table on \cite[p.~180]{dicksonsbook} by what was discussed above, it follows that  we have five subcases to analyze. We can readily conclude by \cite[p.~180]{dicksonsbook} that $u=24$ if the case of the line 16 holds, i.e., $a=b=c=-3s,\ r=s=t.$ Thus, we can eliminate this subcase from now on.
 
 Before starting to analyze the remaining four subcases, note that if $r=s=t=0$ and $3\mid u,$ then $a=b=c$ by the assertion of the footnote on p.\ 180 of \cite{dicksonsbook}. Moreover, we have from by \cite{dicksonsbook}, loc. cit., that if $a=b=c$ and $r=s=t=0,$ then $u=24.$ From now on, we assume that at least one of the values $r,s,t$ is not zero.

We clearly see that if the case of line 12 holds, i.e., $a=b=c,\ r=s=t,$ and if $a=-3s,$ then the case of line 16 holds, which was excluded. Hence, we suppose that $a\neq-3s$ in the case that line 12 holds. Moreover, since $q$ is reduced, we see that $a\neq -2r$ by the property (\ref{condition5}) in this case, and we suppose this condition as well.
\spm

\noindent \textit{Subcase} 1. Assume that $a=b=c,\ r=s=t,\ a\neq -3s,\ a\neq -2r.$
\sps

 Since $a\neq -3s,$ we see that $a+2s+t\neq0,$ and thus the cases of lines 4 and 11 are not possible. Moreover, it is easy to see that if $a+b+2r+2s+2t=0,$ then $a=-3s,$ which is not possible, and thus we can discard the cases of lines $7$ and $13\textrm{\textendash}15.$ 
 
 Note that $r\neq0;$ otherwise, $r=s=t=0,$ which is not possible as was mentioned above. Hence, we are left with the cases of lines $5\textrm{\textendash}6$ and 12. Thus, it follows that $u=6$ since there are totally 5 automorphms in those lines except the identity.
 
 Since we analyzed the case of line 12, we may eliminate this case for further analysis. Also, we observe that if the case of line 11 holds, i.e., $b=c,\ s=t,\ b+2r+s=0,\ a=-3s$ and if $a=b,$ then $r=s,$ and so the case of line 16 holds, which was excluded. Hence, we  suppose that $a\neq b$ in the case that line 11 holds as follows.
 \spm
 
 \noindent\textit{Subcase} 2. Assume that $b=c,\ s=t,\ b+2r+s=0,\ a=-3s, \ a\neq b.$
\sps

\noindent We have that $s\neq 0 \textrm{ and }t\neq 0,$ so we can discard the cases of lines $1\textrm{\textendash}3$ and $8\textrm{\textendash}10.$ Since $a\neq b,$ we can eliminate the cases of lines 5, 7 and $13\textrm{\textendash}15.$ We are thus left with the cases of lines 4, 6 and 11, and this implies that $u=6$ by the same reason in Subcase 1.
 
 We thus eliminate the case of line 11 for the rest of the proof.
\spm

\noindent\textit{Subcase} 3. Assume that  $a=b=-2t,\ r=s=0.$
\sps

\noindent We first see that the cases of lines 1, 3, 8 and 10 are not possible since $r=s=0.$ Moreover, we observe that $a+b+2r+2s+2t\neq 0$ when $r=s=0$ and $a=-2t.$ Thus, we can eliminate the cases of lines 7 and $13\textrm{\textendash}15.$ Recall from above that we assumed at least one of the values $r,s,t$ is nonzero, and so $t\neq0,$ and thus the case of line 6 is not possible. We clearly see that $a+2s+t\neq 0$ in this subcase, and so we can discard the case of line 4. Since we excluded the cases of lines $11\textrm{\textendash}12$ and 16, we are thus left with the cases of lines 2, 5 and 9 of the table, and they give six automorphs including the identity. By the footnote of \cite[p.~180]{dicksonsbook}, there are precisely six more automorphs in our case, and hence  $u=12$ in this subcase.

In the following subcase, we analyze the case of the line 10, i.e., $b=c=-2r,\ s=t=0.$  Since $q$ is reduced, we can assume that $a\neq b$ by the property (\ref{condition4}).
\spm

\noindent\textit{Subcase} 4. Assume that $b=c=-2r,\ s=t=0,\ a\neq b.$ 
\sps

 As in Subcase 3,  we similarly see that we are just left with the cases of lines 3, 6 and 10, and they give six automorphs including the identity. As in Subcase 3, there are precisely six more automorphs by the footnote of \cite[p.~180]{dicksonsbook}, and hence  $u=12.$

 Therefore, we have verified the one direction ($\Leftarrow$) of all assertions (i)-(iii) by the Subcases. Moreover, since we analyzed all the possible reduced forms $q$'s such that $3\mid u$ in Cases 1 and 2, the other direction follows from the results proven in the Subcases. 
\end{proof}

Let $\cont(q)$ denote the greatest common divisor of the coefficients of an integral quadratic form $q.$ When $A$ is a CM (abelian) product surface, we have that 
\begin{equation}
\label{content1or4}
 \textrm{either }   \cont(q_{(A,\theta)}) \;=\; 1 \textrm{ or } \cont(q_{(A,\theta)}) \;=\; 4, \textrm{ for any } \theta\in\mathcal{P}(A);
\end{equation}
by Theorem 20 of \cite{kani2014jacobians} and Proposition 19 of \cite{Kir} (cf.\ Proposition 15(ii) of \cite{Kir}).
Moreover, if $q\sim q_C,$ for some curve $C$ of genus 2, we have from \cite[p.~24]{MJ} that 
\begin{equation}
\label{0,1MOD4}
    q(x,y,z) \;\equiv\; 0,1\Mod{4}, \textrm{ for all }x,y,z\in\Z.
\end{equation}

Therefore, it suffices to determine $r_4(q)$, for ternary  forms $q$ in the case that $q$ satisfies the conditions in equations (\ref{content1or4}) and (\ref{0,1MOD4}). It is useful to know that it is done  in the primitive case, i.e., when $\cont(q)=1$ by E. Kani:

\begin{prop}
    \label{prop: Corollary_30}
 Let $q_C$ be a primitive ternary form. Then $q_C \sim q$, where $q$ is Eisenstein reduced. Moreover, if we write $q(x, y, z)=a x^2+b y^2+c z^2+2 r y z+2 s x z+2 t x y$, then we have that
$$
(a(q_C), r_4(q_C))  \;=\; \begin{cases} (1,0) & \text { if } a \neq 4 \\ (2,2) & \text { if } a=4 \text { and } b \neq a \\ (6,6) & \text { if } a=b=4 \text { and }(r, s, t)=(2,2,2) \text { or }(0,0,-2) \\ (4,4) & \text { otherwise. }\end{cases}
$$
\end{prop}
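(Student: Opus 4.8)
\emph{Proof plan.} The plan is to replace $q_C$ by a $\GL_3(\Z)$-equivalent Eisenstein reduced form $q$, written as $q(x,y,z)=ax^2+by^2+cz^2+2ryz+2sxz+2txy$ (Theorem 103 of \cite{dicksonsbook}); since both $a(\cdot)$ and $r_4(\cdot)$ are invariant under equivalence, it suffices to compute $r_4(q)$ for the reduced representative, because the identity $a(q)=\max(1,r_4(q),3r_4(q)-12)$ then yields $a(q_C)$ directly (indeed $r_4=0,2,4,6$ force $a(q)=1,2,4,6$ respectively). The two global facts I shall use throughout are: (i) since $q_C\equiv 0,1\Mod{4}$ by \eqref{0,1MOD4} and $q_C$ does not represent $1$ by \eqref{thetaisirreducible}, the least value $a=q(1,0,0)$ satisfies $a\geq1$, $a\neq1$ and $a\equiv0,1\Mod{4}$, hence $a\geq4$; and (ii) for an Eisenstein reduced form the diagonal coefficients $a\leq b\leq c$ are its successive minima, so $q(x,y,z)\geq b$ whenever $(y,z)\neq(0,0)$ and $q(x,y,z)\geq c$ whenever $z\neq0$.

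If $a\neq4$, then $a\geq5$ by (i), so the minimum $a$ of $q$ exceeds $4$ and $q$ cannot represent $4$; thus $r_4=0$ and $a(q_C)=1$, giving the first line. Assume now $a=4$; then $\pm(1,0,0)$ already give $r_4\geq2$. If moreover $b\neq4$, i.e.\ $b\geq5$, then by (ii) every vector outside $\Z(1,0,0)$ has $q\geq b>4$, so the only representations of $4$ are $\pm(1,0,0)$ and $r_4=2$, $a(q_C)=2$; this is the second line.

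There remains the crux $a=b=4$. First I would pin down the shape of $q$: evaluating the congruence \eqref{0,1MOD4} at $(1,1,0),(1,0,1),(0,1,1)$ forces $r,s,t$ all even, while the reduction inequalities \eqref{condition3} give $|2s|,|2t|\leq a=4$ and $|2r|\leq b=4$; hence $r,s,t\in\{0,\pm2\}$. Since then $2r,2s,2t\in\{0,\pm4\}$, primitivity forces $\gcd(4,c)=\cont(q)=1$, so $c$ is odd and therefore $c\geq5>4$. By (ii) every vector with $z\neq0$ has $q\geq c>4$, so all representations of $4$ lie in the plane $z=0$, and
$$
r_4(q)\;=\;\#\{(x,y)\in\Z^2:4x^2+2txy+4y^2=4\}\;=\;\#\{(x,y):x^2+\tfrac{t}{2}xy+y^2=1\},
$$
which equals $4$ when $t=0$ and $6$ when $t=\pm2$ (the hexagonal case).

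Finally I would determine which reduced forms with $a=b=4$ have $t=\pm2$. Using the sign normalization \eqref{condition1} (the nonzero ones among $r,s,t$ share a sign), the inequality $|r|\leq|s|$ coming from $a=b$ in \eqref{condition4}, and the boundary conditions \eqref{condition5}--\eqref{condition6} (in particular $a=-2t\Rightarrow s=0$ and $a=2t\Rightarrow s\leq2r$), a short enumeration over $r,s,t\in\{0,\pm2\}$ shows that the only reduced forms with $t=\pm2$ are $(r,s,t)=(2,2,2)$ and $(0,0,-2)$, while the competitors such as $(0,0,2)$, $(0,-2,-2)$, $(-2,-2,-2)$ each fail one of these conditions; all remaining reduced forms with $a=b=4$ have $t=0$. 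Hence $r_4=6$ and $a(q_C)=6$ exactly for $(r,s,t)=(2,2,2)$ or $(0,0,-2)$, and $r_4=4$ and $a(q_C)=4$ in every other case with $a=b=4$, completing the last two lines. The main obstacle is precisely this last enumeration: it is entirely elementary but requires careful bookkeeping of Dickson's boundary reduction conditions to discard the spurious $t=\pm2$ candidates, and it rests on fact (ii), the successive-minima property, which is what reduces every representation count to the plane $z=0$.
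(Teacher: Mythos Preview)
Your argument is correct and substantially more detailed than the paper's own proof, which consists of a single sentence citing Corollary~30 of \cite{cas} together with Table~\ref{Tab:autC}. Your approach---reducing to an Eisenstein form, using the congruence \eqref{0,1MOD4} and irreducibility \eqref{thetaisirreducible} to pin down the shape, and then enumerating the boundary cases of Dickson's reduction conditions---is exactly the kind of direct argument that presumably lies behind the cited result, and it has the advantage of being self-contained within the present paper's framework.

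Two small points. First, at the very start you invoke Theorem~103 of \cite{dicksonsbook} to pass to an Eisenstein reduced form, but that theorem applies only to forms in Dickson's sense (even cross-coefficients). You should note that this is forced by \eqref{0,1MOD4}: writing $q_C=Ax^2+By^2+Cz^2+Ryz+Sxz+Txy$, the difference $q_C(1,1,0)-q_C(1,-1,0)=2T$ must lie in $\{0,\pm1\}\pmod4$ and is even, hence $T$ is even, and similarly for $R,S$. This is essentially the first assertion of the proposition, and the paper later relies on \emph{this} proposition for exactly this fact (see the proof of Proposition~\ref{classificationD4}). Second, your fact~(ii), that $q(x,y,z)\geq c$ whenever $z\neq0$ for an Eisenstein reduced form, is slightly stronger than the bare successive-minima statement; it is indeed part of Dickson's Theorem~101, but it would be cleaner to cite it explicitly rather than fold it into ``successive minima''. (Alternatively, since your enumeration shows that the only reduced forms with $a=b=4$ are the five tuples $(r,s,t)\in\{(2,2,2),(0,0,-2),(0,0,0),(0,-2,0),(-2,-2,0)\}$, one can verify $z=0$ directly for each, as the paper does in Lemma~\ref{r4(q)}.)
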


\begin{proof}
    This follows from Corollary 30 of \cite{cas} and Table \ref{Tab:autC}.
\end{proof}

Thus, we want to determine $r_4(q)$, for imprimitive ternary forms satisfying the conditions in (\ref{content1or4}) and (\ref{0,1MOD4}). However, the situation is more complicated in this case, and so it needs considerable work.
\spm

\noindent\textbf{Notation.} Let $q_{a,b,c}$ denote the diagonal ternary quadratic form $ax^2+by^2+cz^2.$ Let us put 
\begin{align*}
    q_{1,c}=&[4,4,c,4,4,4],\quad q_{2,c}=[4,4,c,0,0-4],\quad q_{3,c}=[4,4,c,-4,-4,0],\\
    q_{4,c}=&[4,c,c,-4,0,0],\quad q_{5,c}=[4,c,c,-c,0,0],\quad q_{6,c}=[4,4,c,0,-4,0].
\end{align*} 
It is straightforward to check that all the ternary forms $q_{i,c}$ are (Eisenstein) reduced forms, for $i=1,2$ provided  $c\geq4.$ Similarly, it is a simple calculation to see that all the ternary forms $q_{i,c}$ are reduced forms, for $3\leq i\leq 6$ provided  $c>4.$ 

If we apply Theorem \ref{propaut6_12_24} to the ternary quadratic forms having given properties, we obtain that:
\begin{cor}
\label{Aut6_12_24forimprimitiveform}
Let $q$ be a positive ternary quadratic form such that $\cont(q)=4$ and $r_4(q)>0.$ Let $u=|\Aut^+(q)|$.
\sps

\emph{(i)} If $u=24,$ then $q\sim q_{1,4}$ or $q\sim q_{4,4,4}.$
\spm

\emph{(ii)} If $u=12,$ then  $q\sim q_{2,c}$ or $q\sim q_{5,c},$ with $4\mid c>4.$ 
\spm

\emph{(iii)} If $u=6,$ then $q\sim q_{1,c},$ with $4\mid c>4.$ 
\end{cor}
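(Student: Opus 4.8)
The plan is to reduce everything to a direct application of Theorem~\ref{propaut6_12_24}. Since $|\Aut^+(q)|$, $\cont(q)$ and $r_4(q)$ are all $\GL_3(\Z)$-invariants, and since $\cont(q)=4$ makes the Gram matrix of $q$ integral (so that $q$ is a form in Dickson's sense), I may assume by Theorems~103 and 104 of \cite{dicksonsbook} that $q=ax^2+by^2+cz^2+2ryz+2sxz+2txy$ is Eisenstein reduced. Spelling out $\cont(q)=4$ gives $4\mid a,b,c$ together with $r,s,t$ even, and in particular $q(\Z^3)\subset 4\Z$. First I would fix the leading coefficient: for a reduced form $a$ is the least nonzero value of $q$, so $r_4(q)>0$ forces $a\le 4$ while $q(\Z^3)\subset 4\Z$ forces $a\ge 4$; hence $a=4$.

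With $a=4$ and $u\in\{6,12,24\}$ (so $3\mid u$), I would then read the classification off Theorem~\ref{propaut6_12_24}, in each case intersecting its relations with the parity constraints $4\mid b,c$ and $2\mid r,s,t$. For $u=24$ the options are $a=b=c=\kappa r=\kappa s=\kappa t$ with $\kappa\in\{2,-3\}$, or $a=b=c$, $r=s=t=0$; the value $\kappa=-3$ is impossible since it would give $r=s=t=-\tfrac43\notin\Z$, leaving $r=s=t=2$ (i.e.\ $q_{1,4}$) and $r=s=t=0$ (i.e.\ $q_{4,4,4}$), which is part (i). For $u=12$ the first alternative $a=b=-2t$, $r=s=0$ forces $t=-2$ with $c$ an arbitrary admissible value, giving $q=q_{2,c}$, and the second alternative $a\ne b$, $b=c=-2r$, $s=t=0$ forces $r=-c/2$ with $c=b>4$, giving $q=q_{5,c}$; together these are part (ii). For $u=6$ I would use the three sub-alternatives of Theorem~\ref{propaut6_12_24}(i): the diagonal one $a=b=c$, $r=s=t\ne0$ is killed because the reduction inequality $b\ge|2r|$ forces $|r|\le2$, hence $r=\pm2$, and both are excluded by the accompanying conditions $a\ne 2r$ (for $r>0$) and $a\ne -2r$ (for $r<0$); the alternative $a=-3s$ fails integrality as above; and the alternative $a=b=2r=2s=2t$, $b\ne c$ gives $r=s=t=2$ and $q=q_{1,c}$ with $c>4$, which is part (iii).

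The admissible ranges of $c$ and the absence of overlap between the families would then follow from the reduction theory recorded before the statement: $q_{1,c}$ and $q_{2,c}$ are reduced for $c\ge4$, while $q_{5,c}$ is reduced only for $c>4$ (at $c=4$ it violates the tie-breaking rule in (\ref{condition4}) and reduces instead to $q_{2,4}$). The delicate points sit at the endpoint $c=4$. For the family $q_{1,c}$ the member $q_{1,4}$ acquires the extra automorphs of the $\kappa=2$ case and so moves from the $u=6$ list into the $u=24$ list, which is exactly why part (iii) carries $c>4$; in the same way one confirms that $q_{1,4}$ and $q_{4,4,4}$ are precisely the two $u=24$ forms. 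For the family $q_{2,c}$ one must separately check the status of the single class $q_{2,4}$ (equivalently $q_{5,4}$), reading off from Dickson's tables which value of $u$ it realizes before assigning it to a list. The converse direction is then immediate: each listed $q_{i,c}$ visibly satisfies the corresponding relation of Theorem~\ref{propaut6_12_24}, hence has the asserted $u$, exactly as in the ``$(\Leftarrow)$'' half of that theorem.

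I expect the difficulty to be organizational rather than conceptual. The two things that require genuine care are, first, translating consistently between the bracket notation $[a,b,c,r,s,t]$ and Dickson's $2r,2s,2t$ convention while carrying the content-$4$ parity through every sub-alternative, so that the non-integral branches ($\kappa=-3$ and $a=-3s$) are discarded cleanly; and second, handling the boundary values of $c$, where for each family one must determine which reduced form is obtained and whether the extra symmetry at $c=4$ moves a form between the three lists.
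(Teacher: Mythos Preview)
Your argument is correct and follows essentially the same route as the paper: reduce to an Eisenstein reduced form, use $\min(q)=a$ together with $\cont(q)=4$ and $r_4(q)>0$ to pin down $a=4$, and then filter the alternatives of Theorem~\ref{propaut6_12_24} through the parity constraints coming from $\cont(q)=4$. Your elimination of the diagonal branch in the $u=6$ case (via $|r|\le 2$, $2\mid r$, $r\ne 0$, then $a\ne\pm 2r$) is exactly the content of the paper's one-line remark that ``since $\cont(q)=4$ and $4=b\ge|2r|$, this case is not possible,'' just spelled out more fully.

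Two minor points. First, your hedge about $q_{2,4}$ is unnecessary: Theorem~\ref{propaut6_12_24}(ii) applies verbatim (with $a=b=4=-2t$, $r=s=0$), so $q_{2,4}$ has $u=12$ and belongs to part~(ii); the paper's own proof in fact records $q_{2,c}$ with $4\mid c$ (no strict inequality) for this branch. Second, the corollary is a one-way implication, so there is no converse to check here.
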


\begin{proof} Since $\cont(q)=4,$ it follows that $q$ is an integral quadratic form in the sense of Dickson, and by replacing $q$ by an equivalent form, we may assume that $q$ is an Eisenstein
reduced form; cf.\ \cite{dicksonsbook}, Theorem 103. Let us put $q=ax^2+by^2+cz^2+2ryz+2sxz+2txy.$ Since $\min(q)=a$ by Theorem 101 of \cite{dicksonsbook}, and since $\cont(q)=4$ and $r_4(q)>0,$ it follows that $a=4.$

To prove the assertion (i), assume that $u=24.$ We thus have by Theorem \ref{propaut6_12_24} that $b=c=2r=2s=2t=4$ or  $b=c=4,\ r=s=t=0,$ which proves that $q= q_{1,4}$ or $q= q_{4,4,4}.$

 Now suppose that $u=12.$ By Theorem \ref{propaut6_12_24}, we have that $b=-2t=4,\ r=s=0,$ which implies that $q= q_{2,c},$ with $4\mid c$  or that $b=c=-2r,\ s=t=0,\ b\neq 4,$ which implies that $q= q_{5,c},$ with $4\mid c>4$ (since $\cont(q)=4$). Thus, this proves the second assertion.

   Lastly, assume that $u=6.$ By Theorem \ref{propaut6_12_24} again,  since $4\neq -3s,$ we have only two possible cases that $4=b=c,\ r=s=t\neq 0\textrm{ and } r>0\Rightarrow4\neq 2r \textrm{ or }r<0\Rightarrow4\neq-3s\textrm{ and } 4\neq -2r,$ or $4=b=2r=2s=2t\textrm{ and } b\neq c.$ Suppose that the former case holds. Since $\cont(q)=4$ and  since $4=b\geq|2r|$ by the property (\ref{condition3}), this case is not possible. We next suppose that the latter case holds, and thus we obtain that $4=b=2r=2s=2t$ and $c\neq 4,$ and thus $q=q_{1,c},$ for $4\mid c\geq 8$ since $\cont(q)=4$ and $c\geq b$ by the property of (\ref{condition2}). Thus, the last assertion also follows.
\end{proof}


We now  calculate the number $r_4(q)$ for some ternary forms $q$ to use Table \ref{Tab:autC} as aimed. 

 \begin{lem}
 \label{r4(q)}
 We have the following numbers $r_4(q)$ of representations of $4$ by given ternary forms $q.$
 \sps
 
 \emph{(i)} $r_4(q_{4,4,4})=6,$ $r_4(q_{1,4})=12,$  and $r_4(q_{1,c})=6,$ for $c>4$ and $4\mid c.$
 \spm

 \emph{(ii)} $r_4(q_{2,4})=8 \textrm{ and }r_4(q_{2,c})=6,$ where $c>4.$
 \spm

 \emph{(iii)} $r_4(q_{3,c})=4,$ for $c>4$ and $4\mid c.$ 
\end{lem}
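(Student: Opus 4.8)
\noindent\textbf{Proof proposal for Lemma \ref{r4(q)}.}

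The plan is to count the representations of $4$ by each form by separating them according to the value of the last coordinate $z$, reducing in each case either to a two-variable count (when $z=0$) or to a positive-definiteness estimate that rules out every $z\neq 0$ (when $c$ is large).

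First I would dispose of the $z=0$ slices, which are binary. Setting $z=0$ turns $q_{1,c}$ into $4(x^2+xy+y^2)$, $q_{2,c}$ into $4(x^2-xy+y^2)$, and $q_{3,c}$ into $4(x^2+y^2)$; so the $z=0$ representations of $4$ are exactly the representations of $1$ by $x^2\pm xy+y^2$, respectively by $x^2+y^2$. The first two are norm forms of $\Z[\zeta_6]$ and so represent $1$ in $6$ ways (the six units), while $x^2+y^2$ is the norm form of $\Z[i]$ and represents $1$ in $4$ ways; this accounts for the ``generic'' values $6,6,4$.

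For $z\neq 0$ I would use that a nonzero $z$ is heavily penalized once $c>4$. For $q_{1,c}$ and $q_{3,c}$ I would write $q_{i,c}=q_{i,4}+(c-4)z^2$, noting that $q_{1,4}=2((x+y)^2+(y+z)^2+(z+x)^2)$ and $q_{3,4}=4(x-\tfrac z2)^2+4(y-\tfrac z2)^2+2z^2$ are positive definite. Since $4\mid c$ and $c>4$ give $c-4\ge 4$, a representation with $z\neq 0$ would satisfy $q_{i,c}=q_{i,4}+(c-4)z^2>(c-4)z^2\ge 4$, a contradiction; hence only $z=0$ occurs and $r_4(q_{1,c})=6$, $r_4(q_{3,c})=4$. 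For $q_{2,c}$ I would argue directly from $q_{2,c}=4(x^2-xy+y^2)+cz^2$: for any $c>4$ a nonzero $z$ forces $cz^2>4$, so again only $z=0$ survives and $r_4(q_{2,c})=6$.

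What remains are the boundary forms at $c=4$, where the $z$-penalty is absent and a short direct enumeration is needed; I expect this to be the only delicate point. For $q_{4,4,4}=4(x^2+y^2+z^2)$ the equation becomes $x^2+y^2+z^2=1$, giving the $6$ coordinate vectors. For $q_{1,4}$ I would use the identity above: $q_{1,4}=4$ is equivalent to $(x+y)^2+(y+z)^2+(z+x)^2=2$, and since $(x,y,z)\mapsto(x+y,y+z,z+x)$ is injective with image the even sublattice of $\Z^3$, while all $12$ integer points with coordinate-square-sum $2$ lie in that sublattice, I obtain $r_4(q_{1,4})=12$. For $q_{2,4}=4(x^2-xy+y^2)+4z^2$ I would split on $z$: $z=0$ gives the $6$ representations of $1$ by $x^2-xy+y^2$, each of $z=\pm1$ forces $x^2-xy+y^2=0$, i.e.\ $x=y=0$, contributing $2$ more, and $|z|\ge 2$ is impossible, for a total of $8$. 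The main obstacle is purely bookkeeping: matching the positive-definiteness thresholds ($c-4\ge 4$, respectively $c>4$) to the exact ranges asserted, and verifying that the boundary enumerations are exhaustive.
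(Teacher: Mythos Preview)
Your proof is correct and follows essentially the same approach as the paper: slice by $z$, reduce the $z=0$ case to the classical binary counts $r_1(x^2\pm xy+y^2)=6$ and $r_1(x^2+y^2)=4$, and kill $z\neq 0$ for large $c$ via positive-definiteness. The only cosmetic differences are that the paper lists the twelve solutions of $q_{1,4}=4$ explicitly rather than using your bijection $(x,y,z)\mapsto(x+y,y+z,z+x)$, and for $q_{1,c},q_{3,c}$ with $c\ge 8$ it argues by contradiction (forcing all three squares to vanish) instead of your one-line strict inequality $q_{i,4}+(c-4)z^2>4$.
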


\begin{proof}
(i)  First note that $q_{1,4}=2[(x+y)^2+(y+z)^2+(x+z)^2],$ so it is easy to see that
$$
R_4(q_{1,4})=\{(\pm1,0,0),(0,\pm1,0),(0,0,\pm1),\pm(-1,1,0),\pm(-1,0,1),\pm(0,-1,1)\},
$$
and hence, $r_4(q_{1,4})=12.$ We next easily see that 
$$
R_4(q_{4,4,4})=\{(\pm1,0,0),(0,\pm1,0),(0,0,\pm1)\},
$$
and so, $r_4(q_{4,4,4})=6.$ In a similar way, when $c>4,$ we observe that 
\begin{equation}
\label{eq: expression_q_1c}
    q_{1,c}=2[(x+y)^2+(y+z)^2+(x+z)^2]+(c-4)z^2.
\end{equation}
 Also, if $(x,y,z)\in R_4(q_{1,c})$ and $z\neq 0,$ then $z=\pm 1$ and $x+y=y+z=x+z=0$ since $c\geq8.$ But, this is impossible since this would imply that $x=-y$ and $x=y.$ Hence, if $(x,y,z)\in R_4(q_{1,c}),$ then $z=0,$ and thus $r_4(q_{1,c})=r_1(x^2+xy+y^2)$ since $x^2+xy+y^2$ is a positive binary form. Also, it is known that $r_1(x^2+xy+y^2)=6,$ namely $\{\pm(0,1),\pm(1,0),\pm(1,-1)\},$ and thus $r_4(q_{1,c})=6,$ so the first assertion follows. 

(ii) First consider $q_{2,4}=4x^2+4y^2+4z^2-4xy=2(x-y)^2+2x^2+2y^2+4z^2.$ If $(x,y,z)\in R_4(q_{2,4})$ and $z\neq 0,$ then it follows that $(x,y,z)=\pm(0,0,1).$ Suppose now that $z=0,$ and so we should find the number $r_1(x^2-xy+y^2),$ but it is known that   $x^2-xy+y^2=1$ has 6 solutions (as in the first part). Therefore, we have totally 8 solutions, and so $r_4(q_{2,4})=8.$

Next, suppose that $c\geq 5.$ Observe that 
\begin{equation}
    \label{eq: expression_q_2c}
    q_{2,c}=4x^2+4y^2+cz^2-4xy=3x^2+cz^2+(x-2y)^2.
\end{equation}
Also, since $c\geq 5,$ if $(x,y,z)\in R_4(q_{2,c}),$ then $z=0.$ Then, we can see that $r_4(q_{2,c})=r_1(x^2-xy+y^2)=6$ as was mentioned above. More precisely, we have $R_4(q_{2,c})=\{\pm(0,1,0),\pm(1,0,0),\pm(1,1,0)\},$ and so $r_4(q_{2,c})=6,$ which proves this assertion.

   (iii)  Now, observe that $q_{3,c}=4x^2+4y^2+cz^2-4yz-4xz=2[(x-z)^2+(y-z)^2+x^2+y^2]+(c-4)z^2.$ Also, since $c\geq 8$ by the hypothesis, if $(x,y,z)\in R_4(q_{3,c}),$ then we see that $z=0$ as in part (i). Then, we can see that $R_4(q_{3,c})=\{\pm(0,1,0),\pm(1,0,0)\},$ and it proves that $r_4(q_{3,c})=4.$
\end{proof}

Before we list the ternary forms $q$ when $|\Aut^+(q)|$ is 2, 4 or 8, we introduce more notations which will be often used below. 
\spm

\noindent\textbf{Notation.} Assume that $b\leq c$ and $b\geq |2r|$ below. \begin{align*} 
    q_1=&[4,b,c,0,0,-4] \textrm{ with } b>4,  \quad   q_2=[4,b,b,2r,4,4] \textrm{ with } b>4,\ r>1, \\
   q_3=&[4,b,c,0,-4,0] \textrm{ with } b\neq c,\quad q_4=[4,b,c,-b,-4,0] \textrm{ with } b>4,\ b\neq c, \\
    q_5=&[4,b,c,2r,4,4] \textrm{ with } b\neq c,\ r>1,\ q_6=[4,b,c,2r,0,-4] \textrm{ with }\\ & 4<b\neq -2r,\ r<0,\  q_7=[4,b,c,2r,-4,0] \textrm{ with } b\neq c,\ b\neq -2r,\ r<0.
\end{align*}

The following lemma is a straightforward verification.
\begin{lem}
\label{reducedforms_q7}
    All forms $q_i$ are Eisenstein reduced forms, for $1\leq i\leq 7.$
\end{lem}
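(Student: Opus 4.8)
The plan is to verify the six Eisenstein reduction conditions (\ref{condition1})--(\ref{condition6}) for each of the seven forms; by Theorem 103 of \cite{dicksonsbook}, satisfying all of them is exactly what it means for a positive form to be Eisenstein reduced, and no two distinct reduced forms coincide. The first step is bookkeeping: I would rewrite each $q_i$ in Dickson's normalization $ax^2+by^2+cz^2+2r'yz+2s'xz+2t'xy$, so that the off-diagonal entries of the bracket $[a,b,c,R,S,T]$ are $R=2r'$, $S=2s'$, $T=2t'$ and the reduction coefficients $r',s',t'$ are read off by halving. This gives $(a,b,c,r',s',t')$ equal to $(4,b,c,0,0,-2)$ for $q_1$, $(4,b,b,r,2,2)$ for $q_2$, $(4,b,c,0,-2,0)$ for $q_3$, $(4,b,c,-\tfrac{b}{2},-2,0)$ for $q_4$, $(4,b,c,r,2,2)$ for $q_5$, $(4,b,c,r,0,-2)$ for $q_6$, and $(4,b,c,r,-2,0)$ for $q_7$; all entries are integers because every off-diagonal bracket entry is even (for $q_4$ this uses that $b$ is even). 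The standing hypotheses $a=4\le b\le c$ and $b\ge|2r'|$ then already supply the ordering in (\ref{condition2}) and the third inequality of (\ref{condition3}).

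The sign condition (\ref{condition1}) is immediate: for $q_2,q_5$ the triple $(r',s',t')$ has $r'=r>1$ and $s'=t'=2$, so all three are positive, while for $q_1,q_3,q_4,q_6,q_7$ one has $r'\le 0$ together with $s',t'\in\{0,-2\}$, so all three are non-positive. The remaining magnitude bounds of (\ref{condition3}), namely $a\ge|2s'|$ and $a\ge|2t'|$, are trivial whenever the relevant slot is $0$ and otherwise reduce to the single equality $4\ge|2(\pm2)|=4$, which holds; the $\epsilon\ge0$ half of (\ref{condition2}) follows after one computes $\epsilon=a+b+2r'+2s'+2t'$ in each case and invokes $b\ge|2r'|$.

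The substance of the lemma lies in the boundary conditions (\ref{condition4})--(\ref{condition6}), which fire exactly at the equalities engineered into the definitions, and this is where I expect the real care to be needed. Since $a=4$ and an entry $\mp4$ occurs in the $xz$ or $xy$ slot of most forms, one repeatedly meets $a=-2s'$ or $a=-2t'$ (and, for $q_4$, also $b=-2r'$), and in each instance (\ref{condition5}) requires the complementary coefficient to vanish; by construction it does ($s'=0$ in $q_1,q_6$; $t'=0$ in $q_3,q_4,q_7$). For $q_2,q_5$ the equalities $a=2s'=2t'=4$ invoke (\ref{condition6}), whose demands $s'\le 2r'$ and $t'\le 2r'$ read $2\le 2r$ and hold since $r>1$, while the clause $|s'|\le|t'|$ of (\ref{condition4}) is in play for $q_2$ (forced) and for $q_1,q_6$ (when $b=c$), and in all of these $|s'|\in\{0,2\}\le 2=|t'|$. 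I expect the genuinely delicate point to be $q_4$: here $b=-2r'$ forces $\epsilon=4+b-b-4+0=0$, so the $\epsilon=0$ clause of (\ref{condition4}) activates and one must check $a+2s'+t'=4-4+0=0\le0$, which holds on the nose. Finally, the strict hypothesis $b\neq-2r$ for $q_6$ is precisely what prevents $b=-2r'$ from triggering (\ref{condition5}) (which would force the false equation $t'=0$, since $t'=-2$) and keeps $\epsilon=b+2r>0$; for $q_7$ the analogous clause is harmless because $t'=0$ already, so there the inequality $b\neq-2r$ only serves to separate $q_7$ from $q_4$. Collecting these checks over $1\le i\le7$ proves the lemma.
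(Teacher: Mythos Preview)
Your proposal is correct and follows the same approach as the paper: a direct case-by-case verification of conditions (\ref{condition1})--(\ref{condition6}). In fact, your treatment of $q_4$ is sharper than the paper's: the paper asserts that $\epsilon>0$ for all $q_i$ with $1\le i\le 5$, but for $q_4$ one actually has $\epsilon=4+b-b-4+0=0$, so the clause ``if $\epsilon=0$ then $a+2s+t\le 0$'' of (\ref{condition4}) must be checked, exactly as you do (and it holds, since $4-4+0=0$).
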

\begin{proof} In this proof, we use the notations in the definition of the Eisenstein reduced form; cf.\ equations (\ref{condition1})-(\ref{condition6}). So, by the nondiagonal coefficients, we mean $r, s, t$ respectively.
    It is clear that the properties (\ref{condition1}), (\ref{condition3}) and the first inequality in the property (\ref{condition2}) hold for all forms. It is also clear that $\epsilon>0,$ for the forms $q_i,$ for $1\leq i\leq 5.$ Since $b\geq |2r|,$ this also holds for the forms $q_i,$ for $i=6, 7.$ Hence, the second inequality in the property (\ref{condition2}) holds for all forms. 
    
    Moreover, since $\epsilon>0,$ the last inequality in the property of (\ref{condition4}) does not apply in our case. To check the other inequalities in the property of (\ref{condition4}), we first suppose that $b=4.$ Then we have the $q_i,$ for $i=3, 5 \text{ or } 7.$ It is clear that the other inequalities in that property hold for $q_3.$ If $q_i,$ for $i=5\text{ or }7,$ then it also follows since $4=b\geq|2r|.$ We next suppose that $b\neq 4.$ Then, the rest cases are $q_i,$ for $i=1, 2 \text{ or } 6.$ But, we clearly have that the second inequality in the property (\ref{condition4}) holds.

    We lastly check that  the forms $q_i,$ for $i=1,3,4,6\text{ or }7$ to show that the property (\ref{condition5}) holds and that the forms $q_i,$ for $i=2\text{ or }5$ to show that the property (\ref{condition6}) holds. For the first one, we easily see that if $t=-2,$ then $s=0$ in all cases which were mentioned above. Also, we see that if $s=-2,$ then $t=0$ in those cases. We now show that if $b=-2r,$ then $t=0.$ Note first that $t=0$ in the case that $q_i,$ for $i=3,4.$ So, the case that $q_3$ and $q_4$ trivially satisfy this condition. Since the forms $q_6$ and $q_7$ do not satisfy that $b=-2r,$ these forms vacuously satisfy that condition. Also, $q_1$ satisfies this condition trivially since $r=0.$ Therefore, the property (\ref{condition5}) holds for the forms $q_i,$ for $i=1,3,4,6\text{ or }7.$

    The property (\ref{condition6}) holds for the forms $q_2$ and $q_5$ since $2r\geq 4.$ Hence, we proved that all forms satisfy all properties of the reduced forms, and so the assertion follows.
\end{proof}

Here is the list of the ternary forms $q$ having exact properties when $|\Aut^+(q) | = 2, 4, \text{ or } 8$.

\begin{thm}
\label{LemAut4_8}
Let $q=4x^2+by^2+cz^2+2ryz+2sxz+2txy$ be an Eisenstein reduced positive ternary quadratic form with $q(x,y,z)\equiv 0,1\Mod{4},$ for all $x,y,z\in\Z.$ Let $3\nmid u=|\Aut^+(q)|.$ Then we have that 
\sps

\emph{(i)} $u=8$ $\Leftrightarrow$ $q=q_{3,c},$ with $c>4,$ or $q=q_{4,4,c},$ with $c>4,$ or $q=q_{4,b,b},$ with $b>4,$ or $q=[4,b,b,2,4,4].$
\spm

\emph{(ii)} $u=4$ $\Leftrightarrow$ $q\in\{q_1,q_2,q_3,q_4\},$ or $q=q_{4,b,c},$ with $b>4,\ b\neq c$ or $q=[4,b,c,-b,0,0],$ with $b\neq c,$ or $q=[4,b,b,2r,0,0],$ with $4<b\neq-2r\neq 0,$  
  or  $q=[4,b,c,2,4,4],$ with $4<b\neq c.$
  \spm

  \emph{(iii)} $u=2$ $\Leftrightarrow$ $q\in\{q_5,q_6,q_7\}.$ 
\end{thm}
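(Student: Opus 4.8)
The plan is to proceed exactly as in the proof of Theorem \ref{propaut6_12_24}, but now exploiting the tables of Theorem 105 of Dickson for the case $3\nmid u$. Since $q$ is assumed Eisenstein reduced with leading coefficient $a=4$, I would split into the two principal cases $r,s,t>0$ (page 179 of \cite{dicksonsbook}) and $r,s,t\leq 0$ (page 180), and in each case read off which lines of Dickson's table are \emph{compatible} with the coefficient relations of the putative reduced form. The computational facts recorded in \S\ref{ternaryquadraticforms} are the engine here: on page 179 the order-$4$ automorphs occur exactly in line $8$, and on page 180 the order-$4$ automorphs occur exactly in lines $8$, $14$ and $15$, while all order-$2$ automorphs sit in lines $1$--$7$ (resp.\ $1$--$7$). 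Since $3\nmid u$, no order-$3$ automorph is present, so I would \emph{a priori} discard lines $9$--$11$ (page 179) and lines $9$--$12$, $16$ (page 180) from consideration. The strategy for each $u\in\{8,4,2\}$ is then: determine for which coefficient relations the total number of automorphs (counting the identity) equals $u$, translate those relations into the explicit normalized forms $q_{3,c},q_{4,b,c},q_1,\dots,q_7$, and conversely verify that each listed form indeed has the claimed $u$ by counting automorphs directly from the relevant lines.

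For the forward direction ($\Rightarrow$) I would organize the bookkeeping by the presence or absence of an order-$4$ automorph. For $u=8$ an order-$4$ element must appear, so I would force the coefficient pattern of line $8$ (page 179) or lines $8,14,15$ (page 180); combined with $a=4$ and the reduction inequalities (\ref{condition1})--(\ref{condition6}), these pin down the four families in (i). For $u=4$ there are two structurally different possibilities — either a single order-$4$ automorph (generating a cyclic group of order $4$) or three commuting order-$2$ automorphs (a Klein four-group) — and I would treat these in parallel, matching each against the admissible lines and extracting the forms $q_1,\dots,q_4$ together with the three extra families in (ii). For $u=2$ only the identity and one involution survive, which forces exactly one of lines $1$--$7$ to be active and all others (including the order-$4$ and order-$3$ lines) to fail; this isolates $q_5,q_6,q_7$. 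Throughout, the congruence hypothesis $q\equiv 0,1\Mod 4$ and $a=4$ are used to eliminate spurious reduced forms, exactly as $\cont(q)=4$ and $r_4(q)>0$ were used in Corollary \ref{Aut6_12_24forimprimitiveform}. The reverse direction ($\Leftarrow$) is then immediate: for each explicitly listed form I count the automorphs appearing in the lines that its coefficient relations activate, confirming the stated value of $u$.

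The main obstacle I anticipate is the case analysis for $u=4$, because one must carefully distinguish when the group of automorphs is $C_4$ versus $C_2\times C_2$, and because several of Dickson's table lines can be simultaneously compatible with a given coefficient relation, so that the automorphs must be \emph{counted} rather than merely detected — double-counting or missing an automorph that is shared between two lines is the real danger. A secondary technical point is ensuring that each candidate form genuinely satisfies all of the Eisenstein reduction conditions (\ref{condition1})--(\ref{condition6}); this is precisely what Lemma \ref{reducedforms_q7} was set up to handle for $q_1,\dots,q_7$, and the analogous verifications for the $q_{i,c}$ families were recorded following their definition, so I would invoke those lemmas rather than redo the inequalities. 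Once the line-by-line automorph counts are assembled and cross-checked against the already-established fact that no order-$3$ automorph occurs, the equivalences in (i)--(iii) follow by matching coefficient patterns to the named forms.
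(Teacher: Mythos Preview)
Your overall strategy—use Dickson's tables, discard the order-$3$ lines since $3\nmid u$, and count automorphs line by line—is the same as the paper's. But you miss the key organizational step that makes the case analysis tractable.

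The paper does \emph{not} split primarily by the value of $u$ or by whether an order-$4$ element appears. Instead, it first exploits the hypothesis $q\equiv 0,1\Mod 4$ together with the reduction bound $4\geq |2s|,|2t|$ to force $s,t\in\{0,\pm2\}$ (by evaluating $q(1,1,0)$ and $q(1,0,1)$ modulo $4$); then condition (\ref{condition5}) rules out $(s,t)=(-2,-2)$, leaving exactly four cases for the pair $(s,t)$: $(0,-2)$, $(0,0)$, $(-2,0)$, and $(2,2)$. Each of these four cases is then matched against Dickson's tables, and the value of $u$ together with the resulting normalized form is read off at the end of each subcase. This is the reverse order from what you propose: the paper fixes $(s,t)$ and computes $u$, whereas you want to fix $u$ and search for compatible coefficient patterns.

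Without the preliminary reduction of $(s,t)$ to four values, your plan is genuinely incomplete: you would have to scan Dickson's lines against an unconstrained pair $(s,t)$, and it is unclear how you would systematically enumerate all forms for each $u$. Your claim that ``for $u=8$ an order-$4$ element must appear'' is also unjustified a priori (the group $(C_2)^3$ has order $8$ and no such element), and in fact in the paper's diagonal case $s=t=0$ the value $u=8$ for $q_{4,4,c}$ and $q_{4,b,b}$ arises not from line $8$ but from the footnote on p.~180 of \cite{dicksonsbook}, which manufactures extra automorphs when enough of $r,s,t$ vanish. Your proposed $C_4$-versus-$C_2\times C_2$ dichotomy for $u=4$ is likewise unnecessary and potentially misleading once the $(s,t)$ cases are in place: the paper never identifies the group structure, it simply counts automorphs.
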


\begin{proof} 
We have from the property of (\ref{condition3}) that $4\geq|2s|,|2t|$ since $q$ is reduced. Moreover, we observe that $s$ and $t$ are even numbers. Indeed, since $q(0,1,0)=b\equiv 0,1\Mod{4}$ and $q(1,1,0)\equiv b+2t\equiv 0,1\Mod{4},$ it follows that $t\neq\pm1.$ By considering $q(0,0,1)$ and $q(1,0,1),$ we similarly obtain that $s\neq\pm1,$ and thus $s,t\in\{0,\pm2\}.$ Also, by the property of (\ref{condition5}), we have that $s=-2\Rightarrow t=0$ and $t=-2\Rightarrow s=0.$ Hence, there are four possible cases for the pairs $s,t.$ To verify the assertions, we use the tables of Theorem 105 of \cite{dicksonsbook} as was done in the proof of Theorem \ref{propaut6_12_24}. We first show that many cases of those tables can be discarded for four possible cases.

Note that since $3\nmid u,$ we know that there is not an automorph of order 3 of $\Aut^+(q).$  Therefore, the cases of lines $9\textrm{\textendash}11$ of table on \cite[p.~179]{dicksonsbook} and the ones of lines $9\textrm{\textendash}12$ and 16 of the table on  \cite[p.~180]{dicksonsbook}  can be eliminated since  those lines have some automorphs of order 3 by what was discussed above. 
\spm

\noindent\textbf{Case 1.} Assume that  $t=-2$ and $s=0.$
\sps

\noindent We immediately see that the cases of lines 1, $3\textrm{\textendash}4,$ 6, 8 and 15 of the table on \cite[p.~180]{dicksonsbook} are not possible. We next observe that $b+2r+2s\neq 0$ by the property of (\ref{condition5}), and  we are thus left with the possible cases of lines 2 and 5.

Now note that the case of line 2 surely holds. If the case of line 5 held as well, it would follow that the case of line 9 holds, which is not possible by what was mentioned above. Hence, only the case of line 2 holds, which has only one automorph. Therefore, if $r\neq0,$ then $u=2$  and $q=q_6,$ with $b\neq-2r$ (cf.\ property (\ref{condition5})) in this case. Note that $b\neq 4$ in this case by the property of (\ref{condition4}). Also, if $r=0,$ then it follows by the footnote of \cite[p.~180]{dicksonsbook} that $u=4$ and $q=q_1,$ with $b>4.$ Thus, we have determined the reduced forms satisfying the above conditions,  and we have proved that $u=4$ or $u=2$ in those cases, and thus the assertions are verified in this case.
\spm

\noindent\textbf{Case 2.} Suppose that $s=t=0.$
\sps

\noindent We immediately observe that the cases of lines $1\textrm{\textendash}2,$ 4 and 8 are impossible. Moreover, we see that $4+b\neq -2r$ since $q$ is reduced; cf.\ the property (\ref{condition3}). This implies that $4+b+2r+2s+2t\neq0,$ and thus we can eliminate the cases of lines 7 and $13\textrm{\textendash}15.$   We are thus left with the possible cases of lines 3 and $5\textrm{\textendash}6$ of the table of \cite[p.~180]{dicksonsbook}.
\spm

\noindent\textit{Subcase} 1. Suppose that $r=0.$
\sps

\noindent Firstly, we can discard the case of line 3. Secondly, if $b=c=4,$ then the case of line 12 holds, which was eliminated above. Hence, we may assume that $b\neq 4 \text{ or } b\neq c.$

Observe next that since $r=s=t=0,$ we can derive three new automorphs from each one listed by the footnote of \cite[p.~180]{dicksonsbook}. Thus, if $b\neq 4,\ b\neq c,$ then we can eliminate the cases of lines $5\textrm{\textendash}6,$ and we don't have any case of line in the table. Hence, there is only one automorph, namely identity as well as three new automorphs derived from the identity. Thus, $u=4$  and $q=q_{4,b,c},$ with $b>4,\ b\neq c$ in this case.

If  $\textrm{ either } b=4 \textrm{ or }b=c,$ then there is only one case of line 5 or 6 respectively, and this line has only one automorph. Hence, we derive $2\times3=6$ new automorphs from this automorph and the identity. Thus, $u=8$ in those cases. Also, we have that $q=q_{4,4,c},$ with $c>4$ or $q=q_{4,b,b},$ with $b>4$ in those cases. 
\spm

\noindent\textit{Subcase} 2. Suppose that $r\neq 0.$
\sps

\noindent Since the case of line 5 is not possible, we are thus left with the cases of lines 3 and 6. Note that if both lines hold at the same time, then the case of line 10 also holds, which is not possible as was discussed above.  Thus, we can conclude that if  only the case of line 3  holds, i.e., $b=-2r$ and $b\neq c,$ then $q=[4,b,c,-b,0,0],$ with $b\neq c.$ We have that line 3 has only one automorph except the identity. When $s=t=0,$ there are precisely 2 new automorphs by the footnote of \cite[p.~180]{dicksonsbook}, and thus $u=4.$ We also conclude that if only the case of 6  holds, i.e., $b=c$ and $b\neq-2r,$ then $q=[4,b,b,2r,0,0],$ with $b\neq-2r,$ and thus $u=4$ again by the same reason.  Note that $b\neq 4$ in both cases since $q$ is reduced; cf.\ (\ref{condition4}).

Therefore, we have found in this case that   $q$ is one of the five forms which were found above.
For each case, we have found $u$ as stated, and thus all assertions are verified in this case.
\spm

\noindent\textbf{Case 3.} Assume that  $s=-2$ and $t=0.$
\sps

\noindent We immediately see that the cases of lines 2 and 6 are not possible. Since we see that $b\neq c$ by the property of (\ref{condition4}),
 we are left with the cases of lines 1, $3\textrm{\textendash}5$ and $7\textrm{\textendash}8.$
 \spm

\noindent\textit{Subcase} 1. Suppose that $b=-2r.$
\sps

\noindent We see that if $a=b,$ then we are left with the cases of lines 1, $3\textrm{\textendash}5$ and $7\textrm{\textendash}8,$ which implies that $q=q_{3,c},$ with $c>4.$ Since there are precisely 7 automorphs in those lines except identity, it follows that $u=8.$ We also see that if $a\neq b,$ then we are just left with the cases of lines 1 and $3\textrm{\textendash}4,$ which implies that $q=q_4.$ Since there are 3 automorphs in those lines, it follows that $u=4.$
\spm

\noindent\textit{Subcase} 2. Suppose that $b\neq-2r.$
\sps

\noindent We immediately see that the cases of lines $3\textrm{\textendash}4$ and $7\textrm{\textendash}8,$  can be discarded. In addition, we can eliminate the case of line 5 because it would imply that $b=-2r,$ which is not possible. Hence, we are left with the case of line 1. If $r\neq0,$ then $q=q_7.$ Since there is only one automorph in this line, it follows that $u=2.$ If $r=0,$ then $q=q_3$ and we can derive 2 more automorphs by the footnote of \cite[p.~180]{dicksonsbook}, and thus $u=4.$  Therefore, the assertions have been verified in Case 3.
\spm

\noindent\textbf{Case 4.} Suppose that $s=t=2.$
\sps

\noindent We can immediately eliminate the case of line 4 of the table of \cite[p.~179]{dicksonsbook}. We also discard the case of line 5 since it would imply the case of line 9, which is impossible.
\spm

\noindent\textit{Subcase} 1. Assume that $b=c.$
\sps

\noindent We analyze this subcase whether $r=1$ or not. If $r=1,$ then we clearly have that the cases of lines $1\textrm{\textendash}3$ and $6\textrm{\textendash}8$ hold, which implies that $q=[4,b,b,2,4,4].$ Since there are precisely 7 automorphs except the identity, $u=8.$ Next, suppose that $r\neq1.$ We then clearly eliminate the cases of lines $2\textrm{\textendash}3$ and 8. Hence, we are left with the cases of lines 1 and $6\textrm{\textendash}7,$ which implies that $q=q_2$ since $b\neq4$ by the property of (\ref{condition4}). Since there are 3 automorphs in those lines, it follows that $u=4.$
\spm

\noindent\textit{Subcase} 2. Assume that $b\neq c.$
\sps

\noindent We immediately eliminate the cases of lines $6\textrm{\textendash}8.$ Thus we are left with the cases of lines $1\textrm{\textendash}3$, and we see  that $r=1\Leftrightarrow q=[4,b,c,2,4,4]$ and  $r\neq 1 \Leftrightarrow q=q_5.$ In the former situation, we have precisely 3 automorphs in the lines $1\textrm{\textendash}3,$ and in the latter situation, we have just one automorph in the line 1. Hence, we obtain that $u=4$ or $u=2$ respectively. Note that $b\neq4$ in the former situation since $q\equiv0,1\Mod{4}.$

Therefore, all assertions have been verified in all cases.
\end{proof}

\begin{cor} Let $q$ be a positive ternary quadratic form in Dickson's sense, and let $u:=|\Aut^+(q)|.$ If $3\mid u,$ then $u\in\{6,12,24\}.$ In addition, if $\min(q)=4$ and $q\equiv0,1\Mod{4},$ then 
\begin{equation}
   u\in \{1,2,4,6,8,12,24\}. \label{Autf}
\end{equation}
\end{cor}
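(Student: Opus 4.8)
The plan is to derive the Corollary entirely from the two enumeration results, Theorems \ref{propaut6_12_24} and \ref{LemAut4_8}, after first normalizing $q$. First I would replace $q$ by a $\GL_3(\Z)$-equivalent Eisenstein reduced form, which is permissible by Theorem 103 of \cite{dicksonsbook}. This replacement changes none of the relevant data: the groups $\Aut^+$ of equivalent forms are conjugate in $\GL_3(\Z)$, so $u$ is unchanged; equivalent forms represent the same integers, so both the hypothesis $q\equiv 0,1\Mod{4}$ and the value $\min(q)$ are preserved; and by Theorem 101 of \cite{dicksonsbook} the minimum of a reduced form equals its leading coefficient $a$, so in the second assertion I may assume $a=4$. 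Hence I work throughout with a reduced form $q=ax^2+by^2+cz^2+2ryz+2sxz+2txy$.

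For the first assertion I would apply Theorem \ref{propaut6_12_24} directly. Its proof treats \emph{every} Eisenstein reduced positive ternary form with $3\mid u$ and assigns to each exactly one of the values $6,12,24$; therefore $3\mid u$ forces $u\in\{6,12,24\}$. Note that this requires no condition on $\min(q)$ or on congruences, so it holds for an arbitrary positive ternary form in Dickson's sense.

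For the second assertion I would argue by the dichotomy $3\mid u$ versus $3\nmid u$. If $3\mid u$, the first assertion already gives $u\in\{6,12,24\}$. If $3\nmid u$, then, since now $a=4$ and $q\equiv 0,1\Mod{4}$, the form satisfies exactly the hypotheses of Theorem \ref{LemAut4_8}; its cases (i)--(iii) enumerate all such forms with $u\in\{8,4,2\}$, the only remaining possibility being a reduced form whose sole determinant-one automorph is the identity, i.e.\ $u=1$. Thus $3\nmid u$ yields $u\in\{1,2,4,8\}$, and taking the union of the two cases gives exactly $u\in\{1,2,4,6,8,12,24\}$, which is (\ref{Autf}).

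The step I expect to demand the most care is the bookkeeping in the $3\nmid u$ branch: one must be certain that the case analysis of Theorem \ref{LemAut4_8} leaves no reduced form with $\min(q)=4$ and $q\equiv 0,1\Mod{4}$ unaccounted for. Since that theorem is phrased as a list of the forms with $u\in\{2,4,8\}$ rather than as a closed trichotomy, I would make explicit that any reduced form of leading coefficient $4$ satisfying the congruence but matching none of the listed coefficient patterns contributes only the value $u=1$; because $1$ already lies in the target set, the containment (\ref{Autf}) is unaffected. As an independent consistency check one may recall the classical fact that $\Aut^+(q)$ is a finite subgroup of $\SL_3(\Z)$, whose order necessarily lies in $\{1,2,3,4,6,8,12,24\}$; the only such value divisible by $3$ that is then excluded by Theorem \ref{propaut6_12_24} is $3$, and removing it recovers precisely the set appearing in (\ref{Autf}).
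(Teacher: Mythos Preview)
Your proposal is correct and follows essentially the same route as the paper: reduce to an Eisenstein reduced form, invoke Theorem~\ref{propaut6_12_24} for the $3\mid u$ case, and Theorem~\ref{LemAut4_8} for the $3\nmid u$ case under $a=\min(q)=4$. Your worry about leftover forms with $u=1$ is unnecessary, since the proof of Theorem~\ref{LemAut4_8} is itself an exhaustive case analysis over all admissible pairs $(s,t)$ and in every subcase determines $u\in\{2,4,8\}$; thus no leftover occurs (and including $1$ in the target set is harmless in any event).
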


\begin{proof}
As in the proof of Corollary \ref{Aut6_12_24forimprimitiveform}, we may assume that $q$ is a reduced form. Thus, the first statement directly follows from Theorem \ref{propaut6_12_24}, and in particular, equation (\ref{Autf}) holds when $3\mid u.$  We next suppose that $3\nmid u.$ Since $\min (q)=4,$ we have that $q(1,0,0)=4$ by Theorem 101 of \cite{dicksonsbook}, and so  equation (\ref{Autf}) holds by Theorem \ref{LemAut4_8}.
\end{proof}

Recall that Smith\cite[p.~278]{smith} and  Dickson\cite[p.~179]{dicksonsbook} stated (without proving) equation (\ref{Autf}) for a positive ternary form in Dickson's sense. 

\section{Lists of the ternary quadratic forms}

\label{lists_ternary_forms}

In the previous section, we have determined ternary quadratic forms based on the order of their automorphism groups. We now determine which of those forms is equivalent to a refined Humbert invariant by using the main results in \cite{Kir} and \cite{refhum}. While this task is quite easy in the imprimitive case, it is highly complicated in the primitive case. In the latter situation, our method uses the theory of \textit{genus equivalence} for ternary quadratic forms for some cases. Then, we satisfy a proof of Theorem \ref{firstmainresultPART2} by applying the results in Section \ref{ternaryquadraticforms}.

\begin{prop} \label{allformsarehumbert}
Let $4\mid c>0.$ Then $q_{i,c}$ is equivalent to a refined Humbert invariant $q_C,$ for some curve $C$ of genus $2,$ for $1\leq i\leq 4$ and $i=6.$ Moreover, if $\cont(q_j)=4,$ for $1\leq j\leq7,$ then $q_{j}$ is equivalent to a refined Humbert invariant $q_C,$ for some curve $C$ of genus $2.$  
\end{prop}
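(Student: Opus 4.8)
The plan is to reduce everything to the Jacobian criterion (\ref{thetaisirreducible}). To show that a given positive ternary form $q$ is equivalent to a refined Humbert invariant $q_C$ of a genus $2$ curve, it suffices to produce \emph{one} principally polarized CM product surface $(A,\theta)$ with $q_{(A,\theta)}\sim q$ and then to verify that $q$ represents no value equal to $1$. Since the set of represented values is invariant under $\GL_3(\Z)$-equivalence, (\ref{thetaisirreducible}) then forces $(A,\theta)\simeq(J_C,\theta_C)$ for some genus $2$ curve $C$, whence $q_C=q_{(A,\theta)}\sim q$, as desired.

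First I would dispatch the ``does not represent $1$'' half of the criterion once and for all. Every form occurring in the statement has content $4$: for the $q_{i,c}$ with $4\mid c$ this is read off directly from the coefficients in the Notation preceding Lemma \ref{r4(q)}, and for the $q_j$ it is the standing hypothesis $\cont(q_j)=4$. Writing $q=4q'$ with $q'$ integral, we get $q(x,y,z)\equiv 0\Mod{4}$ for all $x,y,z\in\Z$, so $q$ never takes the value $1$. Thus the second condition in (\ref{thetaisirreducible}) holds automatically, and the entire proposition is reduced to realizing each listed form as a refined Humbert invariant of a CM product surface.

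The realization step is the heart of the argument. Here I would invoke the description of content-$4$ refined Humbert invariants of CM product surfaces furnished by \cite{Kir} (the results surrounding (\ref{content1or4}), cf.\ Propositions 15 and 19 there): for a CM elliptic curve $E$ with $\End(E)$ an order in an imaginary quadratic field, $\NS(E\times E)$ has rank $4$, and the refined Humbert invariant attached to a decomposable (product) polarization is a content-$4$ ternary form whose coefficients are governed by the discriminant and conductor of the order. The task is then to match each listed form against this description: I would identify the free diagonal entry $c$ (and, for the $q_j$, the entries $b,c,r$) with the corresponding arithmetic of the order, and check that as the order varies the parameter $c$ runs through exactly the positive multiples of $4$ demanded, carrying the prescribed off-diagonal pattern. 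Because the forms are already in Eisenstein reduced shape (Lemma \ref{reducedforms_q7} together with the reducedness remarks before Lemma \ref{r4(q)}), this matching reduces to an equivalence test between reduced forms, which is unambiguous by Theorem 104 of \cite{dicksonsbook}; where the product-surface computation is delicate I would instead appeal to the explicit classification of primitive ternary forms in \cite{refhum} to certify realizability.

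The principal obstacle I anticipate is precisely this parameter matching. One must confirm not only that the diagonal and off-diagonal coefficients produced by the product polarization agree with those of $q_{1,c},\dots,q_{4,c},q_{6,c}$ and of the content-$4$ forms among $q_1,\dots,q_7$, but also that \emph{every} admissible value of $c$ is genuinely attained by some CM order; it is plausibly the failure of such an attainment constraint that accounts for the absence of $q_{5,c}$ from the first assertion, even though it too has content $4$ and hence never represents $1$. I expect the forms with the largest automorphism groups, namely those with $|\Aut^+(q)|\in\{12,24\}$ in the sense of Corollary \ref{Aut6_12_24forimprimitiveform}, to be the most constrained and therefore the subtlest cases, since there the intersection form on $\NS(E\times E)$ leaves the least freedom. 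Once the realization is secured in every case, (\ref{thetaisirreducible}) closes the proof for all the listed forms simultaneously.
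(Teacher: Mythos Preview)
Your overall architecture is correct: reduce to (\ref{thetaisirreducible}), dispose of the ``represents $1$'' condition via the content, and then realize each form as a refined Humbert invariant of some principally polarized abelian surface. The paper does exactly this. The divergence is entirely in the realization step, and there your proposal is not yet a proof.

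The paper does not match parameters against CM orders at all. Instead it invokes a single clean criterion, Theorem~2 of \cite{Kir}: a positive ternary form $q$ of content $4$ with leading coefficient $4$ is equivalent to a refined Humbert invariant provided $\tfrac{1}{2}q$ is \emph{improperly primitive}. The verification of this last condition is a one-line observation that you missed: every form in the list carries an off-diagonal coefficient equal to $\pm 4$. Dividing by $2$ therefore produces an off-diagonal coefficient $\pm 2$, i.e.\ one of the Dickson parameters $r',s',t'$ equals $\pm 1$, so $\gcd(a',b',c',r',s',t')=1$ while all diagonal entries remain even. That is precisely improper primitivity, and the proof is finished.

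This also resolves your puzzlement about $q_{5,c}=[4,c,c,-c,0,0]$: its off-diagonal coefficients are $-c,0,0$, none of which is $\pm 4$ when $c>4$, so $\tfrac{1}{2}q_{5,c}$ has content $\geq 2$ and is \emph{not} improperly primitive. The criterion fails, and indeed the paper later shows (in the proofs of Propositions \ref{classificationD6} and \ref{classificationC2_C2}) that $q_{5,c}$ is never a refined Humbert invariant. So the omission of $i=5$ is not an attainment subtlety of CM orders but a structural obstruction visible at the level of coefficients.

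Your proposed route---explicitly constructing $(E\times E,\theta)$ for a suitable order and matching Eisenstein-reduced coefficients---could in principle be made to work, but it is far more laborious, and as written it is only a sketch: you acknowledge the parameter matching as an anticipated obstacle without indicating how to overcome it. Replace that entire step with the improper-primitivity check and the proof becomes two sentences.
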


\begin{proof}
First of all, note that all ternary quadratic forms in both statements are positive definite. Secondly, we see that all forms have a nondiagonal coefficient $4$ or $-4.$ Since the content of those forms is 4, we thus obtain that the forms $\frac{1}{2}q_{i,c}$ and the forms $\frac{1}{2}q_j$ are improperly primitive. Moreover, since the leading coefficient of those forms is 4, it follows from Theorem 2 of \cite{Kir} that all forms are equivalent to a refined Humbert invariant. Moreover, it is clear that they don't represent 1, and the proposition follows from the assertion (\ref{thetaisirreducible}).
\end{proof}

We continue to determine which of the forms discussed above are equivalent to a refined Humbert invariant when they are primitive. In this case, its proof is more complicated, and it needs arguments related to the theory of ternary forms. To this end, we recall some basic concepts here. Let $f$ be a positive primitive ternary quadratic form. By $\disc(f)$, we mean its \textit{discriminant} as defined in \cite[p.~2]{watson1960integral} (or \cite[p.~335]{brandt1951zahlentheorie}). Recall that $f$ comes equipped with two \textit{basic genus invariants} $I_1(f)$ and $I_2(f)$ introduced by Brandt\cite[p.~316]{brandt1952mass}, \cite[p.~336]{brandt1951zahlentheorie}. Moreover, recall that $f$ has \textit{reciprocal} as defined by Brandt\cite[p.~336]{brandt1951zahlentheorie}; this is denoted by $F_f.$ Depending on the basic genus invariants, we determine the set of  the \textit{assigned characters} $X(f)$ and $X(F_f)$ of $f$ and $F_f;$ cf.\ \cite{brandt1951zahlentheorie}. Let us put 
$$
\chi_{\ell}(x) \;=\; \left(\frac{x}{\ell}\right),
$$ for $x$ prime to $\ell$, where $\ell$ is an odd prime, and
let us define 
$$
\chi_{-4}(x) \;=\; \left(\frac{-4}{x}\right) \;=\; (-1)^{(x-1)/2} \text{ and } \chi_8(x) \;=\; \left(\frac{8}{x}\right) \;=\; (-1)^{(x^2-1)/8},
$$
for odd $x,$ as in
 \cite{brandt1951zahlentheorie}. Here,  the symbol $\left(\frac{.}{.}\right)$ is the the Kronecker-Jacobi symbol.

\begin{prop} 
\label{all_primitiveforms_humbert}
    We have that $f_1:=q_{1,c},\ f_2:=q_{2,c},\ f_3:=q_{3,c'}\ f_6:=q_{6,c},\text{ and } f_4:=q_{4,4,c}$ are equivalent to a form $q_C,$ for some genus $2$ curve $C,$ for all $c, c'>1,$ $c\equiv1\Mod{4}$ and $c'\equiv1\Mod{8}.$
\end{prop}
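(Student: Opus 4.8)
The plan is to apply the criterion (\ref{thetaisirreducible}): a positive definite ternary form $q$ not representing $1$ that arises as the refined Humbert invariant $q_{(A,\theta)}$ of some principally polarized abelian surface $(A,\theta)$ automatically equals $q_C$ for a genus $2$ curve $C$. Thus for each of the five families it suffices to establish two things: that the form does not represent $1$, and that it belongs to the list of primitive refined Humbert invariants classified in \cite{refhum}. The first point is immediate: every $f_i$ is Eisenstein reduced with leading coefficient $4$, so $\min(f_i)=4$ by Theorem 101 of \cite{dicksonsbook}, and in particular $f_i(x,y,z)\neq 1$ for all integer vectors. The congruence hypotheses $c\equiv 1\pmod 4$ (resp.\ $c'\equiv 1\pmod 8$) also force $\cont(f_i)=\gcd(4,c)=1$, so we are genuinely in the primitive regime where the classification of \cite{refhum} applies.

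The substance is therefore to match each family against that classification. Since it is phrased through the genus-theoretic data of a primitive ternary form, I would first record, for each $f_i$, its discriminant $\disc(f_i)$, its reciprocal $F_{f_i}$, and the two basic genus invariants $I_1(f_i), I_2(f_i)$. The Gram matrices are explicit --- e.g. $f_1$ has Gram matrix $\left(\begin{smallmatrix}4&2&2\\2&4&2\\2&2&c\end{smallmatrix}\right)$ and $f_4$ is diagonal --- so $\disc$ and the reciprocal forms are direct computations, and $I_1, I_2$ follow from their definitions in \cite{brandt1952mass}, \cite{brandt1951zahlentheorie}. From these I would assemble the assigned-character sets $X(f_i)$ and $X(F_{f_i})$, evaluating $\chi_\ell, \chi_{-4}, \chi_8$ at the relevant arguments, and then compare the resulting genus symbol with the criterion of \cite{refhum}.

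With the genus symbols in hand, I would invoke the theory of genus equivalence: \cite{refhum} exhibits the primitive refined Humbert invariants as a union of genera, so it is enough to verify that the genus of each $f_i$ appears in that list, equivalently that its assigned characters satisfy the required relations for every admissible $c$ (resp.\ $c'$). Where convenient I would make this concrete by exhibiting, for each $f_i$, a form already known to be a $q_C$ lying in the same genus, so that genus equivalence transfers the refined-Humbert-invariant property to $f_i$; this is the point at which the genus-equivalence formalism does the real work.

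The main obstacle is uniformity over the parameter. The assigned character at an odd prime $\ell$ enters through $\chi_\ell$, and as $c$ ranges over its congruence class its factorization varies arbitrarily, so one must argue that the membership condition is insensitive to this factorization rather than check it prime by prime. The family $f_3=q_{3,c'}$ is the delicate one: it carries the stronger hypothesis $c'\equiv 1\pmod 8$ rather than merely $\pmod 4$, because $c'\equiv 1\pmod 8$ is exactly the condition making both $\chi_{-4}$ and $\chi_8$ trivial on $c'$, whereas $c\equiv 1\pmod 4$ alone leaves $\chi_8$ undetermined (it can equal $-1$ when $c\equiv 5\pmod 8$). Pinning down the $2$-adic symbol of $f_3$ under the mod $8$ hypothesis, and confirming that the remaining four families fall into the classified genera under the weaker mod $4$ hypothesis, is where the bulk of the careful $2$-adic casework resides.
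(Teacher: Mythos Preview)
Your approach is essentially the paper's: both reduce to the classification in \cite{refhum} after noting that each $f_i$ is Eisenstein reduced with minimum $4$ (hence does not represent $1$), compute the adjoint/reciprocal, $I_1$, $I_2$, and the assigned characters, and then verify the genus-theoretic criterion for primitive refined Humbert invariants. The paper makes the second condition concrete as $\bigl(\tfrac{I_2(f)}{m}\bigr)=1$ for some $m\in R(F_f)$ with $\gcd(m,2I_2(f))=1$, and dispatches your ``uniformity over the parameter'' worry not by analyzing the factorization of $c$ but by exhibiting, for each family, a single explicit value $m$ represented by $F_{f_i}$ (e.g.\ $m=3$ for $f_1$, $m=c'$ for $f_3$, $m=c+4$ for $f_4$) for which the Jacobi symbol is seen to equal $1$ by an elementary congruence; your reading of the $c'\equiv 1\pmod 8$ hypothesis as the condition forcing $\chi_8(f_3)=1$ is exactly how the paper uses it.
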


\begin{proof}
    To prove this assertion we  show that the $f_i$ satisfy the conditions in Theorem 20(iii) of \cite{refhum}. Then, the assertion follows from this result and Theorem 1 of \cite{refhum} (and equation (\ref{thetaisirreducible})).

    We clearly see that those forms are primitive positive and satisfy the condition (\ref{0,1MOD4}), for any $c, c'$. We next claim  that the following conditions hold for the $f_i$.
    \begin{align}
    \label{chi(f)=1}
        \chi(f) &= 1, \text{ for every assigned character } \chi \in X(f).\\\label{I_2(f)=1}
 \left(\frac{I_2(f)}{m}\right)&=1, \text{ for some } m\in R(F_f) \text{ with } \gcd(m, 2I_2(f)) = 1.
    \end{align}

In order to prove these equations for $f_i$'s, we find its basic genus invariants and one suitable element in $R(F_{f_i})$. In this regard, the table below gives all the necessary information on the forms.

The second column of the following table shows the \textit{adjoint forms} $|I_1(f)|F_f$, and the third and the fifth columns are the absolute value of the genus invariant $I_1$ and the genus invariant $I_2$ respectively. The fourth column shows the discriminants and the last column is an exact number represented by the reciprocal forms.

  \begin{table}[H]\centering 
  \begin{threeparttable}
    \caption{Calculational properties of exact ternary quadratic forms}{\label{Table: in_proof}}
     \begin{tabular}{|c|c|c|c|c|c|} 
 \hline
  $i$ & $|I_1(f_i)|F_{f_i}$ &   $|I_1(f_i)|$ & $\disc(f_i)$ & $I_2(f_i)$ & $R(F_{f_i})$\\ [0.5ex] \hline
  
 $1$ &$16[c-1,c-1,3,1,1,c-2]$ & $16$ & $16(4-3c)$ & $4-3c$ & $3$ \\ \hline
  $2'$ &$16[c,c,3,0,0,c]$ & $16$ & $-48c$ & $-3c$ & $4c+3$ \\ \hline
  $2''$ &$48[c/3,c/3,1,0,0,c/3]$ & $48$ & $-48c$ & $-c/3$ & $1$ \\ \hline
  $3$& $32[\frac{c'-1}{2}, \frac{c'-1}{2},2,2,2,1]$ & $32$ & $64(2-c')$ & $2-c'$ & $c'$ \\ \hline
  $4$ & $16[c,c,4,0,0,0]$ & $16$ & $-64c$ & $-4c$ & $c+4$ \\ \hline
  $6$ & $16[c,c-1,4,0,4,0]$ & $16$ & $64(1-c)$ & $4(1-c)$ & $c$ \\ \hline

\end{tabular}

    \begin{tablenotes}
      \small
      \item \textbf{Notation:} In the table, when $3\mid c$, we denote $f_2$ with $f_{2''}$, otherwise, we denote it with $f_{2'}$.
    \end{tablenotes}
  \end{threeparttable}
\end{table}

To verify the table, recall first that  adjoint $\adj(f)$ of $f$ is defined by the formula: 
$$
A(\adj(f)) \;=\; -2\adj(A(f)),
$$
where $A(f)$ is the \textit{coefficient matrix} of $f$ as defined in \cite[p.~2]{watson1960integral} ($\adj(M)$ is the usual adjoint of a  matrix $M$). Thus, the second column is just an easy matrix calculation.  We then easily see from the second column that $|I_1(f_i)|=16,$ for $i=1,2', 4,6$ and $|I_1(f_3)|=32$ since $|I_1(f)|$ is equal to the content of the adjoint form of $f$ (and since $c\equiv 1\Mod{4}$). Also, we see that $|I_1(f_{2''})|=48$,  
and so the third column is verified.

 Since we have a relation
 \begin{equation}
 \label{eq: invariant_discriminant}
     I_1^2(f)I_2(f) \;=\; 16\disc(f)
 \end{equation}
  by \cite[p.~336]{brandt1951zahlentheorie}, it suffices to find the discriminants of the forms $f_i$ to determine the second genus invariant $I_2.$ By the straightforward matrix calculation, we obtain the fourth column.  Thus, the  fifth column follows  from equation (\ref{eq: invariant_discriminant}). For the last column, we observe that $F_{f_1}(0,0,1)=3$, $F_{f_{2'}}(2,0,1)=4c+3$, $F_{f_{2''}}(0,0,1)=1$, $F_{f_3}(1,1,0)=c'$, $F_{f_4}(1,0,1)=c+4$ and $F_{f_6}(1,0,0)=c$.

    By Brandt\cite{brandt1951zahlentheorie}, \cite{brandt1952mass}, we write the assigned characters of the $f_i$ as follows: $X(f_i)=\{\chi_{-4}\},$ for $i=1, 4 \textrm{ and }6$, $X(f_3)=\{\chi_{-4},\chi_8\}$, and $X(f_2)=\{\chi_{-4},\chi_3\}$ if $3\mid c,$ otherwise, $X(f_2)=\{\chi_{-4}\}$. Since the $f_i$ clearly represent a number $n \equiv1\Mod{4},$ we have that $\chi_{-4}(f_i)=1.$ Moreover, since $f_3(0,0,1)=c'\equiv1\Mod{8},$ it follows that $\chi_8(f_3)=1.$ In the case that $3\mid c$, we see that $\chi_3(f_2) = \chi_3(4) = 1$. Thus, equation (\ref{chi(f)=1}) holds for all forms $f_i.$
    
    Observe  that $\gcd(3,2I_2(f_1))=\gcd(3,8-6c)=1.$ Since $4-3c\equiv1\Mod{3},$ we have that $\left(\frac{4-3c}{3}\right)=1.$  Observe also  that $\gcd(c,8-8c)=1.$ Moreover, we see that $4-4c\equiv 2^2\Mod{c},$ and thus $\left(\frac{4-4c}{c}\right)=1.$ Next, note that  $\gcd(c',4-2c')=1.$ We know that  $\left(\frac{2}{c'}\right)=1,$ for all $c'\equiv1\Mod{8},$ and thus $\left(\frac{2-c'}{c'}\right)=1.$  We lastly check that  $\gcd(c+4,-8c)=1.$ Then we see that $-4c\equiv -4c+c(c+4)\equiv c^2\Mod{c+4},$ which proves that  $\left(\frac{-4c}{c+4}\right)=1.$ We are thus left with the case that $f_2$ to discuss (\ref{I_2(f)=1}). For this, first suppose that $3\nmid c$, and so $I_2(f_2) = -3c$.  We see that $4c+3$  is relatively prime to $2I_2(f_2) = -6c$ since $3\nmid c$. Then we see that $-3c \equiv  -3c+c(4c+3)\equiv (2c)^2\Mod{4c+3},$ which proves that  $\left(\frac{-3c}{4c+3}\right)=1$. We next suppose that $3 \mid c$, and now we have that  $I_2(f_2) = -c/3$. Since $F_{f_2}$ represents 1 in this case,  equation (\ref{I_2(f)=1}) holds trivially. Therefore, equation (\ref{I_2(f)=1}) holds for all the forms $f_i$'s.
    
    We thus conclude that the $f_i$ are equivalent to a refined Humbert invariant by Theorems 1 and 20 of \cite{refhum}. Moreover, they don't represent 1 since they are reduced and their minimum value is their leading coefficient, we obtain from equation (\ref{thetaisirreducible}) that $f_i\sim q_C,$ for some genus 2 curve $C,$ for the $f_i$'s, so the assertion follows.
\end{proof}

We are now ready to start to list the ternary forms $q_C$ based on the information $\Aut(C)$, i.e., to prove the main results. 

Recall  that there is only one genus $2$ curve $C$ up to isomorphism such that $\Aut(C)\simeq C_3\rtimes D_4,$ and $C$ has a model $y^2=x^6-1.$ Also, recall that there is a unique genus 2 curve $C'$ up to isomorphism such that $\Aut(C')\simeq \GL_2(3),$ and $C'$ has a model $y^2=x(x^4-1).$ These two results follow from \cite{shaska2004elliptic}, on pages 711 and 712, and Theorem 2.

We first find the  ternary forms $q_{C}$ and $q_{C'}$ such that $\Aut(C)\simeq \GL_2(3)$ and $\Aut(C')\simeq C_3\rtimes D_4.$

\begin{prop} 
\label{autgl3}
Let $q$ be a ternary form. Then there exists a genus $2$ curve $C$ with
  $\Aut(C)\simeq \GL_2(3)$ such that $q \sim q_C$ if and only if $q_C\sim q_{1,4}.$ Also, there exists a genus $2$ curve $C'$ with  $\Aut(C')\simeq C_3\rtimes D_4$  if and only if $q_{C'}\sim q_{2,4}.$
\end{prop}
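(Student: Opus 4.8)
The plan is to leverage the two uniqueness facts just recalled (from Shaska--Vökl\"{e}in) together with Table~\ref{Tab:autC} and the tools developed in Section~\ref{ternaryquadraticforms}. Since there is a \emph{unique} genus $2$ curve $C$ with $\Aut(C)\simeq \GL_2(3)$ and a \emph{unique} curve $C'$ with $\Aut(C')\simeq C_3\rtimes D_4$, each of $q_C$ and $q_{C'}$ is a single well-defined equivalence class of ternary forms, so the proposition amounts to identifying those two classes explicitly. Thus the ``only if'' direction is the real content: I must show $q_C\sim q_{1,4}$ and $q_{C'}\sim q_{2,4}$. The ``if'' direction is then automatic, because once the class of $q_C$ (resp.\ $q_{C'}$) is pinned down, any $q\sim q_{1,4}$ (resp.\ $q\sim q_{2,4}$) is by definition equivalent to the refined Humbert invariant of the unique curve in question.

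First I would read off from Table~\ref{Tab:autC} the invariants forced by the automorphism group. For $\GL_2(3)$ we have $|\Aut(C)|=24$, hence $a(q_C)=12$ and $r_4(q_C)=12$; for $C_3\rtimes D_4$ we have $|\Aut(C)|=12$, hence $a(q_{C'})=6$ and $r_4(q_{C'})=8$. By equation~(\ref{athetadividesAut}) we have $a(q_C)\mid |\Aut^+(q_C)|$, so $12\mid |\Aut^+(q_C)|$ and $6\mid |\Aut^+(q_{C'})|$; in both cases $3\mid |\Aut^+|$, which puts us squarely in the regime of Theorem~\ref{propaut6_12_24}. I also know from~(\ref{content1or4}) and~(\ref{0,1MOD4}) that each $q_C$ has content $1$ or $4$ and satisfies $q\equiv 0,1\Mod 4$, and since these forms represent $4$ (as $r_4>0$) the content cannot be $1$ unless the Eisenstein reduction has leading coefficient $4$; I expect both to be imprimitive with $\cont=4$, which lets me apply Corollary~\ref{Aut6_12_24forimprimitiveform}.

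The decisive step is to combine these numerical constraints to isolate a \emph{single} reduced form in each case. For $\GL_2(3)$: Corollary~\ref{Aut6_12_24forimprimitiveform}(i) says $|\Aut^+(q_C)|=24$ forces $q_C\sim q_{1,4}$ or $q_C\sim q_{4,4,4}$; but Lemma~\ref{r4(q)}(i) gives $r_4(q_{1,4})=12$ and $r_4(q_{4,4,4})=6$, and since Table~\ref{Tab:autC} demands $r_4(q_C)=12$, only $q_{1,4}$ survives. I would first confirm $|\Aut^+(q_C)|=24$ rather than some larger multiple of $12$: since $q_C$ is imprimitive with $\cont=4$, the corollary restricts $|\Aut^+|$ to $\{6,12,24\}$ when $3\mid |\Aut^+|$, and $a(q_C)=12$ rules out $6$, so the options are $12$ and $24$; the case $|\Aut^+|=12$ is excluded because Corollary~\ref{Aut6_12_24forimprimitiveform}(ii) would force $q_C\in\{q_{2,c},q_{5,c}\}$ with $4\mid c>4$, all of which have $r_4=6$ by Lemma~\ref{r4(q)}(ii), contradicting $r_4(q_C)=12$. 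For $C_3\rtimes D_4$: here $a(q_{C'})=6$ forces $6\mid|\Aut^+|\in\{6,12,24\}$, and I rule out $6$ and $24$ using the $r_4$-values ($r_4(q_{C'})=8$, whereas the forms with $|\Aut^+|=24$ have $r_4\in\{6,12\}$ and those with $|\Aut^+|=6$, namely $q_{1,c}$ for $4\mid c>4$, have $r_4=6$ by Lemma~\ref{r4(q)}(i)); so $|\Aut^+|=12$, and Corollary~\ref{Aut6_12_24forimprimitiveform}(ii) leaves $q_{2,c}$ or $q_{5,c}$ with $4\mid c>4$. The remaining discrimination---showing $c=4$ so that $q_{C'}\sim q_{2,4}$, and excluding the $q_{5,c}$ branch and the $c>4$ cases---is where I expect the only mild friction: I would use that $r_4(q_{2,4})=8$ (Lemma~\ref{r4(q)}(ii)) matches $r_4(q_{C'})=8$ exactly, whereas $r_4(q_{2,c})=6$ for $c>4$ and the $q_{5,c}$ forms likewise fail to give $r_4=8$, so $q_{2,4}$ is forced. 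The main obstacle is therefore bookkeeping rather than depth: one must be careful that the candidate lists from Corollary~\ref{Aut6_12_24forimprimitiveform} are exhaustive and that the $r_4$ computations in Lemma~\ref{r4(q)} cleanly separate $q_{1,4}$ (value $12$) from the $a=12$ competitors and $q_{2,4}$ (value $8$) from the $c>4$ family, so that in each case exactly one reduced form matches all of $a$, $r_4$, and $|\Aut^+|$.
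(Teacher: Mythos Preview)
Your approach differs substantially from the paper's and is more laborious. The paper proves the $(\Leftarrow)$ direction first: by Proposition~\ref{allformsarehumbert}, each of $q_{1,4}$ and $q_{2,4}$ is equivalent to $q_C$ for some genus~$2$ curve $C$; then Lemma~\ref{r4(q)} gives $r_4(q_{1,4})=12$ and $r_4(q_{2,4})=8$, and Table~\ref{Tab:autC} identifies the automorphism groups as $\GL_2(3)$ and $C_3\rtimes D_4$. The $(\Rightarrow)$ direction then follows in one line from the uniqueness of the curves with those groups. No classification via Corollary~\ref{Aut6_12_24forimprimitiveform} is needed at all.

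Your route through the classification can be made to work, but several points need repair. First, $|\GL_2(3)|=48$ and $|C_3\rtimes D_4|=24$, so by equation~(\ref{a(q)_Aut(C)}) one has $a(q_C)=24$ and $a(q_{C'})=12$, not $12$ and $6$; this actually strengthens your divisibility constraints and removes some of the case analysis. Second, your argument for imprimitivity is not valid as stated; the correct justification is Proposition~\ref{prop: Corollary_30}, which shows that a primitive ternary $q_C$ has $a(q_C)\in\{1,2,4,6\}$, so $a\ge 12$ forces $\cont(q_C)=4$. Third, your $(\Leftarrow)$ direction tacitly assumes that the unique curve with each automorphism group already has $q_C$ ternary; this is not automatic from the uniqueness statement alone and requires either an explicit check on the models $y^2=x(x^4-1)$ and $y^2=x^6-1$, or---more simply---Proposition~\ref{allformsarehumbert}, which is precisely what the paper uses to short-circuit the whole argument.
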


\begin{proof}
   We prove both assertions together since a similar argument works for both.

($\Leftarrow$) Note first from Proposition \ref{allformsarehumbert} that  $q_{1,4}\sim q_C$ and $q_{2,4}\sim q_{C'}$ for some genus 2 curves $C,C'.$ In addition, we know that $r_4(q_C)=r_4(q_{1,4})=12$ and $r_4(q_{C'})=r_4(q_{2,4})=8$ by Lemma \ref{r4(q)}. Thus, this direction of the assertions follows by Table \ref{Tab:autC}.
 
 ($\Rightarrow$)  While $\Aut(C)\simeq \GL_2(3),$ there is a unique genus 2 curve $C$  as was stated above. Similarly,  when $\Aut(C')\simeq C_3\rtimes D_4,$ there is a unique genus 2 curve $C'$. Therefore, this direction follows from the previous part for both assertions. 
\end{proof}

We are ready to prove more interesting parts of the assertions in Theorem \ref{firstmainresultPART2}.






\begin{prop} \label{classificationD6}
Let $q$ be a ternary form. Then there exists a genus $2$ curve $C$ with $\Aut(C)\simeq D_6$ such that $q\sim q_C$ if and only if there exists $c>4,$ with $c\equiv0,1\Mod{4}$  such that $q\sim q_{1,c}$ or $q\sim q_{2,c}.$
\end{prop}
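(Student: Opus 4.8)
The plan is to translate the group-theoretic condition into the single representation number $r_4(q_C)$ and then read off the admissible forms from the classifications already in hand. By (\ref{a(q)_Aut(C)}) and Table \ref{Tab:autC}, the condition $\Aut(C)\simeq D_6$ is equivalent to $a(q_C)=6$, and since the pairs $(a(q_C),r_4(q_C))$ occurring in Table \ref{Tab:autC} are pairwise distinct once the row $C_{10}$ is removed (it cannot occur for a ternary $q_C$ by Remark \ref{remark_C10}), this is in turn equivalent to $r_4(q_C)=6$. So throughout I work with the numerical condition $r_4(q_C)=6$, and I split every argument according to $\cont(q_C)\in\{1,4\}$, which by (\ref{content1or4}) are the only possibilities.

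For the direction ($\Leftarrow$) I must produce, for each listed form $q$, a genus $2$ curve $C$ with $q_C\sim q$ and $r_4(q_C)=6$. If $c\equiv1\Mod{4}$ then $q_{1,c}=f_1$ and $q_{2,c}=f_2$ are primitive, so Proposition \ref{all_primitiveforms_humbert} supplies such a $C$; as each form is Eisenstein reduced with $a=b=4$ and $(r,s,t)=(2,2,2)$ or $(0,0,-2)$, Proposition \ref{prop: Corollary_30} gives $r_4(q_C)=6$. If $c\equiv0\Mod{4}$ then $q_{1,c},q_{2,c}$ have content $4$ and are realized by Proposition \ref{allformsarehumbert}, while Lemma \ref{r4(q)} gives $r_4=6$ directly. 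In both cases $\Aut(C)\simeq D_6$, which simultaneously establishes the final ``addition'' clause.

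For ($\Rightarrow$) suppose $q_C$ is a realized invariant with $r_4(q_C)=6$. In the primitive case Proposition \ref{prop: Corollary_30} forces the reduced representative to satisfy $a=b=4$ with $(r,s,t)=(2,2,2)$ or $(0,0,-2)$, i.e. $q_C\sim q_{1,c}$ or $q_{2,c}$; primitivity makes $c$ odd, (\ref{0,1MOD4}) makes $c\equiv0,1\Mod{4}$, hence $c\equiv1\Mod{4}$, and $c=4$ is impossible (it would force content $4$), so $c>4$. In the imprimitive case $\cont(q_C)=4$, and since $a(q_C)=6$ divides $u:=|\Aut^+(q_C)|$ by (\ref{athetadividesAut}), we have $3\mid u$, whence $u\in\{6,12,24\}$ by Theorem \ref{propaut6_12_24}. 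Corollary \ref{Aut6_12_24forimprimitiveform} then reduces the possibilities to $q_{1,4},q_{4,4,4}$ (for $u=24$), $q_{2,c},q_{5,c}$ (for $u=12$), and $q_{1,c}$ (for $u=6$). Here $q_{1,4}$ (with $r_4=12$) and $q_{5,c}$ (a one-line count gives $r_4=2$, since for $4\mid c\geq8$ only $(\pm1,0,0)$ represent $4$) are discarded because their $r_4$ is not $6$, leaving exactly $q_{1,c}$ and $q_{2,c}$ with $4\mid c>4$, i.e. $c\equiv0\Mod{4}$.

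The one candidate that survives the $r_4$-count and must be excluded by other means is $q_{4,4,4}=4(x^2+y^2+z^2)$, which also has $r_4=6$; I expect this to be the crux. The point is that $q_{4,4,4}$ is \emph{not} realizable as a refined Humbert invariant at all: by (\ref{content1or4}) together with Propositions 15(ii) and 19 of \cite{Kir} and Theorem 20 of \cite{kani2014jacobians}, every content-$4$ Humbert invariant $q$ has $\tfrac12 q$ improperly primitive, whereas $\tfrac12 q_{4,4,4}=2(x^2+y^2+z^2)$ is merely imprimitive: its off-diagonal Dickson coefficients all vanish, so it equals $2$ times the classically integral form $x^2+y^2+z^2$ and can be halved within classically integral forms. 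Hence $q_C\not\sim q_{4,4,4}$, and the enumeration closes. The difficulty is exactly that this obstruction cannot be detected by $r_4$ and instead requires invoking the finer content-$4$ structure of \cite{Kir}.
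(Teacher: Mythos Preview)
Your proof is correct and follows essentially the same strategy as the paper: reduce to $r_4(q_C)=6$, split by content, and in the imprimitive case run through $u\in\{6,12,24\}$ via Corollary~\ref{Aut6_12_24forimprimitiveform}, excluding $q_{4,4,4}$ by the improper-primitivity criterion of \cite{Kir}. The only cosmetic differences are that the paper cites Proposition~28 of \cite{cas} as a black box for the primitive $(\Rightarrow)$ direction (whereas you rederive it from Proposition~\ref{prop: Corollary_30}), and the paper first establishes $b=4$ from $r_4\neq2$ so that $q_{5,c}$ never enters the picture, while you eliminate $q_{5,c}$ afterwards by a direct $r_4$-count; both routes are fine.
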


\begin{proof}
    ($\Rightarrow$) If $q$ is a primitive form, this part directly follows from Proposition 28 of \cite{cas}. We henceforth suppose that $q$ is imprimitive, and so $\cont(q)=4$ by equation (\ref{content1or4}). Thus, we see that $q$ is an integral quadratic form in the sense of Dickson. By replacing $q$ by an equivalent form, we may assume that $q$ is an Eisenstein reduced form; cf.\ Theorem 103 of \cite{dicksonsbook}. Let us put $q=[a,b,c,2r,2s,2t].$ Note that $a$ is the minimum of the form $q$ and $b$ is the second minimum of the form $q$; cf.\ Theorem 101 of \cite{dicksonsbook}. We know from Table \ref{Tab:autC} that $a(q)=r_4(q)=6.$ Thus, since $\cont(q)=4$ and $r_4(q)>0,$ it follows that $a=4.$ Since $b$ is the second minimum of $q,$ we also see that \begin{equation}\label{r_4(q)=2}
    r_4(q)=2 \Leftrightarrow a=4 \textrm{ and } b\neq4.
\end{equation} Therefore, it follows that $b=4.$ Moreover, since $a(q)=6,$ we have from equations (\ref{athetadividesAut}) and (\ref{Autf}) that $u:=|\Aut^+(q)|\in\{6,12,24\}.$  If $u=24,$ then we would have that $q=q_{4,4,4}$ by Corollary \ref{Aut6_12_24forimprimitiveform} and Lemma \ref{r4(q)} (since $r_4(q)=6$). However, since $\frac{1}{2}q_{4,4,4}$ is not improperly primitive (see \cite[p.~7]{dicksonsbook} for the definition of this concept), $q_{4,4,4}$ cannot be a refined Humbert invariant by Theorem 2 of \cite{Kir}, so this case is not possible. If $u=12,$ then we obtain from Corollary \ref{Aut6_12_24forimprimitiveform} and Lemma \ref{r4(q)} again that there exists $c>4,\ 4\mid c,$ such that $q=q_{2,c}.$

We henceforth suppose that $u=6,$ and so by Corollary \ref{Aut6_12_24forimprimitiveform} it follows that there exists 
$c>4,\ 4\mid c,$ such that $q=q_{1,c},$ so this part follows.

($\Leftarrow$) Let $c>4.$ If $4\mid c$ or $c\equiv1\Mod{4},$ then we see from Propositions \ref{allformsarehumbert} and  \ref{all_primitiveforms_humbert}  that 
there exists a genus 2 curve $C$ such that $q\sim q_C.$ By Lemma \ref{r4(q)}, we have that $r_4(q_{1,c})=6$ for $4\mid c>4$ and that $r_4(q_{2,c})=6,$ for $c>4.$ Also, since $q_{1,c}$ is reduced for $c\geq4$ as was discussed above, we can see from Proposition \ref{prop: Corollary_30} that $a(q_{1,c})=6.$ Hence, we have that $\Aut(q_C)\simeq D_6$ by Table \ref{Tab:autC}, and this part follows.
\end{proof}

We now introduce a notation, which will be used below. Let  $Q_c=\{q_{4,4,c}, q_{6,c}, q_{3,c}\},$  if $c\equiv1\Mod{8},$ and let $Q_c=\{q_{4,4,c}, q_{6,c}\},$ if $c\equiv5\Mod{8},$ and let  $Q_c=\{q_{3,c}, q_{6,c}\},$  if $c\equiv0\Mod{4},$ with $c>4,$ and let $Q_4=\{q_{1,4}, q_{2,4}\}.$

\begin{prop} 
\label{classificationD4} Let $q$ be a ternary form. Then there exists a genus $2$ curve $C$ with $\Aut(C)\simeq D_4$ such that $q\sim q_C$ if and only if there exists $c>4,\ c\equiv0, 1\Mod{4}$ such that $q$ is equivalent to a form in the set $Q_c.$ 

\end{prop}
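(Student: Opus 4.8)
The plan is to mirror the structure of the proof of Proposition \ref{classificationD6}, splitting into the forward and backward directions and, within the forward direction, into the primitive and imprimitive cases. By Table \ref{Tab:autC}, the condition $\Aut(C)\simeq D_4$ is equivalent to $a(q_C)=r_4(q_C)=4$. For the backward direction ($\Leftarrow$), I would first invoke Propositions \ref{allformsarehumbert} and \ref{all_primitiveforms_humbert} to guarantee that each form in $Q_c$ is equivalent to some $q_C$; then I would compute $r_4$ of each representative and check it equals $4$. For $q_{3,c}$ this is exactly Lemma \ref{r4(q)}(iii), giving $r_4(q_{3,c})=4$ for $4\mid c>4$; for $q_{4,4,c}$ and $q_{6,c}$ I would need the analogous values of $r_4$ (either already tabulated or computed directly, as in the explicit decompositions used in Lemma \ref{r4(q)}). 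Combined with $a(q_C)=r_4(q_C)$ for these forms (via Proposition \ref{prop: Corollary_30} in the primitive case and via equation (\ref{a(q)_Aut(C)}) together with Table \ref{Tab:autC} in general), Table \ref{Tab:autC} then forces $\Aut(C)\simeq D_4$.

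For the forward direction ($\Rightarrow$), I would separate the primitive and imprimitive cases. If $q\sim q_C$ is primitive, Proposition \ref{prop: Corollary_30} with $(a(q_C),r_4(q_C))=(4,4)$ characterizes the reduced form by the ``otherwise'' branch, i.e.\ $a=4$ with either $b\neq 4$ or ($b=4$ but $(r,s,t)\notin\{(2,2,2),(0,0,-2)\}$); I would match these reduced shapes against the representatives $q_{3,c}$ ($c\equiv1\Mod 8$), $q_{4,4,c}$, and $q_{6,c}$ listed in $Q_c$, using the congruence condition (\ref{0,1MOD4}) to pin down the residue class of $c$ modulo $8$. If instead $q$ is imprimitive, then $\cont(q)=4$ by (\ref{content1or4}), so $q$ is integral in Dickson's sense and I may take it Eisenstein reduced (Theorem 103 of \cite{dicksonsbook}), with leading coefficient $a=4$ since $r_4(q)>0$. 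Because $a(q)=4$ divides $|\Aut^+(q)|$ by (\ref{athetadividesAut}) and (\ref{Autf}), we get $u\in\{4,8,12,24\}$; I would eliminate $u=12,24$ using Corollary \ref{Aut6_12_24forimprimitiveform} and Lemma \ref{r4(q)} (those forms have $r_4\neq 4$), leaving $u\in\{4,8\}$, and then apply Theorem \ref{LemAut4_8} to list the candidate reduced forms.

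The hard part will be the bookkeeping in this imprimitive subcase: Theorem \ref{LemAut4_8}(i),(ii) produces several families for $u=8$ and $u=4$ (the forms $q_{3,c}$, $q_{4,4,c}$, $q_{4,b,b}$, $q_1,\dots,q_4$, etc.), and I must discard every candidate whose $r_4$ is not $4$ and every candidate that fails to be a refined Humbert invariant. The latter is controlled by Theorem 2 of \cite{Kir}: a form of content $4$ is equivalent to some $q_C$ iff its half is improperly primitive, which (as in the exclusion of $q_{4,4,4}$ in Proposition \ref{classificationD6}) rules out the diagonal families $q_{4,b,c}$ and $q_{4,b,b}$ arising in Theorem \ref{LemAut4_8} because $\frac12 q$ is then properly primitive. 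After these eliminations the only survivors with content $4$ should be $q_{3,c}$ and $q_{6,c}$ (with $4\mid c>4$), exactly matching $Q_c$ for $c\equiv 0\Mod 4$. The delicate points are (i) verifying that the reduced forms $q_{3,c},q_{6,c}$ really have $r_4=4$ rather than $6$ or $8$, and (ii) checking the improper-primitivity criterion consistently across all the Theorem \ref{LemAut4_8} families so that nothing spurious is either kept or dropped; I expect this case analysis, rather than any single deep idea, to be where the work concentrates.
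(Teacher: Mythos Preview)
Your overall strategy matches the paper's, but there are two concrete issues.

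First, your reading of the ``otherwise'' branch of Proposition~\ref{prop: Corollary_30} is wrong: $(a(q_C),r_4(q_C))=(4,4)$ forces $a=b=4$ with $(r,s,t)\notin\{(2,2,2),(0,0,-2)\}$. The case $a=4,\ b\neq 4$ gives $(2,2)$, not $(4,4)$. This is a helpful error to fix, because $b=4$ is exactly the constraint you need to prune the lists from Theorem~\ref{LemAut4_8}.

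Second, your primitive $(\Rightarrow)$ plan is vague where the paper is explicit. You propose to ``match reduced shapes'' against $Q_c$ using congruence~(\ref{0,1MOD4}), but you do not say how you will pin down $(r,s,t)$ from among all reduced forms with $a=b=4$. The paper treats the primitive and imprimitive cases uniformly: it first proves the claim $u:=|\Aut^+(q)|\in\{4,8\}$ (equation~(\ref{proof_claim_classification_D_4})), eliminating $u=12,24$ in the primitive case via Theorem~\ref{propaut6_12_24} (not via Corollary~\ref{Aut6_12_24forimprimitiveform}, which needs $\cont(q)=4$), and then feeds $a=b=4$ into Theorem~\ref{LemAut4_8} in both cases. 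With $b=4$ known, Theorem~\ref{LemAut4_8}(i) yields only $q_{3,c}$ and $q_{4,4,c}$ for $u=8$, and Theorem~\ref{LemAut4_8}(ii) yields only $q_{6,c}$ for $u=4$ (the families $q_1,\dots,q_4$ and $q_{4,b,c}$ all require $b>4$, so they drop out immediately; there is no need to compute $r_4$ for each or to invoke the improper-primitivity criterion except to discard $q_{4,4,c}$ in the imprimitive subcase). Your plan to sift all Theorem~\ref{LemAut4_8} families by $r_4$ and by Theorem~2 of \cite{Kir} would eventually work, but it is more labor than needed once you record $b=4$ up front.
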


\begin{proof}  We first claim that \begin{equation} \label{proof_claim_classification_D_4}
    q\sim q_C, \textrm{ with } \Aut(C)\simeq D_4 \Rightarrow u:=|\Aut^+(q)|=4 \textrm{ or }8.
\end{equation}
To prove the claim, assume that $q\sim q_C,$ with $\Aut(C)\simeq D_4.$ In this case,  we know from Table 1 that $a(q)=r_4(q)=4.$ By equation (\ref{content1or4}), we have that $\cont(q)=1$ or $4.$ In the latter case, it is clear that it is a quadratic form in the sense of Dickson, and in the former case, it is also such a form by Proposition \ref{prop: Corollary_30}. Hence, we may assume that $q$ is an Eisenstein reduced form as in the proof of Proposition \ref{classificationD6}. Since $r_4(q)=4,$ we can write that $q=[4,4,c,2r,2s,2t]$ (cf.\ equation (\ref{r_4(q)=2})). Also, since $a(q)=4,$ we have that $u:=|\Aut^+(q)|\in\{4,8,12,24\}$ by equations (\ref{athetadividesAut}) and (\ref{Autf}). Note that the conditions in equation (12) hold for $q$.

We now see that when $q$ is imprimitive, we have that $u\neq 12$ and $u\neq 24$ by Corollary \ref{Aut6_12_24forimprimitiveform} and Lemma \ref{r4(q)}. We henceforth suppose that $q$ is primitive. We first note that $c$ is odd in this case. This implies that $u\neq 24$ by Theorem \ref{propaut6_12_24}. Next, if $u=12,$ then we would obtain that $t=-2$ and $r=s=0$ by the same theorem, and this implies from Proposition \ref{prop: Corollary_30} that $a(q)=6,$ which is not possible. So, equation (\ref{proof_claim_classification_D_4}) follows.

($\Rightarrow$) We first suppose that $q$ is imprimitive. By what was discussed above, we have that  $a(q)=r_4(q)=4,$ and so we may write  $q=[4,4,c,2r,2s,2t].$ Moreover, we have from equation (\ref{proof_claim_classification_D_4}) that $u=8\textrm{ or }4.$ Suppose that $u=8.$ Since $q$ is a positive reduced form and $\cont(q)=4,$ we can apply Theorem  \ref{LemAut4_8}, and we obtain from this result that $q= q_{3,c}$ or $q= q_{4,4,c},$ where $4\mid c>4.$ But, since $\frac{1}{2}q_{4,4,c}$ is not improperly primitive, it cannot be equivalent to a refined Humbert invariant by Theorem 2 of \cite{Kir}, and so this part follows when $u=8.$ 

Suppose next that $u=4.$ Similarly, we can apply Theorem \ref{LemAut4_8}(ii). Since $q(0,1,0)=4,$ we can immediately eliminate the forms having the condition that $b>4$ in this result, and we obtain precisely one form (whose content is 4) $q_{6,c},$ with $4\mid c>4$ (when $b=4$ in the notation of $q_3,$ we have $q_3=q_{6,c}$). Thus this part follows in this case.

We next assume that $q$ is primitive.  By what was discussed above, we have that $a(q)=r_4(q)=4$ and that we may write  $q=[4,4,c,2r,2s,2t],$ where $c$ is odd. Moreover, we have from equation (\ref{proof_claim_classification_D_4}) that $u=8\textrm{ or }4.$ Since $q\sim q_C,$ we have that $q\equiv0,1\Mod{4}$ by equation (\ref{0,1MOD4}), and thus Theorem \ref{LemAut4_8} can be applied.

Suppose first that $u=8.$ By Theorem \ref{LemAut4_8}(i), we have that $q=q_{3,c},$  or $q= q_{4,4,c},$ where $c\equiv1\Mod{4},\ c>4$ since the other forms in this result cannot be primitive in our case. Next, assume that $u=4.$ By Theorem \ref{LemAut4_8}(ii), we see by a similar reason which was discussed in the previous part that $q= q_{6,c},$ where $c\equiv1\Mod{4},\ c>4.$ Hence, this part follows.

($\Leftarrow$) For $c>4,\ 4\mid c,$ we see from Proposition \ref{allformsarehumbert} that 
there exists a genus 2 curve $C$ such that $q\sim q_C.$ For $c, c'>1,$ with $c\equiv1\Mod{4}$ and $c'\equiv1\Mod{8},$ it follows from Proposition \ref{all_primitiveforms_humbert} that there exists genus 2 curve $C$ such that $q\sim q_C.$  We know from Lemma \ref{r4(q)} that $r_4(q_{3,c})=4,$ for $c>4.$ 

We also claim that $r_4(q_{6,c})=4,$ for $c>4.$ To see this, observe that $$q_{6,c}(x,y,z)=4x^2+4y^2+cz^2-4xz=2(x-z)^2+2x^2+4y^2+(c-2)z^2.$$ Since $c-2>4,$ if $(x,y,z)\in R_4(q_{6,c}),$ then $z=0,$ and thus $R_4(q_{6,c})=\{(x,y,0): (x,y)\in R_1(x^2+y^2)\}.$ Thus, the claim follows. Lastly, it is clear that $r_4(q_{4,4,c})=4,$ for $c>4.$ 
Hence, we have that $\Aut(q_C)\simeq D_4$ by Table \ref{Tab:autC}, and the assertions follow.
\end{proof}

\begin{prop} 
\label{classificationC2_C2}
Let $q$ be a positive ternary  form.
\sps

\emph{(i)} If $q$ is imprimitive, then there exists a genus $2$ curve $C$ with $\Aut(C)\simeq C_2\times C_2$ such that $q\sim q_C$ if and only if $q\sim q_{4,c},$ with $4\mid c>4$ or $q\sim q_i,$ for $1\leq i\leq7$  provided that $q_i(0,1,0)\neq4,$ for $i\in\{3,5,6,7\}.$ 
\spm

\emph{(ii)} If $q$ is primitive, and if $q\sim q_C,$ for some curve $C$ of genus $2$ with  $\Aut(C)\simeq C_2\times C_2,$ then  $q$ is equivalent to $[4,b,b,2,4,4],$ with $b>1,\ b\equiv1\Mod{4}$ or $q$ is equivalent to one of the forms listed in  \emph{Theorem \ref{LemAut4_8}(ii) and (iii)} with $b\neq 4.$ 
\end{prop}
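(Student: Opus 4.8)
The plan is to treat the two parts of Proposition~\ref{classificationC2_C2} by the same overall strategy used for $D_4$ and $D_6$: translate the condition $\Aut(C)\simeq C_2\times C_2$ into the numerical data $a(q)=r_4(q)=2$ via Table~\ref{Tab:autC}, then reduce $q$ to an Eisenstein reduced form and sieve the candidate forms through the structural lists of Theorems~\ref{propaut6_12_24} and~\ref{LemAut4_8}, finally checking which survivors actually arise as refined Humbert invariants using Propositions~\ref{allformsarehumbert} and~\ref{all_primitiveforms_humbert} together with the representability criterion~(\ref{thetaisirreducible}). First I would record that $\Aut(C)\simeq C_2\times C_2$ forces $a(q)=2$, hence by~(\ref{athetadividesAut}) and~(\ref{Autf}) the order $u:=|\Aut^+(q)|$ lies in $\{2,4,8,\dots\}$ with $2\mid u$, and that $r_4(q)=2$ means, by the minimum-of-a-reduced-form characterization (\ref{r_4(q)=2}), exactly that the leading coefficient equals $4$ while the second minimum $b\neq 4$. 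This single observation $b\neq 4$ is what distinguishes the $C_2\times C_2$ case from the $D_4$ case and is the key numerical hook for the whole proof.

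For part~(i), where $q$ is imprimitive, I would invoke~(\ref{content1or4}) to get $\cont(q)=4$, pass to a reduced representative, and note $a=4$, $b>4$. I would then separately discard $u=24,12,6$ using Corollary~\ref{Aut6_12_24forimprimitiveform} and Lemma~\ref{r4(q)}: each of those orders forces $b=4$ (all the imprimitive forms of large automorphism order listed there have $b=4$), contradicting $r_4(q)=2$. This leaves $u=4$ or $u=2$, so Theorem~\ref{LemAut4_8}(ii),(iii) applies and produces exactly the forms $q_{4,c}$ (the diagonal $q_{4,b,c}$ with $b>4$ specializing when $\cont=4$) together with the $q_1,\dots,q_7$, subject to the side condition that those $q_i$ which could have $b=4$ (namely $i\in\{3,5,6,7\}$, where $q_i(0,1,0)=4$ is possible) must be excluded when $q_i(0,1,0)=4$, since $b=4$ would push $r_4$ up to at least $4$. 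The forward implication then records that each such form is a genuine $q_C$ by Proposition~\ref{allformsarehumbert} (these all have a nondiagonal coefficient $\pm4$ and content $4$, so $\tfrac12 q$ is improperly primitive and Theorem~2 of~\cite{Kir} applies), while the reverse direction checks $r_4=2$ for each form by an explicit representation count in the style of Lemma~\ref{r4(q)}, concluding via Table~\ref{Tab:autC}.

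For part~(ii), where $q$ is primitive, I would again set $a(q)=r_4(q)=2$, so $q$ reduces to $[4,b,c,\dots]$ with $b>4$ and, by Proposition~\ref{prop: Corollary_30} (which equates the primitive case $a=4,\,b\neq a$ with $(a(q),r_4(q))=(2,2)$), this is consistent. Using~(\ref{0,1MOD4}) I can apply Theorem~\ref{LemAut4_8}, and since $u\in\{2,4\}$ cannot equal $8$ here (the $u=8$ forms of part~(i) of that theorem all have $b=4$), the surviving primitive candidates are precisely $[4,b,b,2,4,4]$ together with the forms in Theorem~\ref{LemAut4_8}(ii),(iii) having $b\neq4$; the congruence $b\equiv1\Mod 4$ on the first family is forced by~(\ref{0,1MOD4}) applied at $(0,1,0)$. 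Note this part is stated as a one-directional necessary condition only, so no converse representability argument is required.

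The main obstacle I anticipate is the careful bookkeeping of the side conditions $q_i(0,1,0)\neq 4$ in part~(i): among $q_1,\dots,q_7$ the forms with $i\in\{3,5,6,7\}$ genuinely admit $b=4$, and I must verify that each such specialization either coincides with a form already classified under $D_4$ (so must be excised here) or violates $r_4=2$. The rest is routine sieving against Dickson's tables and explicit representation counts, but this boundary case between the $C_2\times C_2$ and $D_4$ strata is exactly where the classification is delicate, and it is the reason the statement of part~(i) carries the explicit proviso on $i\in\{3,5,6,7\}$ rather than a clean uniform description.
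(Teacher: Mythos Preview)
Your overall strategy matches the paper's exactly, but there are two concrete errors in the sieving that leave real gaps.

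First, in part~(i) you claim that each of $u\in\{6,12,24\}$ ``forces $b=4$.'' This is false for $u=12$: Corollary~\ref{Aut6_12_24forimprimitiveform}(ii) lists not only $q_{2,c}$ (which has $b=4$) but also $q_{5,c}=[4,c,c,-c,0,0]$ with $4\mid c>4$, which has $b=c>4$. The paper disposes of $q_{5,c}$ by a different mechanism: $\tfrac12 q_{5,c}$ is \emph{not} improperly primitive, so Theorem~2 of~\cite{Kir} shows it cannot be a refined Humbert invariant. You never invoke this.

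Second, you also skip $u=8$ in part~(i) and mishandle it in part~(ii). Theorem~\ref{LemAut4_8}(i) lists four forms with $u=8$: the two with $b=4$ are $q_{3,c}$ and $q_{4,4,c}$, but the other two, $q_{4,b,b}$ and $[4,b,b,2,4,4]$, have $b>4$. In the imprimitive case the paper rules out $q_{4,b,b}$ because it is diagonal (again Theorem~2 of~\cite{Kir}), and $[4,b,b,2,4,4]$ is excluded automatically since its content is at most $2$, never $4$. In the primitive case your assertion ``the $u=8$ forms \dots\ all have $b=4$'' is simply wrong and is internally inconsistent with your own conclusion: $[4,b,b,2,4,4]$ is precisely a $u=8$ form with $b>4$ that \emph{survives} (it appears in the statement of part~(ii) for exactly this reason), while $q_{4,b,b}$ with $b>4$ must be eliminated via~(\ref{0,1MOD4}) because $q_{4,b,b}(0,1,1)=2b\equiv 2\Mod 4$. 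The paper carries out this case analysis explicitly; your argument as written does not establish that the list in~(ii) is complete.
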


\begin{proof} As in the proof of Proposition \ref{classificationD4}, we first claim that
\begin{equation} \label{proof_claim_classification_C_2}
    q\sim q_C, \textrm{ with } \Aut(C)\simeq C_2\times C_2 \Rightarrow u:=|\Aut^+(q)|=2, 4 \textrm{ or }8.
\end{equation}
To prove the claim, assume that $q\sim q_C,$ with  $\Aut(C)\simeq C_2\times C_2.$  As in the proof of equation (\ref{proof_claim_classification_D_4}), we see from Table 1 that $a(q)=r_4(q)=2$ and we may assume that $q$ is an Eisenstein reduced form. Since $r_4(q)=2,$ we can write that $q=[4,b,c,2r,2s,2t],$ with $b\neq 4$ (cf.\ equation (\ref{r_4(q)=2})). Since $q\equiv0,1\Mod{4}$ by equation (\ref{0,1MOD4}), we can apply equation (\ref{Autf}), which shows that $u:=|\Aut^+(q)|\in\{2,4,6,8,12,24\}$ by equation (\ref{athetadividesAut}). 

Assume first that $q$ is primitive, and so $b$ or $c$ is odd. Since $b\neq 4$ and since $b$ or $c$ is odd, we obtain that $u\neq 6, 12\textrm{ and }24$ by Theorem \ref{propaut6_12_24}.
 
We henceforth suppose $q$ is imprimitive. Since $b\neq4,$ we  easily see by Corollary \ref{Aut6_12_24forimprimitiveform} that $u\not\in\{6,24\}.$ In addition, by Corollary \ref{Aut6_12_24forimprimitiveform}, we have that if $u=12,$ then $q=q_{5,c},$ with $4\mid c>4.$ However, since $\frac{1}{2}q_{5,c}$ is not improperly primitive, this is a contradiction by Theorem 2 of \cite{Kir}, and so equation (\ref{proof_claim_classification_C_2}) follows.

We now start to prove the first assertion (i).

($\Rightarrow$)  By what was discussed above, we have that  $a(q)=r_4(q)=2,$ and so we may write  $q=[4,b,c,2r,2s,2t],$ with $b\neq4.$ Moreover, we have from equation (\ref{proof_claim_classification_C_2}) that $u=2, 4\textrm{ or }8.$ As in the proof of Proposition \ref{classificationD4}, since $q$ is reduced positive form and $\cont(q)=4,$ we can apply Theorem  \ref{LemAut4_8}. Thus, we see that $u\neq8$ since $b\neq 4$ and since we know from Theorem 2 of \cite{Kir} that $q$ cannot be a diagonal form.  Thus, we are left with the cases $u=2\textrm{ and } 4$ to analyze.

Suppose first that $u=4.$ By Theorem \ref{LemAut4_8} again, we have that\\ $q\in\{q_1,q_2,q_3,q_4,q'\},$ where  $q'=q_{4,c},$ with $c>4$ since $\cont(q)=4$ and $\frac{1}{2}q$ is improperly primitive.
By equation (\ref{r_4(q)=2}), we can see that $r_4(q')=2$ and $r_4(q_i)=2,$ for $i=1, 2, 4,$ and also we see that $r_4(q_3)=2$ if $q_3(0,1,0)=b\neq 4.$

We next assume that $u=2.$ By  Theorem \ref{LemAut4_8} again, it follows that $q\in\{q_5,q_6,q_7\}.$ We again have that when $b>4,$ $r_4(q_i)=2,$ for $i=5, 6, 7.$ Thus, this part follows.

($\Leftarrow$) First, observe from Proposition \ref{allformsarehumbert} that 
there exists a genus 2 curve $C$ such that $q\sim q_C$ for each case. Moreover,  since $b\neq4$ in each case, we see that $r_4(q_i)=2,$ for $1\leq i\leq 7$ and that $r_4(q_{4,c})=2,$ where $c>4$ by equation (\ref{r_4(q)=2}). Therefore, we obtain from Table \ref{Tab:autC} that $\Aut(C)\simeq C_2\times C_2,$ so this part follows.

We next prove the second assertion (ii). By what was discussed above, we have that $a(q)=r_4(q)=2$ and that we may write  $q=[4,b,c,2r,2s,2t],$ with $b\neq 4.$ Furthermore, we know from equation (\ref{proof_claim_classification_C_2}) that $u=2, 4 \textrm{ or }8.$ Since $q\sim q_C,$  we can apply Theorem \ref{LemAut4_8} as in the proof of Proposition \ref{classificationD4}.

Suppose first that $u=8.$ By Theorem \ref{LemAut4_8}, (since $b\neq4$) we have that  $q=q_{4,b,b},$  or $q=[4,b,b,2,4,4],$ where $b\equiv1\Mod{4},\ b>4.$ But, the first case is not possible since it clearly  represents a number $n\equiv2\Mod{4},$ which contradicts with equation (\ref{0,1MOD4}).

Suppose next that $u=2 \textrm{ or }u=4.$  By Theorem \ref{LemAut4_8}, we have that $q$ is one of the forms listed in Theorem \ref{LemAut4_8}(ii) and (iii) with $b\neq 4.$ Thus, the second assertion follows.
\end{proof}

We are now ready to prove Theorem \ref{firstmainresultPART2}.
\spm

\noindent\textit{Proof of} Theorem \ref{firstmainresultPART2}. As was mentioned in \S\ref{refinedhumbert}, we have that $(A,\theta)\simeq (J_C,C),$ where $\theta=\cl(C).$ It follows that $A\sim E\times E,$ for some CM elliptic curve $E$ by what was mentioned in Remark \ref{remark_C10}. So, the first assertion follows.

We now prove the second assertion. We first prove this assertion in the primitive case, so suppose that  $q_C$ is a primitive ternary form.  For the necessity part, assume that $|\Aut(C)|>2.$ Since $q_C$ is primitive, we see that $u:=|\Aut(C)|=4,8 \textrm{ or }12$ by Proposition \ref{prop: Corollary_30} and equation (\ref{a(q)_Aut(C)}). 
 If $u=12,$ then this part follows by Proposition \ref{classificationD6} and Table \ref{Tab:autC}. If $u=4\textrm{ or }8,$ this part follows respectively by Propositions \ref{classificationC2_C2}(ii) and \ref{classificationD4}. The sufficiency part also follows from those results.

We henceforth suppose that $q_C$ is imprimitive. We observe that if $|\Aut(C)|>2,$  then we have that $u:=|\Aut(C)|=4, 8, 12, 24 \textrm{ or }48;$ cf.\ Table \ref{Tab:autC} and Remark \ref{remark_C10}. Thus the necessity and sufficiency parts  follow from Propositions  \ref{autgl3}, \ref{classificationD6}, \ref{classificationD4}(i) and \ref{classificationC2_C2}(i). The last assertion directly follows from Propositions \ref{allformsarehumbert} and \ref{all_primitiveforms_humbert}. \newline

Moreover, from the results above we can deduce the following useful table of the automorphism groups $\Aut(C)$ for all possible imprimitive ternary forms $q_C.$ It also provides the explicit information for the formulas $a(q_C)$ and $k(q_C).$ The second column contains all possible imprimitive ternary forms of reduced forms $q_C$ which was listed in Theorem \ref{firstmainresultPART2}.  Indeed, all forms in the first seven rows are reduced by Lemma \ref{reducedforms_q7}, and the rest of those forms are reduced by the discussion in \S\ref{ternaryquadraticforms}. The third column follows from Theorems \ref{propaut6_12_24} and \ref{LemAut4_8}. Clearly, the fourth column follows from Table \ref{Tab:autC}, and the last column follows from the third and fourth columns.

\begin{table}[H]
\centering 
  \begin{threeparttable}
    \caption{Imprimitive ternary forms $q_C$ corresponding to $\Aut(C)$ when $a(q_C)\neq1$.}{\label{Tab:k(C)}}
     \begin{tabular}{|c|c|c|c|c|} 
 \hline
  $\Aut(C)$ &   $q_C$ & $|\Aut^+(q_C)|$ & $a(q_C)$ & $k(q_C)$\\ [0.5ex] \hline
  
  $C_2\times C_2$ & $[4,b,c,4,4,4]$ with $b\neq c$ &2&2&1 \\
  & $[4,b,c,2r,0,-4]$ with $r<0$ &2&2&1\\
  & $[4,b,c,2r,-4,0]$ with $r<0$&2&2&1 \\
    &  $[4,b,c,0,0,-4]$&4&2&2 \\
    & $[4,b,b,2r,4,4]$ with $r>0$&4&2&2\\
    &$[4,b,c,0,-4,0]$ with $b\neq c$&4&2&2\\
    &$[4,b,c,-b,-4,0]$ with $\ b\neq c$&4&2&2\\
    &$[4,c,c,-4,0,0]$ &4&2&2\\\hline
   $D_4$  &$[4,4,c,0,-4,0]$ &4 &4&1 \\
   &$[4,4,c,-4,-4,0]$ &8 &4&2\\   \hline
    $D_6$ &  $[4,4,c,4,4,4]$ &6&6&1\\
    &$[4,4,c,0,0-4]$ &12&6&2\\ \hline 
     $C_3\rtimes D_4$ & $[4,4,4,0,0-4]$ &12&12 &1\\ \hline
      $\GL_2(3)$ &  $[4,4,4,4,4,4]$ & 24 &24&1\\ \hline
\end{tabular}

    \begin{tablenotes}
      \small
      \item \textbf{Notation:} In the table, we suppose that $|2r|\leq b,$\ $4\mid b, c, 2r$ and $4<b\leq c.$ In addition, we suppose that if $r<0,$ then $b\neq -2r.$ Also, $k(q_C)=|\Aut^+(q_C)|/a(q_C)$ as was mentioned in the introduction.
    \end{tablenotes}
  \end{threeparttable}
\end{table}

Like Table \ref{Tab:k(C)}, we have the following table for all primitive ternary forms $q_C$ with $a(q_C)>2$. Similarly, the second column contains all possible primitive reduced ternary forms $q_C$  which was listed in Theorem \ref{firstmainresultPART2}.  The third column follows from Theorems \ref{propaut6_12_24} and \ref{LemAut4_8}. The fourth column follows by Table \ref{Tab:autC}, and the last column follows from the third and fourth columns. Note that the table does not contain the forms $q_C$ with $a(q_C) = 2$ even though those forms were discussed in Proposition \ref{classificationC2_C2}(ii). The reason why it was not added to the table is that we don't have the converse of Proposition \ref{classificationC2_C2}(ii).  To give an explicit list for which the forms listed in Proposition \ref{classificationC2_C2}(ii) are equivalent to a refined Humbert invariant seems hard. However, for a given exact form in this list, it is easy by using the main result in \cite{refhum}.

\begin{table}[H]\centering 
  \begin{threeparttable}
    \caption{Primitive ternary forms $q_C$ corresponding to $\Aut(C)$ when $a(q_C)>2$.}{\label{Table: primtiive_case}}
     \begin{tabular}{|c|c|c|c|c|} 
 \hline
  $\Aut(C)$ &   $q_C$ & $|\Aut^+(q_C)|$ & $a(q_C)$ & $k(q_C)$\\ [0.5ex] \hline
  $D_4$ &$[4,4,c,0,-4,0]$ &4 &4&1\\ 
    &$[4,4,c,0,0,0]$ &8 &4&2 \\
   &$[4,4,c',-4,-4,0]$ &8 &4&2\\  
     \hline
    $D_6$ &  $[4,4,c,4,4,4]$ &6&6&1\\
    &$[4,4,c,0,0-4]$ &12&6&2\\ \hline 
\end{tabular}

    \begin{tablenotes}
      \small
      \item \textbf{Notation:} In the table, we suppose that $4 < c, c'$, \text{ and } $c\equiv 1\Mod{4}$ and $c'\equiv 1\Mod{8}$.
    \end{tablenotes}
  \end{threeparttable}
\end{table}

Note that Table \ref{Tab:k(C)} immediately implies that $k(q_C)\leq 2,$ for imprimitive ternary forms $q_C$ with $a(q_C)\neq 1.$ Similarly, it follows from Table \ref{Table: primtiive_case} that $k(q_C)\leq 2,$ for primitive ternary forms $q_C$ with $a(q_C)> 2$.  This is also true for primitive ternary forms $q_C$  with an exact exceptional case when $a(q_C) = 2$ as follows.

\begin{prop}
\label{theoremk(q)}
Suppose that $q_C$ is a ternary form. If $a(q_C)\neq 1$ and if $q_C$ is not equivalent to a form $q:=[4,b,b,2,4,4],$ for some $b\equiv1\Mod{4},$ then $k(q_C)\leq2.$ Moreover, if $q_C\sim q,$ then $k(q_C)=4.$ 
\end{prop}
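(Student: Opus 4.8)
The plan is to reduce everything to the classification already obtained and to the order computations of Section \ref{ternaryquadraticforms}. Recall (cf.\ the introduction and the footnote of Table \ref{Tab:k(C)}) that $k(q_C)=|\Aut^+(q_C)|/a(q_C)$, so the assertion amounts to comparing $|\Aut^+(q_C)|$ with $2a(q_C)$. Since $a(q_C)\neq 1$ is equivalent to $|\Aut(C)|>2$ by the relation $|\Aut(C)|=2a(q_C)$ of Remark \ref{remark_C10}, the form $q_C$ must be one of the forms enumerated in Theorem \ref{firstmainresultPART2}, and for each of them both $a(q_C)$ and $|\Aut^+(q_C)|$ have already been determined. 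Thus the proof is essentially a bookkeeping read-off, with a single genuinely exceptional case to isolate.

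First I would dispose of the cases recorded in Tables \ref{Tab:k(C)} and \ref{Table: primtiive_case}. These list, respectively, every imprimitive form $q_C$ with $a(q_C)\neq 1$ and every primitive form $q_C$ with $a(q_C)>2$, together with $|\Aut^+(q_C)|$, $a(q_C)$ and $k(q_C)$. Reading off the last column gives $k(q_C)\leq 2$ in all these cases. Moreover none of these forms is equivalent to $[4,b,b,2,4,4]$: the imprimitive forms have $\cont=4$, whereas $[4,b,b,2,4,4]$ is primitive (its content is $\gcd(4,b,2)=1$ since $b\equiv 1\Mod{4}$), and the primitive forms of Table \ref{Table: primtiive_case} have $a(q_C)>2$, while $[4,b,b,2,4,4]$ has the equivalence invariant $a(q_C)=2$. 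This settles the inequality except for primitive forms with $a(q_C)=2$, i.e.\ those with $\Aut(C)\simeq C_2\times C_2$.

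For this remaining case I would invoke Proposition \ref{classificationC2_C2}(ii): a primitive $q_C$ with $\Aut(C)\simeq C_2\times C_2$ is equivalent either to $[4,b,b,2,4,4]$ with $b\equiv 1\Mod{4}$, or to one of the forms listed in Theorem \ref{LemAut4_8}(ii),(iii) with $b\neq 4$. In the latter family Theorem \ref{LemAut4_8} gives $u:=|\Aut^+(q_C)|\in\{2,4\}$, whence $k(q_C)=u/2\leq 2$. For the exceptional form one checks via Proposition \ref{prop: Corollary_30} that $a([4,b,b,2,4,4])=2$ (its leading coefficient is $4$ and its second coefficient is $b\neq 4$), so it indeed has $\Aut(C)\simeq C_2\times C_2$; and Theorem \ref{LemAut4_8}(i) gives $|\Aut^+([4,b,b,2,4,4])|=8$, so $k=8/2=4$. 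Combining the three parts proves both the inequality $k(q_C)\leq 2$ away from this form and the equality $k(q_C)=4$ for it.

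The one point requiring care is precisely this last form: it is the unique primitive refined Humbert invariant with $\Aut(C)\simeq C_2\times C_2$ whose positive automorph group has order $8$ rather than $2$ or $4$, and the discrepancy between $|\Aut^+|=8$ and $a=2$ is exactly what forces $k=4$. The hard part is to be sure that no other $u=8$ form of Theorem \ref{LemAut4_8}(i) intervenes: $q_{3,c}$ and $q_{4,4,c}$ have second coefficient $4$, hence $a(q_C)=4$ (so they give $D_4$, not $C_2\times C_2$), while the diagonal form $q_{4,b,b}$, although it has $a=2$, represents integers $\equiv 2\Mod{4}$ and so violates (\ref{0,1MOD4}); this is why it is already absent from the list in Proposition \ref{classificationC2_C2}(ii). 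Keeping track of this exclusion is the main subtlety, the remainder being a direct read-off from the tables.
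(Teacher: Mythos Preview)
Your proof is correct and follows essentially the same route as the paper: read off $k(q_C)\leq 2$ from Tables \ref{Tab:k(C)} and \ref{Table: primtiive_case} for the imprimitive and the primitive $a(q_C)>2$ cases, then in the remaining primitive $a(q_C)=2$ case use Proposition \ref{classificationC2_C2}(ii) together with Theorem \ref{LemAut4_8} to see that $u\in\{2,4\}$ unless $q_C\sim[4,b,b,2,4,4]$, in which case $u=8$ and $k=4$. The only stylistic difference is that the paper cites equation (\ref{proof_claim_classification_C_2}) and ``the proof of Proposition \ref{classificationC2_C2}(ii)'' for the implication $u=8\Rightarrow q_C\sim q$, whereas you unpack this implication explicitly via the statement of Proposition \ref{classificationC2_C2}(ii) and Theorem \ref{LemAut4_8}.
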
 

\begin{proof}

The first assertion follows from Table \ref{Tab:k(C)} when $q_C$ is imprimitive (cf.\ Table \ref{Tab:autC} for all possible cases of $a(q_C).$) It also follows from Table \ref{Table: primtiive_case} when $q_C$ is primitive with $a(q_C)\neq 2$. Next assume that $q_C$ is a primitive ternary form with  $a(q_C) = 2$  which is not equivalent to   $q$, and so 
   $\Aut(C)\simeq C_2\times C_2$ by Table \ref{Tab:autC}. Then we see that $u=2, 4\textrm{ or }8$ by equation (\ref{proof_claim_classification_C_2}). Moreover, by the proof of Proposition \ref{classificationC2_C2}(ii), we see that $u=8$ only if $q_C\sim q.$  Hence, both assertions follow.
\end{proof}

\begin{remark}
\label{remarka(q)=1}

We can  discuss the case that $a(q_C)=1,$ for a ternary form $q_C.$ By Remark \ref{remark_C10} and Table \ref{Tab:autC}, we know that $\Aut(C)\simeq C_2$ in this case. As was discussed above many times, we may assume that $q_C$ is an Eisenstein reduced form. Since $r_4(q_C)=0$ by Table \ref{Tab:autC}, we have  that $q_C(1,0,0)\neq 4.$ Conversely, if a ternary form $q$ is equivalent to a refined Humbert invariant $q_C,$ for some genus 2 curve $C,$ we again assume that $q$ is reduced, and if $q(1,0,0)\neq4,$ then $r_4(q_C)=0,$ and thus $\Aut(C)\simeq C_2.$

We also observe that Proposition \ref{theoremk(q)} does not hold in the case that $a(q_C)=1.$ 
To this end, consider a reduced positive primitive form $q=[9,16,16,-16,0,0].$ Since $q\equiv0,1\Mod{4}$ and since $q(1,1,1)=5^2,$ which is relatively prime to the discriminant $\disc(q)=-6912,$ we conclude from Theorem 1 of \cite{refhum} that $q$ is equivalent to a refined Humbert invariant. Moreover, since $q$ does not represent 1, it follows from statement (\ref{thetaisirreducible}) that $q\sim q_C,$ for some genus 2 curve $C.$ Since we see that $|\Aut^+(q_C)|=12$ by Theorem \ref{propaut6_12_24}, it follows that $k(q_C)=12.$
\end{remark}

\section{Applications to the intersections} \label{generalizedHumbert}

Kani\cite{MJ} introduced the concept of a \textit{generalized Humbert
scheme} $H(q)$  associated with a given quadratic form $q$ to understand the algebraic
nature of the set of isomorphism classes of the curves lying on abelian product
surfaces. This set is defined by using the refined Humbert invariant as follows: Given any integral positive definite quadratic form $q$ in $r$ variables, let $$H(q)=\{\langle A,\theta\rangle  \in \mathcal{A}_2(K)\, |\,  q_{(A,\theta)} \text{ primitively represents } q\},$$ where $\mathcal{A}_2(K)$ is the set of isomorphism classes $\langle A,\theta\rangle$ of principally polarized abelian surfaces $(A,\theta).$ In other words, $\langle A,\theta\rangle\in H(q)$ if and only if there exists an injective homomorphism $f:\mathbb{Z}^r\xhookrightarrow{} \NS(A,\theta)$ with $\NS(A,\theta)/f(\mathbb{Z}^r)$ torsion-free such that $q=q_{(A,\theta)}\circ f.$ The set $H(q)$ is called a \textit{generalized Humbert scheme.} 

Notice that Dickson  has a slightly different definition for the concept of primitively representness; cf.\ \cite[p.~25]{dicksonsbook}. He also uses the notion of \textit{properly} instead of primitively. By the invariant factor theorem, we see that both definitions are equivalent, and so we can use the results from \cite{dicksonsbook} related to this notion. We use the symbol $f\rightarrow q$ when $f$ primitively (or properly) represents $q.$

Recall from \cite[p.~25]{MJ} that if $q(x)=Nx^2,$ i.e., $r=1,$ then $H(q)=H_N$ is the \textit{classical Humbert surface of invariant $N$}.

By using Tables \ref{Tab:k(C)} and \ref{Table: primtiive_case}, we can get  interesting results related to the intersections of the \textit{generalized Humbert schemes} $H(q)$.

It may be useful to mention that  Milio and  Robert\cite{damien}\footnote{Note that this paper is actually published. However, the authors removed this section for length reasons, and they declared that “they should probably make a separate article about this”; cf.\ \cite[p.~99]{damien2}.} recently found a relation between the denominators of the Hilbert modular polynomials and the generalized Humbert schemes $H(q)$, for a binary form $q$; cf.\ \S5.4 of \cite{damien}.

As in the ternary quadratic forms, we  simply use the abbreviation $q=[a,b,c]$ to denote an integral binary quadratic form $q(x,y)=ax^2+bxy+cy^2.$

We know that for a binary quadratic form $q$ representing a square the nonempty $H(q)$ is a one-dimensional subscheme of $\mathcal{A}_2(K);$ cf.\ Theorem 7.1 of \cite{SubcoversofCurves}. We also know that if $q_1$ and $q_2$ are not equivalent binary quadratic forms such that $H(q_i)\neq\varnothing,$ for $i=1,2,$ then the intersection $H(q_1)\cap H(q_2)$ consists of finitely many \textit{CM points}; cf.\ Lemma 10.7 of \cite{SubcoversofCurves}. 
Hence, we have that $q_{(A,\theta)}$ is a ternary form for any principally polarized abelian surface $(A,\theta)$ in this intersection; cf.\ Section \ref{ternaryquadraticforms}.  

We now provide a proof of Theorem \ref{[4,4,4]}.
\spm

\noindent\textit{Proof of} Theorem \ref{[4,4,4]}. Observe that we have from (\ref{eq: statement_thm_4a_D6}) that  $q_C$ primitively represents the binary quadratic form $q:=[4,4,4]$ if and only if  $\Aut(C)$ contains a subgroup isomorphic to the dihedral group $D_6.$

($\Leftarrow$) Obviously, $q$ primitively represents itself. Also, by what was observed above, this part easily follows by considering $q_{1,c}(1,1,0)=q$ and\\ $q_{2,c}(1,-1,0)=q,$ for any $c$ as in the hypothesis.

($\Rightarrow$) If $q_C$ is a binary form, then we are done by what was observed above. We henceforth assume that $q_C$ is a ternary form.  As in the proof of Lemma \ref{r4(q)}(i), we know that $r_4(q)=6,$ and thus $r_4(q_C)\geq6$ since $q_C\rightarrow q.$ Thus, we have three cases for the value of $r_4(q_C)$ as $6, 8\textrm{ and }12$ by Table \ref{Tab:autC}. If $r_4(q_C)=12,$ or equivalently  $\Aut(C)\simeq \GL_2(3)$ by Table \ref{Tab:autC}, then we have from Proposition \ref{autgl3} that $q_C\sim q_{1,4}.$ In a similar way, if $r_4(q_C)=8,$ we get from the same result that $q_C\sim q_{2,4}.$

 We henceforth suppose that $r_4(q_C)=6.$ Contrary to the previous cases, $q_C$ may be primitive in this case. We first assume that $q_C$ is a primitive form.  By Proposition \ref{classificationD6}, there exists $c\equiv 1\Mod{4}, c>1,$ such that $q_C\sim q_{1,c}$ or $q_{2,c}.$
 
   We next assume that $q_C$ is imprimitive. Since $\Aut(C)\simeq D_6$ in this case (cf.\ Table \ref{Tab:autC}), it follows from Proposition \ref{classificationD6} that there exists $c>4,\ 4\mid c$ such that $q_C\sim q_{1,c}\textrm{ or }q_{2,c}.$ Thus, the first assertion follows. The second assertion directly follows from Propositions \ref{allformsarehumbert} and \ref{all_primitiveforms_humbert}.

To obtain some useful results from Theorem \ref{[4,4,4]} we need the following arithmetic fact, which is interesting and useful itself.

\begin{prop}
\label{prop: upper_bound}
    Let $q$ be a positive definite binary quadratic form such that $q$ does not represent $n$. If a positive definite ternary quadratic form $f$ primitively represents $q$ and $f$ represents $n$, then
    $$
    |\disc(f)| \;\leq\; n|\disc(q)|.
    $$
\end{prop}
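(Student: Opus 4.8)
The plan is to translate the statement into the language of integral lattices and then run an orthogonal-projection argument. Regard $f$ as a positive definite symmetric bilinear form $B$ on the lattice $L=\Z^3$ with $f(x)=B(x,x)$, and recall that ``$f$ primitively represents $q$'' means precisely that there is a \emph{saturated} (primitive) rank-$2$ sublattice $M\subseteq L$ with $B|_M$ equal to $q$. Let $M^{\perp}=\{x\in L: B(x,M)=0\}$ be its orthogonal complement, a rank-$1$ sublattice. The whole argument rests on comparing the Gram determinants $\det(\mathrm{Gram})$ of $L$, $M$ and $M^{\perp}$. In the present normalization one has $|\disc(f)|=4\det(\mathrm{Gram}_L)$ and $|\disc(q)|=4\det(\mathrm{Gram}_M)$ (both determinants being positive for positive definite forms, and the constant $-4$ being the same for binary and ternary forms, as one checks on the forms in Table \ref{Table: in_proof}); hence the factors cancel and it suffices to prove $\det(\mathrm{Gram}_L)\le n\,\det(\mathrm{Gram}_M)$.

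First I would choose $v\in L$ with $f(v)=n$ and split it over $\Q$ as $v=v_1+v_2$ with $v_1\in M_\Q$ and $v_2\in M^\perp_\Q$; orthogonality gives $n=q(v_1)+f(v_2)$ with both summands nonnegative. The hypothesis that $q$ does \emph{not} represent $n$ forces $v_2\neq 0$: otherwise $v=v_1\in M_\Q\cap L=M$ by saturation of $M$, so $n=q(v)$ would be represented, a contradiction. Letting $P\colon L_\Q\to M^\perp_\Q$ be the orthogonal projection, the image $P(L)$ is a rank-$1$ lattice with $M^{\perp}\subseteq P(L)$, and $v_2=P(v)$ is a nonzero integral multiple of a generator $w_0$ of $P(L)$; consequently $f(w_0)\le f(v_2)\le n$.

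The key step---and the one I expect to be the main obstacle---is the index identity $[P(L):M^{\perp}]=[L:M\oplus M^{\perp}]$. I would prove it by considering the homomorphism $L\to M^\perp_\Q/M^{\perp}$, $x\mapsto P(x)\bmod M^{\perp}$, whose image is $P(L)/M^{\perp}$ and whose kernel is exactly $M\oplus M^{\perp}$ (here saturation of $M$ is used again: if $P(x)\in M^{\perp}$ then $x-P(x)\in M_\Q\cap L=M$). Writing $k$ for this common index and $w$ for a generator of $M^{\perp}$, we get $w=\pm k\,w_0$, so $\det(\mathrm{Gram}_{M^{\perp}})=f(w)=k^2 f(w_0)\le k^2 n$.

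Finally I would assemble the determinant relation. Since $M\oplus M^{\perp}$ is an orthogonal direct sum of index $k$ in $L$, we have $\det(\mathrm{Gram}_M)\det(\mathrm{Gram}_{M^{\perp}})=\det(\mathrm{Gram}_{M\oplus M^{\perp}})=k^2\det(\mathrm{Gram}_L)$, whence
$$\det(\mathrm{Gram}_L)=\frac{\det(\mathrm{Gram}_M)\det(\mathrm{Gram}_{M^{\perp}})}{k^2}\le\frac{\det(\mathrm{Gram}_M)\cdot k^2 n}{k^2}=n\,\det(\mathrm{Gram}_M),$$
which, after restoring the common factor, is exactly $|\disc(f)|\le n|\disc(q)|$. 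Beyond the index identity, the only points needing care are the bookkeeping of the discriminant normalization (the factor $4$ and the sign conventions) and the repeated use of saturation of $M$ to identify $M_\Q\cap L$ with $M$.
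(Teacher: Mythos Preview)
Your proof is correct and takes a genuinely different route from the paper's. The paper works in explicit coordinates: after putting $f'(x,y,0)=q(x,y)$ it completes the square in $x$ to write $4af'=(2ax+ty+sz)^2+g(y,z)$, computes $\disc(g)$ directly, and then applies a second completing-the-square inequality (Cox's equation (2.4)) to bound $g(y_0,z_0)$ from below in terms of $z_0^2$ and $|\disc(g)|$; the assumption that $q$ does not represent $n$ is used only to ensure $z_0\neq 0$. Your argument, by contrast, is a clean lattice-theoretic one: project to $M^{\perp}_{\mathbb Q}$, use saturation of $M$ both to get $v_2\neq 0$ and to identify the kernel in the index identity $[P(L):M^{\perp}]=[L:M\oplus M^{\perp}]$, and finish with the Gram-determinant relation for a finite-index sublattice. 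The two arguments are cousins (both ultimately exploit that the component of the representing vector orthogonal to $M$ is short), but yours avoids all explicit coefficient manipulation, makes the role of primitivity transparent, and generalizes immediately to higher-rank sublattices; the paper's version has the virtue of being entirely elementary and self-contained, needing nothing beyond high-school algebra. Your bookkeeping on the discriminant normalization ($|\disc|=4\det(\mathrm{Gram})$ in both the binary and ternary cases here) is correct and is indeed consistent with the entries in Table~\ref{Table: in_proof}.
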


\begin{proof}

    Let us put $q=[a, t, b]$.  Since $f$ primitively represents $q$, there is $f'\sim f$ such that $f'(x,y,0) = q(x,y)$; cf.\ \cite[p.~29]{dicksonsbook}, and so, let us put $f'=[a, b, c, r, s, t]$.

By completing the square (Hermite's method), we see that $4 a f'(x, y, z)=(2 a x+ty +s z)^{2}+g(y, z)$, for some positive binary form $g(y, z)$. We explicitly calculate $g(y, z):$
$$
g(y, z)=\left(4 a b-t^{2}\right) y^{2}+(4 a r-2 s t) z y+\left(4 a c-s^{2}\right) z^{2} .
$$
We see that $\disc(g)=(4 a r-2 s t)^{2}-4\left(4 a b-t^{2}\right)\left(4 a c-s^{2}\right)
=16 a(c \disc(q)+q(-r, s))$.

Since $f$ represents $n$, $f'$ represents $n$,  but $q$ does not represent this number, we see that $f'\left(x_{0}, y_{0}, z_{0}\right)=n$, for some $\left(x_{0}, y_{0}, z_{0}\right) \in \mathbb{Z}^{3}$ with $z_{0} \neq 0$. If we apply equation (2.4) of \cite{davidcox} to $g\left(y_{0}, z_{0}\right)$, then we have
$$
\begin{aligned}
g\left(y_{0}, z_{0}\right) & =\frac{\left(2\left(4 a b-t^{2}\right) y_{0}+(4ar-2 s t) z_{0}\right)^{2}-\disc(g) z_{0}^{2}}{4\left(4 a b-t^{2}\right)}\\ &=\frac{\left(2|\disc(q)| y_{0}+\left(4 a r-2 s t) z_{0}\right)^{2}+|\disc(g)| z_{0}^{2}\right.}{4|\disc(q)|} \\
& \geq \frac{|\disc(g)| z_{0}^{2}}{4|\disc(q)|} \geq \frac{|\disc(g)|}{4|\disc(q)|}.
\end{aligned}
$$
Therefore, we see that
$$
4an \;=\; \left(2 a x_{0}+t y_{0}+s z_{0}\right)^{2}+g\left(y_{0}, z_{0}\right) \;\geq\; \frac{|\disc(g)|}{4|\disc(q)|}.
$$
So, we have $4 a n \geq -\frac{16 a(c \disc(q)+q(-r, s))}{4|\disc(q)|}=\frac{16 a(c|\disc(q)| - q(-r, s))}{4|\disc(q)|}$. Since $a>0$, we have that
$$
\begin{aligned}
\quad n \;\geq\; \frac{(c|\disc(q)|-q(-r, s))}{|\disc(q)|}, \quad \text{ and so } \quad n|\disc(q)| \;\geq\; c|\disc(q)|-q(-r, s).
\end{aligned}
$$
The assertion follows since $|\disc(f)|= |\disc(f')| = c|\disc(q)| -q(-r, s)$.
\end{proof}

Here is one of the interesting applications of this arithmetic fact.
\begin{cor}
\label{cor: finite_intersection}
    Let $q$ be a binary quadratic form. If $q$ does not primitively represent $m$, then $H(q)\cap H_m$ is finite.
\end{cor}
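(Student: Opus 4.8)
The plan is to show that every point of $H(q)\cap H_m$ is a CM point whose refined Humbert invariant is a ternary form of bounded discriminant, and then to invoke the classical finiteness of positive ternary forms (and of the associated CM points) of bounded discriminant. First I would take $\langle A,\theta\rangle\in H(q)\cap H_m$ and set $f:=q_{(A,\theta)}$, so that $f$ primitively represents the binary form $q$ and, because $H_m=H(mx^2)$, also primitively represents $m$; say $f(D)=m$ for a primitive $D\in\NS(A,\theta)$. If $\NS(A,\theta)$ had rank $2$, then the primitive embedding $\Z^2\hookrightarrow\NS(A,\theta)$ realizing $q$ would have torsion-free cokernel between rank $2$ lattices, hence be an isomorphism, forcing $f\sim q$; but then the primitive vector $D$ would exhibit $q$ as \emph{primitively} representing $m$, contradicting the hypothesis. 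Since a rank $1$ form cannot primitively represent a binary form, this leaves $\rank\NS(A,\theta)=3$, i.e.\ $f$ is ternary and $\langle A,\theta\rangle$ is a CM point (cf.\ Remark \ref{remark_C10}).

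The heart of the argument is a discriminant bound drawn from Proposition \ref{prop: upper_bound}. Let $L_q\subseteq\NS(A,\theta)$ be the saturated rank $2$ sublattice on which $f$ restricts to $q$ (saturated precisely because the representation of $q$ is primitive). The key observation is that the primitive vector $D$ with $f(D)=m$ cannot lie in $L_q$: if it did, then, $L_q$ being saturated, $D$ would already be primitive inside $L_q\cong\Z^2$, so $q$ would primitively represent $m$, again contradicting the hypothesis. Thus, after choosing coordinates in which $L_q$ is the plane $z=0$, the representation $f(D)=m$ has nonzero $z$-coordinate. This transversality is exactly the input that the proof of Proposition \ref{prop: upper_bound} extracts from its stated hypothesis, so the identical completion-of-the-square computation yields $|\disc(f)|\le m\,|\disc(q)|$.

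Finally I would conclude by counting. By reduction theory there are only finitely many $\GL_3(\Z)$-equivalence classes of positive ternary forms $f$ with $|\disc(f)|\le m\,|\disc(q)|$, and each such class is the refined Humbert invariant of at most finitely many $\langle A,\theta\rangle\in\mathcal{A}_2(K)$, since for a CM point the discriminant of $f$ controls the discriminant of the imaginary quadratic order by which $A\sim E\times E$ has complex multiplication, leaving only finitely many choices of $E$ up to isogeny and finitely many principal polarizations. Hence $H(q)\cap H_m$ is finite.

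The step I expect to be the main obstacle is the reconciliation of the two notions of representation: Proposition \ref{prop: upper_bound} is phrased with ``$q$ does not represent $n$,'' whereas the corollary only assumes ``$q$ does not \emph{primitively} represent $m$.'' The resolution is that the primitivity built into the definition of $H_m$ forces the representing vector $D$ to be transversal to $L_q$, which is all that the proof of the Proposition actually uses; I would present this either as a direct rerun of that computation with the weaker hypothesis, or, if a clean citation is preferred, record it first as a mild strengthening of Proposition \ref{prop: upper_bound} in which ``does not represent'' is relaxed to ``does not primitively represent'' under the extra assumption that $f$ \emph{primitively} represents $n$.
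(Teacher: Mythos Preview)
Your proposal is correct and follows essentially the same route as the paper: show the refined Humbert invariant must be ternary, apply Proposition~\ref{prop: upper_bound} to bound its discriminant, and then invoke finiteness of ternary forms of bounded discriminant together with finiteness of the associated CM points.

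In fact your treatment is more careful than the paper's on one point you rightly flag. The paper's proof writes ``Since $m\notin R(q)$'' and invokes Proposition~\ref{prop: upper_bound} directly, tacitly using the stronger hypothesis that $q$ does not represent $m$ at all, whereas the corollary only assumes non-\emph{primitive} representation. Your observation that the primitive vector $D$ with $f(D)=m$ cannot lie in the saturated sublattice $L_q$ (else $D$ would be primitive in $L_q$ and $q$ would primitively represent $m$) supplies exactly the missing transversality $z_0\neq 0$ that the proof of Proposition~\ref{prop: upper_bound} actually uses. So your proposed ``mild strengthening'' of the proposition is the honest way to close this gap, and your argument for it is sound.
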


\begin{proof}
If the intersection is empty, there is nothing to prove. Otherwise, we first see that $q_{(A, \theta)}$ is a ternary quadratic form for any $(A, \theta)$ in the intersection. In fact, if it was binary, then we would have $q_{(A,\theta)} \sim q$, which is not possible since $q_{(A,\theta)}$ primitively represents $m$.  We next observe that $|\disc(q_{(A, \theta)})|$ has an upper bound. To see this, note that since $(A, \theta) \in H(q) \cap H_m$, we have that $q_{(A, \theta)}$ primitively represents $q$ and $m$. Since $m \notin R(q)$,  it follows by Proposition \ref{prop: upper_bound} that $|\disc(q_{(A, \theta)})| \leq m |\disc(q)|$.

    Recall now that there are finitely many positive definite ternary quadratic forms $q$ such that $|\disc(q)|\leq N$, for a fixed $N$ by Theorem 11 of \cite{watson1960integral}. Furthermore, there are finitely many CM-points whose associated quadratic form is equivalent to a fixed $q$ by what was stated above, and so the assertion follows.
\end{proof}

Note that one can provide a completely different (and a geometric) proof for this corollary by using Theorem 10.5 of \cite{SubcoversofCurves} in the case that the binary form $q$ represents a square.
$$ \text{ Let } H^*(q) \;=\; \{\langle A, \theta \rangle\in H(q): q_{(A,\theta)}\sim q_C, \textrm{ for some curve of genus }2 \}.$$

\begin{cor} 
\label{intersectioncorollary}
Let $q=[4,4,4],$ and let $c>1$  with $c\equiv0, 1\Mod{4}$ such that $q$ does not primitively represent $c.$ Then \begin{equation}
\label{intersection444}
 H^*(q) \;\cap\; H_c \;=\;
    \bigcup_{\mathclap{\substack{q_{i,c'}\rightarrow c, \ i=1, 2 \\ 1<c'\leq c + 1,\ c'\equiv 0, 1\Mod{4}  }}}
          \;(H(q_{1,c'})\cup H(q_{2,c'}))\;
\end{equation}

\end{cor}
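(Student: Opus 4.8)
The plan is to prove both inclusions by translating membership into statements about the refined Humbert invariant $q_{(A,\theta)}$ and then feeding them into the $D_6$-classification of Theorem \ref{[4,4,4]}. First I would record the defining conditions: a point $\langle A,\theta\rangle$ lies in $H^*(q)\cap H_c$ exactly when $q_{(A,\theta)}$ primitively represents the binary form $q=[4,4,4]$, primitively represents $c$, and is equivalent to $q_C$ for some genus $2$ curve $C$. The key preliminary step is to show that on this intersection $q_{(A,\theta)}$ is necessarily \emph{ternary}: it has at most three variables since $\rank\NS(A)\leq4$, and it has at least two variables since it primitively represents a binary form; were it binary it would satisfy $q_{(A,\theta)}\sim[4,4,4]$, which by hypothesis does not primitively represent $c$, contradicting membership in $H_c$. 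Hence $q_{(A,\theta)}$ is ternary throughout the intersection.

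For the inclusion $\subseteq$, take such a point. Since $q_{(A,\theta)}\sim q_C$ primitively represents $[4,4,4]$, equation (\ref{eq: statement_thm_4a_D6}) gives $D_6\leqslant\Aut(C)$, whence Theorem \ref{[4,4,4]} forces $q_{(A,\theta)}$ to be equivalent to the binary $[4,4,4]$, to $q_{1,c'}$, or to $q_{2,c'}$ for some $c'\equiv0,1\Mod4$ with $c'>1$; the binary alternative is excluded by the ternary reduction, so $q_{(A,\theta)}\sim q_{i,c'}$ for some $i\in\{1,2\}$. As $q_{(A,\theta)}$ primitively represents $c$, so does $q_{i,c'}$, giving the index condition $q_{i,c'}\to c$ and $\langle A,\theta\rangle\in H(q_{i,c'})$. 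It remains to establish $c'\leq c+1$, which is where Proposition \ref{prop: upper_bound} enters. Working in coordinates in which the plane $z=0$ carries $[4,4,4]$, I would note that the hypothesis ``$[4,4,4]$ does not primitively represent $c$'' forces any primitive representation of $c$ by $q_{i,c'}$ to have last coordinate $z_0\neq0$; the estimate in the proof of Proposition \ref{prop: upper_bound} then yields $|\disc(q_{i,c'})|\leq c\,|\disc([4,4,4])|=48c$. Using the discriminant values $|\disc(q_{1,c'})|=16(3c'-4)$ and $|\disc(q_{2,c'})|=48c'$ from Table \ref{Table: in_proof}, this reads $3c'-4\leq3c$ and $c'\leq c$ respectively, so in both cases $c'\leq c+1$.

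For the reverse inclusion $\supseteq$, fix $c'$ with $1<c'\leq c+1$ and $c'\equiv0,1\Mod4$, together with $i\in\{1,2\}$ satisfying $q_{i,c'}\to c$, and take $\langle A,\theta\rangle\in H(q_{i,c'})$. Since $q_{i,c'}$ is ternary while $q_{(A,\theta)}$ has at most three variables, the primitive representation forces $q_{(A,\theta)}\sim q_{i,c'}$; by Propositions \ref{allformsarehumbert} and \ref{all_primitiveforms_humbert} this form is equivalent to some $q_C$ with $C$ of genus $2$, so $q_{(A,\theta)}\sim q_C$. Because primitive representation is transitive, the relation $q_{i,c'}\to[4,4,4]$ (its restriction to $z=0$ being $\sim[4,4,4]$) and the index condition $q_{i,c'}\to c$ give $q_{(A,\theta)}\to[4,4,4]$ and $q_{(A,\theta)}\to c$; hence $\langle A,\theta\rangle\in H(q)\cap H_c$, and since moreover $q_{(A,\theta)}\sim q_C$ it lies in $H^*(q)\cap H_c$.

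The main obstacle is the discriminant bound in the $\subseteq$ direction. The subtle point is that Proposition \ref{prop: upper_bound} is literally stated under the stronger hypothesis that $[4,4,4]$ does not \emph{represent} $c$, whereas here only non-\emph{primitive}-representability is assumed; I would argue that this weaker assumption already forces the representing vector to have $z_0\neq0$, so the inequality produced inside that proof still applies, after which the bound $c'\leq c+1$ is routine discriminant arithmetic. The remaining steps—controlling the rank of $\NS(A,\theta)$ and exploiting transitivity of primitive representation—are straightforward bookkeeping.
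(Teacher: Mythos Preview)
Your proposal is correct and follows essentially the same route as the paper's proof: establish that $q_{(A,\theta)}$ is ternary, invoke Theorem~\ref{[4,4,4]} for the classification in the $\subseteq$ direction, use Proposition~\ref{prop: upper_bound} and the discriminant values from Table~\ref{Table: in_proof} for the bound $c'\leq c+1$, and check the $\supseteq$ direction by noting that each $q_{i,c'}$ primitively represents both $[4,4,4]$ and $c$.

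The one place where you are actually more careful than the paper is the application of Proposition~\ref{prop: upper_bound}. The paper simply cites that proposition, although its literal hypothesis is that $q$ does not \emph{represent} $c$, while the corollary only assumes that $q$ does not \emph{primitively} represent $c$. Your observation that a primitive representation of $c$ by $q_{i,c'}$ with $z_0=0$ would yield a primitive representation of $c$ by $[4,4,4]$ is exactly the missing sentence needed to justify $z_0\neq0$ and hence the inequality from the proof of Proposition~\ref{prop: upper_bound}; this patches a small imprecision in the paper's argument. Otherwise the two proofs coincide.
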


\begin{proof}
 Since it is clear that $q_{1,c}$ and $q_{2,c}$ primitively represent $q$ and $c$, for any $c\equiv0,1\Mod{4},$ $c>1$, it follows that the left hand side of equation (\ref{intersection444}) contains the right hand side of that equation. 

For the other inclusion, we first see that for any $(A,\theta)\in H^*(q)\cap H_c,$ the form $q_{(A,\theta)}$ is a ternary form as in the proof of Corollary \ref{cor: finite_intersection}. Note that $q_{(A,\theta)}\sim q_C,$ for some curve $C$ of genus 2  by the definition of $H^*(q).$ Since $q_C\rightarrow q,$  it follows from Theorem \ref{[4,4,4]}  that $q_C\sim q_{1,c'}$ or $q_C\sim q_{2,c'},$ where $c'\equiv0,1\Mod{4},$ with $c'>1.$ 

 We next claim that $c'\leq c +1.$ To prove this, recall first from Table \ref{Table: in_proof} that $|\disc(q_{1,c'})| = 16(3c'-4)$ and $|\disc(q_{2,c'})| = 48c'$. Since $|\disc(q)| = 48$, we obtain from Proposition \ref{prop: upper_bound} that $16(3c'-4) \leq 48c$ and $48c' \leq 48c$, and so we obtain that $c'\leq c +1$ as claimed. 
Thus,  the other inclusion holds, which proves the assertion.
\end{proof}

Note that one can use equation (\ref{intersection444}) to find the intersection $H^*([4,4,4])\cap H(q),$ for a given binary quadratic form $q.$


As was done in Theorem \ref{[4,4,4]}, we determine all refined Humbert invariants $q_C \rightarrow [4,0,4].$ 

\begin{thm} \label{[4,0,4]}
Let $C/K$ be a curve of genus $2.$ Then we have that $q_C$ primitively represents the binary quadratic form $q:=[4,0,4]$ if and only if $q_C\sim q,$ or $q_C$ is equivalent to a form in the set $Q_c,$ with $c\equiv0,1\Mod{4},\ c>1.$ In addition, if $q\in Q_c,$ for any  $c\equiv0,1\Mod{4},\ c>1,$ then there is a curve $C/K$ such that $q_C$ is equivalent to $q.$ 
 
\end{thm}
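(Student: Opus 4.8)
The plan is to follow the template of the proof of Theorem \ref{[4,4,4]}, replacing the pair $([4,4,4], D_6)$ by $([4,0,4], D_4)$, and to reduce the statement to the classifications already established in Propositions \ref{autgl3} and \ref{classificationD4}. The entry point is the equivalence (\ref{eq: statement_thm_4a_D4}): $q_C$ primitively represents $q = [4,0,4]$ if and only if $D_4 \leqslant \Aut(C)$. Among the groups listed in Table \ref{Tab:autC}, the ones whose order is divisible by $8$, hence the only ones that can contain a copy of $D_4$, are exactly $D_4$, $C_3 \rtimes D_4$ and $\GL_2(3)$, so the hypothesis $q_C \rightarrow [4,0,4]$ forces $\Aut(C) \in \{D_4, C_3 \rtimes D_4, \GL_2(3)\}$, equivalently $r_4(q_C) \in \{4, 8, 12\}$.

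For the necessity ($\Rightarrow$) I would first treat the case that $q_C$ is binary: a primitive representation of the binary form $q$ by the binary form $q_C$ is, by the invariant factor theorem, an equivalence, so $q_C \sim q$. If instead $q_C$ is ternary, I split according to $\Aut(C)$. When $\Aut(C) \simeq \GL_2(3)$ (so $r_4(q_C) = 12$), Proposition \ref{autgl3} gives $q_C \sim q_{1,4}$; when $\Aut(C) \simeq C_3 \rtimes D_4$ (so $r_4(q_C) = 8$), the same proposition gives $q_C \sim q_{2,4}$; since $q_{1,4}, q_{2,4} \in Q_4$, both fall into the asserted family. When $\Aut(C) \simeq D_4$ (so $r_4(q_C) = 4$), Proposition \ref{classificationD4} directly supplies $c > 4$ with $c \equiv 0, 1 \Mod 4$ for which $q_C$ is equivalent to a member of $Q_c$. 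Combining the three subcases yields precisely ``$q_C \sim q$ or $q_C$ is equivalent to a form in $Q_c$ with $c \equiv 0, 1 \Mod 4$, $c > 1$''.

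For the sufficiency ($\Leftarrow$) I would exhibit the required primitive representation in each case. If $q_C \sim q$ this is trivial. For the generic members $q_{3,c} = [4,4,c,-4,-4,0]$, $q_{4,4,c} = [4,4,c,0,0,0]$ and $q_{6,c} = [4,4,c,0,-4,0]$ of $Q_c$, restriction to $z = 0$ produces exactly $4x^2 + 4y^2 = [4,0,4]$, and the coordinate inclusion $(x,y) \mapsto (x,y,0)$ is primitive, whence $q_C \rightarrow [4,0,4]$. For the two members $q_{1,4}, q_{2,4}$ of $Q_4$ the restriction to $z=0$ is not $[4,0,4]$, but one locates an orthogonal pair of norm-$4$ vectors spanning a primitive rank-$2$ sublattice (e.g. $(1,0,0)$ and $(0,-1,1)$ for $q_{1,4}$), or more cleanly invokes (\ref{eq: statement_thm_4a_D4}) again since $\GL_2(3)$ and $C_3 \rtimes D_4$ both contain $D_4$. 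The supplementary realizability assertion then follows verbatim: for $c \equiv 0 \Mod 4$ the members of $Q_c$ are imprimitive and covered by Proposition \ref{allformsarehumbert}, while the primitive members ($q_{4,4,c}, q_{6,c}$ with $c \equiv 1 \Mod 4$ and $q_{3,c}$ with $c \equiv 1 \Mod 8$) are covered by Proposition \ref{all_primitiveforms_humbert}.

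The main obstacle is not a new idea but the bookkeeping forced by the residue-class definition of $Q_c$. The split between $c \equiv 1 \Mod 8$ and $c \equiv 5 \Mod 8$ is dictated precisely by the reach of Proposition \ref{all_primitiveforms_humbert}, which realizes $q_{3,c}$ as a refined Humbert invariant only for $c \equiv 1 \Mod 8$; consequently the argument must rely on the classification of primitive forms $q_C$ in \cite{refhum} to guarantee that no $q_{3,c}$ with $c \equiv 5 \Mod 8$ arises on the necessity side, so that the two directions match over exactly the forms contained in $Q_c$. The only genuinely computational point, the primitive representations in the converse, is immediate for the three coordinate-plane forms and a one-line lattice check for $q_{1,4}$ and $q_{2,4}$.
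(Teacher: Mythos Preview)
Your proposal is correct and follows essentially the same route as the paper: both pivot on (\ref{eq: statement_thm_4a_D4}) to reduce to the classifications in Propositions \ref{autgl3} and \ref{classificationD4}, and both invoke Propositions \ref{allformsarehumbert} and \ref{all_primitiveforms_humbert} for the realizability clause. The only cosmetic difference is that the paper keeps $r_4(q_C)=6$ on the table and then rules it out via (\ref{eq: statement_thm_4a_D6}) (since $D_4\not\leqslant D_6$), whereas you exclude $D_6$ up front by the order-$8$ divisibility argument; and for ($\Leftarrow$) the paper uniformly applies (\ref{eq: statement_thm_4a_D4}) after identifying $\Aut(C)$ from the classification, while you exhibit the primitive representation of $[4,0,4]$ directly for the $c>4$ forms. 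Your concern in the final paragraph about $q_{3,c}$ with $c\equiv5\Mod 8$ is unnecessary at this stage: Proposition \ref{classificationD4} is stated with the sets $Q_c$ already built in, so invoking it as a black box suffices.
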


\begin{proof}  We see that the second assertion directly follows from Propositions \ref{allformsarehumbert} and \ref{all_primitiveforms_humbert}. We now prove the first assertion.

($\Leftarrow$) As in Theorem \ref{[4,4,4]}, this part can be shown by using the theory of ternary quadratic forms. But, we also prove this part by using (\ref{eq: statement_thm_4a_D4}) together with the above results. Indeed, we first observe that  $\Aut(C)$ contains a subgroup isomorphic to the dihedral group $D_4$ in our situation.

Firstly, if $q_C\sim q,$ then the observation directly follows from (\ref{eq: statement_thm_4a_D4}). Secondly, if $q_C\sim q_{i,4},$ for $i=1, 2,$ then we have from Proposition \ref{autgl3} that $\Aut(C)\simeq \GL_2(3) \textrm{ or } C_3\rtimes D_4.$ Lastly, if $q_C\sim q_{j,c},$ for $j=3, 6,$ or $q_C\sim q_{4,4,c},$ where $c$ satisfies the conditions in the hypothesis, then we see from Proposition \ref{classificationD4} that $\Aut(C)\simeq D_4.$ Thus, the observation follows since $D_4\leqslant \Aut(C)$ up to isomorphism in all cases by the group theory.  Hence, this part follows from (\ref{eq: statement_thm_4a_D4}).

($\Rightarrow$) if $q_C$ is a binary form, then it directly follows from (\ref{eq: statement_thm_4a_D4}), and so assume that it is a ternary form.  It is easy to see that $r_1(x^2+y^2)=4,$ and thus we have that $r_4(q)=4,$ and so $r_4(q_C)\geq4.$ Thus, we have four cases for the value of $r_4(q_C)$ as $4, 6, 8\textrm{ and }12$ by Table \ref{Tab:autC}. As in the proof of Theorem \ref{[4,4,4]}, if $r_4(q_C)=12$ or $8,$ then  we have from Table \ref{Tab:autC} that  $\Aut(C)\simeq \GL_2(3)$ or $C_3\rtimes D_4$ respectively, and thus it follows from Proposition \ref{autgl3} that $q_C\sim q\in Q_4.$ 

We next observe that $r_4(q_C)=6$ is not a possible case. If it was possible, then we would have from Table \ref{Tab:autC} that $\Aut(C)\simeq D_6,$ which contradicts with (\ref{eq: statement_thm_4a_D6}) since $D_4$ is not a subgroup of $D_6.$

  We henceforth suppose that $r_4(q_C)=4,$ and it follows from Table \ref{Tab:autC} that $\Aut(C)\simeq D_4.$ Then, the assertion directly follows from Proposition \ref{classificationD4}.
 \end{proof}


In particular, it turns out that we have a simple expression of the intersection $H^*([4,0,4]\cap H_c.$ 

\begin{cor} 
\label{intersectioncorollary2}
Let $q = [4,0,4]$, and let $c>1$  with $c\equiv0, 1\Mod{4}$ such that $q$ does not primitively represent $c.$ Then  \begin{equation} \label{intersection_404}
 H^*(q)\cap H_c=
    \bigcup_{\mathclap{\substack{q'\rightarrow c}}}
          H(q'),
\end{equation} where the union is over all  quadratic forms $q'$ which represent $c$ in the set $\cup_{4\leq c'\leq c+1} Q_{c'}.$
\end{cor}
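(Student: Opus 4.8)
The plan is to follow the proof of Corollary \ref{intersectioncorollary} almost verbatim, feeding in Theorem \ref{[4,0,4]} (the classification of the ternary $q_C$ that primitively represent $[4,0,4]$) in place of Theorem \ref{[4,4,4]}, and reading off the discriminants of the relevant forms from Table \ref{Table: in_proof}. Throughout I write $q=[4,0,4]$, so that $|\disc(q)|=64$.

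For the inclusion $\supseteq$, I would fix $c'$ with $4\le c'\le c+1$ and a form $q'\in Q_{c'}$ with $q'\to c$. Each such $q'$ is a refined Humbert invariant $q_C$ of a genus $2$ curve (by the second assertion of Theorem \ref{[4,0,4]}, equivalently Propositions \ref{allformsarehumbert} and \ref{all_primitiveforms_humbert}) and primitively represents $q$ (Theorem \ref{[4,0,4]}). Since every member of $Q_{c'}$ is ternary and the Picard number of an abelian surface is at most $4$, any $\langle A,\theta\rangle\in H(q')$ satisfies $q_{(A,\theta)}\sim q'$; hence $\langle A,\theta\rangle\in H^*(q)$ because $q'\to q$, and $\langle A,\theta\rangle\in H_c$ because $q'\to c$. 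This gives $H(q')\subseteq H^*(q)\cap H_c$, so the right-hand side of (\ref{intersection_404}) is contained in the left-hand side.

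For the reverse inclusion $\subseteq$, I would take $\langle A,\theta\rangle\in H^*(q)\cap H_c$ and argue as in Corollary \ref{cor: finite_intersection} that $q_{(A,\theta)}$ is ternary: were it binary it would equal $q$, contradicting that $q$ does not primitively represent $c$ while $q_{(A,\theta)}$ does. By the definition of $H^*(q)$ we may write $q_{(A,\theta)}\sim q_C$; since $q_C\to q$, Theorem \ref{[4,0,4]} forces $q_C$ to be equivalent to some $q'\in Q_{c'}$ (the option $q_C\sim q$ is excluded because $q$ is binary). Membership in $H_c$ gives $q'\to c$, so $\langle A,\theta\rangle\in H(q')$ with $q'$ a form of the union, once $c'$ is shown to lie in range.

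The bound on $c'$ is the one delicate point, and I expect it to be the main obstacle. Applying Proposition \ref{prop: upper_bound} to $q_{(A,\theta)}\sim q'$ yields $|\disc(q')|\le 64c$, and the discriminants $|\disc(q_{4,4,c'})|=64c'$, $|\disc(q_{6,c'})|=64(c'-1)$ and $|\disc(q_{3,c'})|=64(c'-2)$ read off Table \ref{Table: in_proof} give $c'\le c$, $c'\le c+1$ and $c'\le c+2$ respectively. Thus the only way to exceed $c+1$ is through $q_{3,c+2}$, and the crux is to rule this out. Here I would exploit the congruence hypothesis: since $c\equiv 0,1\Mod{4}$, one has $c+2\equiv 2,3\Mod{4}$, so $q_{3,c+2}$ fails the membership conditions for $Q_{c+2}$ (which require $c+2\equiv 1\Mod{8}$ or $c+2\equiv 0\Mod{4}$), and hence does not occur. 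Alternatively, one checks directly that equality in Proposition \ref{prop: upper_bound} would force $2x_0=z_0=\pm1$, which is impossible in integers, so $q_{3,c+2}\not\to c$. Therefore $c'\le c+1$ in every case, the left-hand side of (\ref{intersection_404}) is contained in the right-hand side, and the proof is complete.
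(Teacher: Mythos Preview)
Your proposal is correct and follows essentially the same route as the paper: the $\supseteq$ inclusion via Theorem \ref{[4,0,4]}, the $\subseteq$ inclusion by forcing $q_{(A,\theta)}$ to be ternary and then invoking Theorem \ref{[4,0,4]} together with the discriminant bound from Proposition \ref{prop: upper_bound} (using the values in Table \ref{Table: in_proof}), and the congruence argument $c,c'\equiv 0,1\Mod 4\Rightarrow c'\neq c+2$ to dispose of the $q_{3,c'}$ case. The paper's proof is terser but structurally identical; your added remarks (the Picard-number observation and the alternative ``equality case'' argument) are extras rather than a different strategy.
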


\begin{proof}
 The left hand side of the equation (\ref{intersection_404}) contains the right hand side of that equation by Theorem \ref{[4,0,4]} as in the proof of Corollary \ref{intersectioncorollary}.

For the other inclusion, for any $(A,\theta)\in H^*(q)\cap H_c,$ we see that $q_{(A,\theta)}$ is a ternary form equivalent to $q_C,$ for some curve $C$
of genus 2; (cf.\ the proof of Corollary \ref{cor: finite_intersection}). Also, since $q_C\rightarrow q,$ it follows from Theorem \ref{[4,0,4]} that $q_C\in Q_{c'},$ for some $c'>1$ with $c'\equiv0,1\Mod{4}$ up to equivalence.

We now show that $c +1\geq c'$ by using a similar argument in the proof of Corollary \ref{intersectioncorollary}. Recall first from Table \ref{Table: in_proof} that $|\disc(q_{3,c'})| = 64(c'-2)$, $|\disc(q_{6,c'})| = 64(c'-1)$ and $|\disc(q_{4,4,c'})| = 64c'$. Since $|\disc(q)| = 64$, we obtain from Proposition \ref{prop: upper_bound} that $64(c'-2) \leq 64c$, $64(c'-1)\leq 64c$ and $64c' \leq 64c$ respectively, and so it follows that $c'\leq c +1$ for the last two cases and that $c'\leq c +2$ for the first situation. Since $c, c' \equiv 0,1 \Mod{4}$, we also have that $c'\leq c +1$ in the last situation, and so the claim follows. This completes the proof of the assertion.
\end{proof}

\begin{ex}\label{example_intersection}  
Consider $H^*(q)\cap H_5,$ where $q=[4,0,4].$ Since $Q_5=\{q_{4,4,5}, q_{6,5}\},$  we have from equation (\ref{intersection_404}) that $H^*(q)\cap H_5=H(q_{4,4,5})\cup H(q_{6,5}).$

\end{ex}

We are in a position to verify Corollary \ref{emptyH(q1)H(q2)} stated in the introduction. This result shows that the analogue of Corollary 8.2 of \cite{SubcoversofCurves} (or Corollary 4 of \cite{Kir}) is not correct for the intersection of the generalized Humbert schemes $H(q),$ where $q$ is a binary form.\newline

\noindent\textit{Proof of} Corollary \ref{emptyH(q1)H(q2)}. It is clear that $q_1$ and $q_2$ are positive forms. Therefore, the first assertion follows from  Theorem 8.1 of \cite{SubcoversofCurves} since $q_1\rightarrow 2^2$ and $q_2\rightarrow 3^2,$ and since $q_1\equiv 0\Mod{4}$ and $q_2\equiv (x+y)^2\equiv0,1\Mod{4}.$ Moreover, since $q_2$ is primitive, it follows from Theorem 9.4 of \cite{SubcoversofCurves} that $H(q_2)$ is an irreducible curve. Also, Proposition 51 (or Proposition 57) of \cite{generalizedhumbert}, we have that $H(q_1)$ is an irreducible curve.

Since $q_1$ and $q_2$ are not equivalent,  for any $(A,\theta)\in H(q_1)\cap H(q_2)$ (if exists) we see that $f:=q_{(A,\theta)}$ is a ternary form (cf.\ the proof of Corollary \ref{cor: finite_intersection}).

If $f$ does not represent 1, then $f$ is equivalent to a form in $Q_5$ by Example \ref{example_intersection} since $H(q_2)\subset H_5.$ However,  we claim that neither  $q_{4,4,5}=[4,4,5,0,0,0]$ nor $q_{6,5}=[4,4,5,0,-4,0]$ can primitively represent $q_2.$  To see this, we observe\footnote{ This observation can be achieved by more elementary ways, e.g., while $q_2 \rightarrow 16$, the form $q_{4,4,5}$ cannot represent primitively 16. However, we believe that the approach applied above is more useful for further applications.} that the reciprocal forms $F_1=F_{q_{4,4,5}} = [5,5,4,0,0,0]$ and $F_2=F_{q_{6,5}} = [5,4,4,0,4,0]$ cannot represent any number $n\equiv 3\Mod{4}$; cf.\ Table \ref{Table: in_proof}. Since we can see that $|I_1|:=|I_1(q_{4,4,5})|=|I_1(q_{6,5})|=16,$  (cf.\ Table \ref{Table: in_proof}), it follows that if $q_2$ was represented primitively  by one of them, then we would have from Theorem 27 of \cite{dicksonsbook} (together with the relations given on \cite[p.~316]{brandt1952mass}) that $F_{i}\rightarrow \frac{-\disc(q_2)}{|I_1|}=\frac{432}{16}=27,$ which contradicts with the observation. Hence, we conclude the claim that both forms cannot primitively represent $q_2$. Therefore, we obtain that $f\rightarrow 1.$ 

In this case, we have that $f\sim f_q:=x^2+4q,$ for some binary form $q$ (cf.\ equation (29) of \cite{refhum}). Since $f\rightarrow q_1, q_2,$ we have that $f_q\rightarrow 4, 9,$ and thus $q\rightarrow 1, 2.$ Since if $q\sim q',$ then $f_q\sim f_{q'},$ we may change $q$ by a reduced form $q',$ and assume that $q$ is a reduced form. Since $q\rightarrow 1,2,$ it follows that $q=[1,b,c],$ with $c\leq2$ by Theorem 5.7.6 of \cite{vollmer2007binary}. By the properties of a reduced form for a binary form (cf.\ Watson\cite[p.~14]{watson1960integral}), we have that $b=0 \textrm{ or } 1,$ and thus we have four possible cases for $q.$ However, we immediately eliminate the case that $q=[1,1,1]$  since it cannot represent 2, and so we have three cases:
\sps

\noindent\textbf{Case 1.} Assume that $q=[1,0,1].$
\spm

\noindent Since the reciprocal form of $f_q$ is $F_{f_q}=[4,1,1,0,0,0]$ in this case, it is clear that $F_{f_q}$ cannot represent a number $n\equiv 3\Mod{4},$ which implies that $f_q$ cannot represent $q_2$ by what was discussed above, and so this case is not possible.
\sps

\noindent\textbf{Case 2.} Assume that $q=[1,0,2].$
\spm

\noindent We see that $F_{f_q}=[8,2,1,0,0,0].$ It is clear that this form cannot represent 4. Since $\frac{-\Delta(q_1)}{|I_1(f_q)|}=\frac{64}{16}=4,$ we again obtain from Theorem 27 of \cite{dicksonsbook} that $f_q$ cannot represent $q_1.$ So, this case is not possible.
\sps

\noindent\textbf{Case 3.} Assume that $q=[1,1,2].$
\spm

\noindent In this case, we have that $F_{f_q}=[7,2,1,-1,0,0].$ Note that $F_{f_q}\equiv 2y^2-yz+z^2\equiv (3y+z)^2\Mod{7},$ which implies that $F_{f_q}$ cannot represent a quadratic nonresidue modulo 7. It thus follows that  $F_{f_q}$ cannot represent 27 since it is not a quadratic residue modulo 7. Thus, we have by what was discussed above that $f_q$ cannot represent $q_2$ in this case, so this case is not possible. Therefore, we have proved that the intersection $H(q_1)\cap H(q_2)=\varnothing.$

\begin{ex}
\label{ex: intersections_3_humbert_surfaces}    
     Here, we want to discuss the (corresponding) curves $C$ of genus 2 with $\Aut(C) \not\simeq C_{2} \times C_{2}$ in the intersection of three exact Humbert surfaces ${H}_{4} \cap {H}_{5} \cap {H}_{8}$. By the formula in 5.2 of \cite{SubcoversofCurves}, we have that if $n \neq m$, then 
     $$
     H_{n} \;\cap\; H_{m} \;=\;\bigcup_{q \rightarrow n, m} H(q),
     $$
     where the union is over all positive definite binary quadratic forms $q$ which represent both $n$ and $m$ primitively. By using the \textit{reduction theory} of binary quadratic forms and this formula, we can find that
$$
H_{4} \cap H_{5}=H([1,0,4]) \cup H([4,0,5]) \cup H([4,4,5]) \text {; }
$$
cf. Proposition 57 of \cite{generalizedhumbert}. Thus, it follows that
$$
H_{8} \cap\left(H_{4} \cap H_{5}\right)=H_{8} \cap(H([1,0,4]) \cup H([4,0,5]) \cup H([4,4,5])).
$$
Via the Torelli map (cf.\ the proof of Proposition \ref{p: bijection} below) we can view the moduli space $\mathcal{M}_{2}$ of the curves of genus 2 as a subset of $\mathcal{A}_{2}$. In this regard, one can see that $H^{*}(q)=H(q) \cap \mathcal{M}_{2}$. By the irreducibility criterion (\ref{thetaisirreducible}), we have that 
$$
\mathcal{H} \;:=\; H_{8} \cap H_{4} \cap H_{5} \cap \mathcal{M}_{2} \;=\; H_{8} \cap(H([4,0,5]) \cup H([4,4,5]).
$$
(Note that $\mathcal{M}_{2} \cap H([1,0,4])=\varnothing$). Now, it is easy to see that $f_{1}:=[4,0,5]$ and $f_{2}:=[4,4,5]$ cannot represent 8, and hence the intersection $\mathcal{H}$ is finite by Corollary \ref{cor: finite_intersection}. Moreover, for any curve $C \in \mathcal{H}$, we have that $J_{C}$ is a CM product abelian surface since $q_{C}$ is ternary. By using the upper bound coming from Proposition \ref{prop: upper_bound} (and the classification of the refined Humbert invariant), one can find all refined Humbert invariants $q_{C}$ corresponding to CM points on $\mathcal{H}$. This is possible by using the relation between the representation numbers of the reciprocal form $F_{f}$ and the represented binary forms by a ternary form $f$ as was discussed in the proof of Corollary \ref{emptyH(q1)H(q2)} (cf.\ Theorem 40 of \cite{dicksonsbook}). 

For simplicity, we want to determine all $q_{C}$ such that $C \in \mathcal{H}$ with $\Aut(C) \not\simeq C_{2} \times C_{2}$. For any $C \in \mathcal{H}$, since $q_{C} \rightarrow 5$, we have that $q_{C}$ is primitive by (\ref{content1or4}). Moreover, if $\Aut(C) \not\simeq C_{2}\times C_{2}$, then since $q_{C} \rightarrow 4$, it follows from Table \ref{Table: primtiive_case} that $\Aut(C) \simeq D_{4}$ or $D_{6}$. Thus, $q_{C} \rightarrow [4,0,4]$ or $[4,4,4]$ respectively by (\ref{eq: statement_thm_4a_D4}) and (\ref{eq: statement_thm_4a_D6}). This implies that $C \in H^{*}[4,0,4] \cap H_{5}$ or $C \in H^{*}[4,4,4] \cap H_{5}$ respectively, and so we apply equations (\ref{intersection444}) and (\ref{intersection_404}). Hence, we obtain the possible list: $q_{C} \in\left\{q_{4,4,5}, q_{6,5}, q_{1,5}, q_{2,5}\right\}$ (up to equivalence).

We immediately see that $q_{6,5}(1,0,1)=[4,-4,5] \sim f_{2}$ and $q_{6,5}(-1,-1,0) = 8$, and so if $q_C\sim q_{6,5}$, then $C \in H_{8} \cap H\left(f_{2}\right)$. We also have that $q_{1,5}$ and $q_{2,5}$ cannot represent 8 primitively. (One can see this by using the expressions in (\ref{eq: expression_q_1c}) and (\ref{eq: expression_q_2c}) respectively). We see that $f_{2}(1,-2)=16$, but $q_{4,4,5}$ cannot primitively represent 16, and so $q_{4,4,5}$ cannot represent primitively $f_{2}$. Hence, if $C \in \mathcal{H}$ and $\Aut(C) \not\simeq C_{2} \times C_{2}$, then $q_{C} \sim q_{6,5}$.
\end{ex}

\section{Elliptic subcovers of a curve of genus 2}
\label{s: elliptic_subcovers}

Let $K$ be an algebraically closed field with $\charec(K) = 0$, and let $C/K$ be a curve of genus 2 over $K$. Recall that an \textit{elliptic subcover} is a finite morphism $\phi: C\rightarrow E$ to an elliptic curve $E/K$ which does not factor over a non-trivial isogeny (i.e., maximal covering) of $E$. Its degree is the degree of the morphism. 
There is a considerable literature on elliptic subcovers. For the reader, we refer to the references given in Bruin and Doerksen\cite{Nils} regarding this topic.
We want to show the
relation  of elliptic subcovers to our main results,  let us recall from
Theorem 1 of Kani\cite{ESCI} that for a curve $C$ of genus 2 we have
\begin{equation}
    \label{eq: elliptic_subcover_fact}
    C/K \text{ has an elliptic subcover of degree } n \Leftrightarrow q_C \text{ primitively represents } n^2.
\end{equation}

 For a given group $G$ and a given number $n$,
 $$
 \mathcal{H}(G,n) \;:=\; \{\langle C \rangle :  C/K \text{ has an elliptic subcover of degree } n \text{ with }\Aut(C)\simeq G \}
 $$
denote the set of isomorphism classes of genus 2 curves $C/K$ with $\Aut(C) \simeq G$ such that it has an elliptic subcover of degree $n$. Shaska\cite{Shaska_subcovers} studied the sets  $ \mathcal{H}(D_4,3) \text{ and } \mathcal{H}(D_6,3) $, and found their cardinalities.  Kani\cite{SubcoversofCurves} and Mukamela\cite{orbifolds} discussed this kind of set in terms of the moduli point of view.   Note that we can  formulate  these sets with the intersection $H^*([4,4,4])\cap H_{m^2}$ (and $H^*([4,0,4])\cap H_{m^2}$) by Torelli map because we have:

\begin{prop}
\label{p: bijection}
    The map $ C\mapsto (J_C, \theta_C)$ induces a bijection between $\mathcal{H}(D_6,m)$ and $H^*([4,4,4])\cap H_{m^2}$, for any odd $m$. 
    Therefore, we conclude that the cardinality of these finite sets is equal, i.e.,
   \begin{equation}
   \label{eq: cardinalities_same_H(D_6)_H(4,4,4)}
       |\mathcal{H}(D_6,m)| \;=\; |H^*([4,4,4])\cap H_{m^2}|.
   \end{equation}
\end{prop}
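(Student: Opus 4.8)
The plan is to combine the Torelli theorem with the two dictionaries (\ref{eq: statement_thm_4a_D6}) and (\ref{eq: elliptic_subcover_fact}), and then to exploit the oddness of $m$ to pin down the automorphism group exactly. First I would recall that, by the Torelli theorem, the assignment $C\mapsto\langle J_C,\theta_C\rangle$ is injective on isomorphism classes of genus $2$ curves, that $q_{(J_C,\theta_C)}=q_C$, and that by the irreducibility criterion (\ref{thetaisirreducible}) its image is precisely the set of $\langle A,\theta\rangle$ whose refined Humbert invariant does not represent $1$. In particular, whenever $\langle A,\theta\rangle$ lies in $H^*([4,4,4])$, the form $q_{(A,\theta)}\sim q_{C'}$ for some genus $2$ curve $C'$ and hence does not represent $1$; by (\ref{thetaisirreducible}) again, $\langle A,\theta\rangle$ is itself a Jacobian $\langle J_C,\theta_C\rangle$ for a unique curve $C$ with $q_C=q_{(A,\theta)}$.

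For the forward inclusion, I would take $C\in\mathcal{H}(D_6,m)$. Since $\Aut(C)\simeq D_6$ we have $D_6\leqslant\Aut(C)$, so $q_C\rightarrow[4,4,4]$ by (\ref{eq: statement_thm_4a_D6}); hence $\langle J_C,\theta_C\rangle\in H([4,4,4])$, and it lies in $H^*$ because it is a Jacobian. Since $C$ has an elliptic subcover of degree $m$, (\ref{eq: elliptic_subcover_fact}) gives $q_C\rightarrow m^2$, i.e. $\langle J_C,\theta_C\rangle\in H_{m^2}$. Thus the map sends $\mathcal{H}(D_6,m)$ into $H^*([4,4,4])\cap H_{m^2}$, and it is injective by Torelli.

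The heart of the argument, and the step I expect to be the main obstacle, is surjectivity: I must rule out automorphism groups strictly larger than $D_6$. Given $\langle A,\theta\rangle$ in the intersection, the first step produces a curve $C$ with $q_C=q_{(A,\theta)}$; by (\ref{eq: statement_thm_4a_D6}) we get $D_6\leqslant\Aut(C)$, and by (\ref{eq: elliptic_subcover_fact}) the curve $C$ has an elliptic subcover of degree $m$. Now I would use oddness: since $q_C$ primitively represents $m^2$, which is odd, $q_C$ represents an odd number, so by (\ref{content1or4}) we must have $\cont(q_C)=1$, i.e. $q_C$ is primitive. Consulting Table \ref{Tab:autC}, the only possible automorphism groups containing a copy of $D_6$ (order $12$) are $D_6$, $C_3\rtimes D_4$ (order $24$) and $\GL_2(3)$ (order $48$); but by Proposition \ref{autgl3} the latter two force $q_C\sim q_{2,4}$ and $q_C\sim q_{1,4}$ respectively, both of content $4$ and hence imprimitive. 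Primitivity therefore excludes them and forces $\Aut(C)\simeq D_6$, so $C\in\mathcal{H}(D_6,m)$ and $\Phi(C)=\langle A,\theta\rangle$.

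Finally I would record finiteness and conclude. The binary form $[4,4,4]=4(x^2+xy+y^2)$ takes only values divisible by $4$, so it does not primitively represent the odd integer $m^2$; by Corollary \ref{cor: finite_intersection} the intersection $H([4,4,4])\cap H_{m^2}$ is finite, and so is its subset $H^*([4,4,4])\cap H_{m^2}$. As the Torelli map gives a bijection between these two finite sets, their cardinalities coincide, which is exactly (\ref{eq: cardinalities_same_H(D_6)_H(4,4,4)}).
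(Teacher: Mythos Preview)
Your proof is correct and follows the same overall architecture as the paper: Torelli for injectivity, the two dictionaries (\ref{eq: statement_thm_4a_D6}) and (\ref{eq: elliptic_subcover_fact}) to land in the right set, oddness of $m$ to force primitivity, and Corollary~\ref{cor: finite_intersection} for finiteness. The one substantive difference is in how you pin down $\Aut(C)\simeq D_6$ in the surjectivity step. The paper argues that $r_4([4,4,4])=6$ gives $r_4(q_C)\geq 6$, and then invokes Proposition~\ref{prop: Corollary_30} (Kani's computation of $r_4$ for primitive ternary $q_C$) to force $r_4(q_C)=6$, whence $\Aut(C)\simeq D_6$ by Table~\ref{Tab:autC}. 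You instead list the groups containing $D_6$ from Table~\ref{Tab:autC} and eliminate $C_3\rtimes D_4$ and $\GL_2(3)$ via Proposition~\ref{autgl3}, which forces their $q_C$ to be the imprimitive forms $q_{2,4}$, $q_{1,4}$. Both routes exploit the same primitivity observation and are equally short; the paper's version has the slight advantage of not appealing to the uniqueness of the two special curves, while yours avoids the case analysis in Proposition~\ref{prop: Corollary_30}. One small point worth making explicit in your write-up: your invocation of (\ref{content1or4}) presupposes $q_C$ is ternary, which you should justify (e.g.\ if $q_C$ were binary then $q_C\sim[4,4,4]$, which cannot represent the odd number $m^2$).
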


\begin{proof} It is clear that if $\mathcal{H}(D_6,m) = \varnothing$, for some $m$, then $H^*([4,4,4])\cap H_{m^2} = \varnothing$ by what was discussed above. Otherwise, to see that the given rule induces a map, let us take $\langle C \rangle \in \mathcal{H}(D_6,m)$, and so $q_C \rightarrow m^2$ by equation (\ref{eq: elliptic_subcover_fact}). This implies that $\langle J_C, \theta_C\rangle \in H_{m^2}$. Moreover, since $m$ is odd, $q_C$ cannot be equivalent to $[4,4,4]$. Therefore,  $q_C$ is a ternary form that primitively represents $[4,4,4]$ by Theorem \ref{[4,4,4]}, and so $\langle J_C, \theta_C \rangle \in H^*([4,4,4])$ by the definition. Thus, it is easy to see that the given rule induces a map $\Psi: \mathcal{H}(D_6,m) \rightarrow H^*([4,4,4])\cap H_{m^2}$. 
   
   We know that this map is injective by Torelli’s Theorem; cf.\ Theorem 12.1 of \cite{milne1986jacobian}, and so it suffices to show that it is surjective. For this, let us take $\langle A, \theta \rangle \in H^*([4,4,4])\cap H_{m^2}$, and it follows from the definition that $\langle A, \theta \rangle = \langle J_C,\theta_C \rangle$, for some curve $C$ of genus 2. Since $q_C$ primitively represents an odd number $m^2$ (by definition), this implies that $C$ has an elliptic subcover of degree $m$ by equation (\ref{eq: elliptic_subcover_fact}) and that $q_C$ is a primitive ternary form (cf.\ (\ref{content1or4})). In addition, we see that $r_4([4,4,4]) = 6$ (cf.\ the proof of Lemma \ref{r4(q)}(i)), and so $r_4(q_C) \geq 6$ by the assumption that $q_C \rightarrow [4,4,4]$. Hence, it follows from Proposition \ref{prop: Corollary_30} that $r_4(q_C) = 6$, and so $\Aut(C)\simeq D_6$ by Table \ref{Tab:autC}. Thus, it follows that $ \langle J_C,\theta_C\rangle = \Psi(\langle C \rangle)$ lies in the image of $\Psi$, which shows that $\Psi$ is surjective, and so it is bijective as asserted.

   We know that $|H^*([4,4,4])\cap H_{m^2}|$ is finite by Corollary \ref{cor: finite_intersection}, and so, equation (\ref{eq: cardinalities_same_H(D_6)_H(4,4,4)}) follows.
\end{proof}

By the similar arguments in the proof of Proposition \ref{p: bijection},  $\mathcal{H}(D_6,m)$ can be formulated in terms of $H^*([4,4,4])\cap H_{m^2}$, for all $m$ (not necessarily odd) provided $[4,4,4]$ does not represent $m$ primitively. For this, one only has to take care of the fact that $(J_C,\theta_C)$  might be in $H^*([4,4,4])\cap H_{m^2}$ when $\Aut(C) \simeq \GL_2(3)$ or $C_3 \rtimes D_4$.

For any odd $m$, we have from (\ref{intersection444}) that
$$
H^*([4,4,4]) \;\cap\; H_{m^2} \;=\;
    \bigcup_{\mathclap{\substack{q_{i,c'}\rightarrow m^2, \ i=1, 2  }}}
          \;(H(q_{1,c'})\cup H(q_{2,c'})),\;
$$ where $1<c'\leq m^2+1$ and $ c'\equiv 1\Mod{4} $. (Note that $c'$ cannot be even).  By the definition, we see that  the union  $H(q_{1,c'})\cup H(q_{2,c'})$ is disjoint for any $c'$. Therefore, it follows from (\ref{eq: cardinalities_same_H(D_6)_H(4,4,4)}) that 
\begin{equation}
\label{eq: |H(D6,m)|=|H(q)|}
     |\mathcal{H}(D_6,m)| \;=\; \sum_{\substack{q_{i,c'} \rightarrow m^2 }} |H(q_{1,c'})|+| H(q_{2,c'})|, 
\end{equation}
where $1<c'\leq m^2+1$, $ c^{\prime} \equiv 1 \Mod{4}$ and $i = 1,2$. Thus, the cardinality $|\mathcal{H}(D_6,m)|$ is formulated in terms of $|H(q)|$, for certain ternary forms $q$.

\begin{ex}
\label{ex: Shaska's_verified}
We want to illustrate the case that $m = 3$ in Proposition \ref{p: bijection} because the set $\mathcal{H}(D_6,3)$ was explicitly discussed in the main results of \cite{Shaska_subcovers}. For this,  observe first that $q_{i,c}$ cannot represent 9 for an even number $c$, and for $i=1, 2$. Moreover, it is clear that $q_{i,9}(0,0,1) = 9$, for $i=1, 2$ and that $q_{1,5}(-1,-1,1) = 9$ and $q_{2,5}(1,1,1) = 9$. Thus, by Corollary \ref{intersectioncorollary}, we have that
\begin{equation}
\label{eq: H(4,4,4)andH_9}
    H^*([4,4,4])\cap H_9 \;=\; H(q_{1,5})\cup H(q_{1,9})\cup H(q_{2,5}) \cup H(q_{2,9}),
\end{equation}
and it follows by (\ref{eq: |H(D6,m)|=|H(q)|}) that
$$
 |\mathcal{H}(D_6,3)| \;=\; |H(q_{1,5})| \;+\; |H(q_{1,9})| \;+\; |H(q_{2,5})| \;+\; |H(q_{2,9})|.
$$


\end{ex}

By using the properties of the refined Humbert invariant, it is possible to reprove many parts of Shaska's main results; cf.\ Remark \ref{rem: cardinality_of_H(D_6)} and Example \ref{ex: q_1_5} below. Recall that the sets $\mathcal{H}(D_6,3)$ and $\mathcal{H}(D_4,3)$ are determined, and the equations/models for the curves in these sets defined over $\Q$ are given in Theorem 1, Corollary 1 and Table 1 of \cite{Shaska_subcovers}.

\begin{remark}
\label{rem: cardinality_of_H(D_6)}
  
By what was mentioned above, if one finds a formula for the number $|H(q)|$, for a ternary form $q$, then it leads to a formula for the cardinality $|\mathcal{H}(D_6,m)|$. Recently,  the formulas for $|H(q)|$, for any ternary form $q$, have been found and are in preparation by a joint work in preparation with Kani. By using this formula, we obtain that if $h(\Delta)$ denotes the class number of the discriminant $\Delta$, then
$$
|\mathcal{H}(D_6,m)| \;=\; \sum_{\substack{q_{1,c'} \rightarrow m^2 }} h\left(4-3 c^{\prime}\right) + \sum_{\substack{q_{2,c'} \rightarrow m^2 \\ 3\mid c'}} h\left(-3c^{\prime}\right)  + \sum_{\substack{q_{2,c'} \rightarrow m^2 \\  3\nmid c'}} \frac{h\left(-3c^{\prime}\right)}{2},
$$
where $5 \leqslant c^{\prime} \leqslant m^2+1$ and $c^{\prime} \equiv 1 \Mod{4}$. Notice that it easily follows that $|\mathcal{H}(D_6,3)| = 6$ by this formula.
\end{remark}

Observe that one similarly proves the analogue of Proposition \ref{p: bijection} for $D_4$, and shows the bijection between $\mathcal{H}(D_4,m)$ and $H^*([4,0,4])\cap H_{m^2}$, and obtain a formula for the cardinality $|\mathcal{H}(D_4,m)|$ by what was discussed in Remark \ref{rem: cardinality_of_H(D_6)}.

 We  can say more than finding the cardinality $|\mathcal{H}(D_6,3)|$  about the curves $C/K$ in $\mathcal{H}(D_6,3)$ by using the properties of $q_C$. The following examples illustrate two curves in the set $\mathcal{H}(D_6,3)$.

\begin{ex}
\label{ex: q_1_5}
    We discuss the curve $C$ with $q_C\sim q_{1,5}$ (cf.\ Example 6 of \cite{Shaska_subcovers}). By Remark \ref{remark_C10} and equation (\ref{thetaisirreducible}), we know that $J_C \simeq E_1\times E_2$ is a CM product surface. Recall that for two isogenous elliptic curves $E,E',$ we have the \textit{degree} form $q_{E,E'}$ on $\Hom(E,E')$ which is defined by $q_{E,E'}(h)=\deg(h),$ for $h\in \Hom(E,E')$, and we know that it is a positive definite binary quadratic form in the CM case. Thus, $q_{E_1,E_2}$ is a binary form, and by Lemma 30 and equation (37) of \cite{ESCI}, we see that $ \disc(q_C)/16 = \disc(q_{E_1,E_2}) = -11$, and so it easily follows that $q_{E_1,E_2}\sim [1,1,3]$  by the reduction theory of binary quadratic forms. Since it represents $1$,  it follows that $E_1\simeq E_2$. Let us put $F = \End(E_1)\otimes \Q$. By Corollary 42 of \cite{kani2011products} we find that $\disc(q_{E_1, E_2}) = \Delta_F$, where $\Delta_F$ is the discriminant of the quadratic number field $F$ and that $\End(E_1)$ is isomorphic to the maximal order of $F$ (since $-11$ is a fundamental discriminant).
    
     Moreover, since the class number $h(-11) = 1$, we know that the isomorphism class of the elliptic curve $E_1/K$ is uniquely determined by  the endomorphism ring of $E_1/K$; cf.\ equation (55) of \cite{kani2011products}. Moreover, we see from \cite[p.~483]{silverman1994advanced} that the $j$-invariant $j(E_1) = -2^{15}$ and that $E_1/K$ is given by the equation $y^2 + y = x^3 -x^2 - 7x +10$.

We can next find that $|H(q_{1,5})| = 1$ by using the formula mentioned in Remark \ref{rem: cardinality_of_H(D_6)}. However, since we don't provide proof for that formula in this article, we don't want to use it in this example. Instead, we use Hayashida's formula\cite{hayashida1968class}, \S7 (cf.\ the computer calculations of \cite{gelin2019principally} derived from Hayashida's formula). 

 Note that we are in the case that $J_C\simeq E_1\times E_1$, where $\End(E_1)$ is the maximal order of $\Q(\sqrt{-11})$, and thus Hayashida's formula applies, 
 and it implies that there is a unique curve $C'$ of genus 2 on $E_1\times E_1 \simeq J_C$, and so $C\simeq C'$. Thus, we conclude from Table 4 of \cite{gelin2019principally} that   $C$ is given by the equation $ y^2 = 2x^6 + 11x^3 - 22$. Observe that this curve is isomorphic to the curve $C'': y^2 = 11x^6 + 11x^3 - 4$, where $C''$ is the curve which was given in the sixth row of Table 1 of \cite{Shaska_subcovers} because both curves have the same  \textit{Igusa-Clebsch invariants} $(I_2, I_4, I_6, I_{10}) = (180576, 361304064, 31762962874368, 38770566118886080512)$.
\end{ex}

\begin{ex}
    Consider the curve $C$ with $q_c \sim q_{2,5}$ (cf. Example 7 of \cite{Shaska_subcovers}). As in Example \ref{ex: q_1_5}, we see that $J_C \simeq E_1\times E_2$ is a CM abelian product surface, and $-16 \disc\left(q_{E_1, E_2}\right)=\disc\left(q_{2,5}\right)=-16.15$ (cf.\ Table \ref{Table: in_proof}). By Corollary 42 of \cite{kani2011products}, we see that $\End(E_1) \simeq \End(E_2)$ is the maximal order $\mathcal{O}$ of $\Q(\sqrt{-15})$. 
    
    If $q_{E_1, E_2}$ represents 1, then $E_1 \simeq E_2$, but then by Hayashida/Nishi\cite{hayashida1965existence} (or Theorem 2 of \cite{kani2014jacobians}), $E_1 \times E_2$ cannot be a Jacobian of a genus 2 curve, which is a contradiction, and so $q_{E_1, E_2}$ cannot represent 1 . Thus, by the reduction theory, it follows that $q_{E_1, E_2} \sim[2,1,2]$, and so this implies that $J_C\not\simeq E\times E$, for any CM elliptic curve $E$. Since $h(-15)=2$, there are two elliptic curves $E/K$ with $\End(E)=\mathcal{O}$ up to isomorphism over $K$. One can determine the $j$-invariants of these elliptic curves by finding the roots of the Hilbert class polynomial of discriminant $-15$; e.g., see the elliptic curves with LMFDB labels \cite[\href{https://www.lmfdb.org/EllipticCurve/2.2.5.1/81.1/a/1}{Elliptic Curve 81.1.a1}]{lmfdb} and \cite[\href{https://www.lmfdb.org/EllipticCurve/2.2.5.1/81.1/a/3}{Elliptic Curve 81.1.a3}]{lmfdb}.

     By using the formulas in \cite{cas}, we see that there is a unique genus 2 curve on $J_C$ up to isomorphism.

     We observe by equation (\ref{eq: expression_q_2c}) that
     $$
     R_9(q_{2,5}) \;=\; \{\pm(0,1,1), \pm(0,-1,1), \pm(1,0,1), \pm(-1,0,1), \pm(-1,-1,1), \pm(1,1,1) \},
     $$ 
and so there are $12$  primitive triples $(a,b,c)$ such that $q_{2,5}(a,b,c) = 9$, and thus there are 12 elliptic subcovers of degree $3$ of $C$ (up to isomorphism) by Theorem 4.5 of \cite{kani1994elliptic}.
\end{ex}

\bibliographystyle{plainurl}
\bibliography{kir_aut.bib}

@book {vollmer2007binary,
    AUTHOR = {Buchmann, J. and Vollmer, U.},
     TITLE = {Binary quadratic forms},
    SERIES = {Algorithms and Computation in Mathematics},
    VOLUME = {20},
      NOTE = {An algorithmic approach},
 PUBLISHER = {Springer, Berlin},
      YEAR = {2007},
     PAGES = {xiv+318},
      ISBN = {978-3-540-46367-2; 3-540-46367-4},
   MRCLASS = {11E16 (11-01 11R11 11Y40)},
  MRNUMBER = {2300780},
MRREVIEWER = {Duncan\ A.\ Buell},
}

@incollection {MitaniShioda,
    AUTHOR = {Shioda, T. and Mitani, N.},
     TITLE = {Singular abelian surfaces and binary quadratic forms},
 BOOKTITLE = {Classification of algebraic varieties and compact complex
              manifolds},
     PAGES = {259--287. Lecture Notes in Math., Vol. 412},
     PUBLISHER = {Springer, Berlin},
     YEAR = {(1974)},
   MRCLASS = {14J20 (14K15)},
  MRNUMBER = {0382289},
MRREVIEWER = {P. Roquette},
}

@article {Igusa,
AUTHOR = {Igusa, J.-I.},
TITLE = {Arithmetic variety of moduli for genus two},
JOURNAL = {Ann. of Math. (2)},
FJOURNAL = {Annals of Mathematics. Second Series},
VOLUME = {72},
YEAR = {1960},
PAGES = {612--649},
ISSN = {0003-486X},
MRCLASS = {14.00},
MRNUMBER = {114819},
MRREVIEWER = {I.\ Barsotti},
DOI = {10.2307/1970233},
URL = {https://doi.org/10.2307/1970233},
}

@article {accola,
AUTHOR = {Accola, R. D. M. and Previato, E.},
TITLE = {Covers of tori: genus two},
JOURNAL = {Lett. Math. Phys.},
FJOURNAL = {Letters in Mathematical Physics},
VOLUME = {76},
YEAR = {2006},
NUMBER = {2-3},
PAGES = {135--161},
ISSN = {0377-9017,1573-0530},
MRCLASS = {14H10 (14H30 14H52 14H70 37K20)},
MRNUMBER = {2235401},
MRREVIEWER = {Tony\ Shaska},
DOI = {10.1007/s11005-006-0067-5},
URL = {https://doi.org/10.1007/s11005-006-0067-5},
}

@article {brandt1951zahlentheorie,
    AUTHOR = {Brandt, H.},
     TITLE = {Zur {Z}ahlentheorie der tern\"{a}ren quadratischen {F}ormen},
   JOURNAL = {Math. Ann.},
  FJOURNAL = {Mathematische Annalen},
    VOLUME = {124},
      YEAR = {1952},
     PAGES = {334--342},
      OPTISSN = {0025-5831,1432-1807},
   MRCLASS = {10.0X},
  MRNUMBER = {51269},
MRREVIEWER = {R.\ Hull},
       OPTDOI = {10.1007/BF01343574},
       URL = {https://doi.org/10.1007/BF01343574},
}

@article {brandt1952mass,
    AUTHOR = {Brandt, H.},
     TITLE = {\"{U}ber das {M}ass positiver tern\"{a}rer quadratischer
              {F}ormen},
   JOURNAL = {Math. Nachr.},
  FJOURNAL = {Mathematische Nachrichten},
    VOLUME = {6},
      YEAR = {1952},
     PAGES = {315--318},
      OPTISSN = {0025-584X,1522-2616},
   MRCLASS = {10.0X},
  MRNUMBER = {51268},
MRREVIEWER = {R.\ Hull},
       OPTDOI = {10.1002/mana.19520060507},
       URL = {https://doi.org/10.1002/mana.19520060507},
}

@article {kani2011products,
    AUTHOR = {Kani, E.},
     TITLE = {Products of {CM} elliptic curves},
   JOURNAL = {Collect. Math.},
  FJOURNAL = {Collectanea Mathematica},
    VOLUME = {62},
      YEAR = {2011},
    NUMBER = {3},
     PAGES = {297--339},
      OPTISSN = {0010-0757},
   MRCLASS = {11G10 (11G15 14H52 14K02 14K22 14L15)},
  MRNUMBER = {2825715},
MRREVIEWER = {Joseph H. Silverman},
       OPTDOI = {10.1007/s13348-010-0029-1},
        URL = {https://doi.org/10.1007/s13348-010-0029-1},
}

@article {kani2014jacobians,
    AUTHOR = {Kani, E.},
     TITLE = {Jacobians isomorphic to a product of two elliptic curves and ternary quadratic forms},
   JOURNAL = {J. Number Theory},
  FJOURNAL = {Journal of Number Theory},
    VOLUME = {139},
      YEAR = {2014},
     PAGES = {138--174},
      OPTISSN = {0022-314X,1096-1658},
   MRCLASS = {14H40 (11G10 11G15 11G18 14H30 14H52)},
  MRNUMBER = {3173190},
MRREVIEWER = {Edward\ F.\ Schaefer},
       OPTDOI = {10.1016/j.jnt.2013.12.006},
       URL = {https://doi.org/10.1016/j.jnt.2013.12.006},
}

@article {ESCI,
    AUTHOR = {Kani, E.},
     TITLE = {Elliptic subcovers of a curve of genus 2. {I}. {T}he isogeny defect},
   JOURNAL = {Ann. Math. Qu\'{e}.},
  FJOURNAL = {Annales Math\'{e}matiques du Qu\'{e}bec},
    VOLUME = {43},
      YEAR = {2019},
    NUMBER = {2},
     PAGES = {281--303},
      OPTISSN = {2195-4755,2195-4763},
   MRCLASS = {14H30 (14H05 14H25 14H40)},
  MRNUMBER = {3996071},
MRREVIEWER = {Giancarlo\ Urz\'{u}a},
       OPTDOI = {10.1007/s40316-018-0105-6},
       URL = {https://doi.org/10.1007/s40316-018-0105-6},
}

@article {ESCII,
    AUTHOR = {Kani, E.},
     TITLE = {Elliptic subcovers of a curve of genus 2 {II}. {T}he refined
              {H}umbert invariant},
   JOURNAL = {J. Number Theory},
  FJOURNAL = {Journal of Number Theory},
    VOLUME = {193},
      YEAR = {2018},
     PAGES = {302--335},
      OPTISSN = {0022-314X,1096-1658},
   MRCLASS = {14H30 (11G30 14H40)},
  MRNUMBER = {3846811},
MRREVIEWER = {Sajad\ Salami},
       OPTDOI = {10.1016/j.jnt.2018.05.011},
       URL = {https://doi.org/10.1016/j.jnt.2018.05.011},
}

@incollection {SubcoversofCurves,
    AUTHOR = {Kani, E.},
     TITLE = {Subcovers of curves and moduli spaces},
 BOOKTITLE = {Geometry at the frontier},
    SERIES = {Contemp. Math.},
    VOLUME = {766},
     PAGES = {229--250},
 PUBLISHER = {Amer. Math. Soc., RI},
      YEAR = {(2021)},
      OPTISBN = {978-1-4704-5327-5},
   MRCLASS = {14G35},
  MRNUMBER = {4248056},
MRREVIEWER = {Nicolae\ Manolache},
       OPTDOI = {10.1090/conm/766/15384},
       URL = {https://doi.org/10.1090/conm/766/15384},
}

@article{generalizedhumbert,
  title={Generalized {Humbert} schemes and intersections of {Humbert} surfaces },
  author={Kani, E.},
   year={Preprint, 35 pages},
  
}

@article{refhum,
  title={The refined {Humbert} invariant for
abelian product surfaces with complex
multiplication },
  author={Kani, E.},
   year={Preprint, 23 pages},
  
}

@article{cas,
  title={ Curves of genus 2 on abelian surfaces },
  author={Kani, E.},
   year={Preprint, 37 pages},
  
}

@article{damien,
  title={Eﬀicient algorithms for abelian varieties and their moduli spaces.},
  author={Robert, D.},
   year={2021},
journal={Université de Bordeaux (UB)},
 URL = {https://hal.science/tel-03498268/document},

}

@article{damien2,
  title={Modular polynomials on {H}ilbert surfaces, https://hal.science/hal-01520262v2},
  author={Milio, E. and Robert, D.},
   year={2017},
 URL = {https://hal.science/hal-01520262v2}
  
}

@article {Nils,
    AUTHOR = {Bruin, N. and Doerksen, K.},
     TITLE = {The arithmetic of genus two curves with {$(4,4)$}-split
              {J}acobians},
   JOURNAL = {Canad. J. Math.},
  FJOURNAL = {Canadian Journal of Mathematics. Journal Canadien de
              Math\'{e}matiques},
    VOLUME = {63},
      YEAR = {2011},
    NUMBER = {5},
     PAGES = {992--1024},
      ISSN = {0008-414X,1496-4279},
   MRCLASS = {11G30 (14H40)},
  MRNUMBER = {2866068},
MRREVIEWER = {Benjamin\ Smith},
       DOI = {10.4153/CJM-2011-039-3},
       URL = {https://doi.org/10.4153/CJM-2011-039-3},
}

@article {MJ,
    AUTHOR = {Kani, E.},
     TITLE = {The moduli spaces of {J}acobians isomorphic to a product of
              two elliptic curves},
   JOURNAL = {Collect. Math.},
  FJOURNAL = {Collectanea Mathematica},
    VOLUME = {67},
      YEAR = {2016},
     PAGES = {21--54},
      OPTISSN = {0010-0757},
   MRCLASS = {14H10 (14H40)},
  MRNUMBER = {3439838},
MRREVIEWER = {Francisco J. Plaza Mart\'{\i}n},
       OPTDOI = {10.1007/s13348-015-0148-9},
       URL = {https://doi.org/10.1007/s13348-015-0148-9},
}

@book {van2012hilbert,
    AUTHOR = {van der Geer, G.},
     TITLE = {Hilbert modular surfaces},
    SERIES = {\emph{Ergebnisse der Mathematik und ihrer Grenzgebiete (3)}},
    VOLUME = {16},
 PUBLISHER = {Springer-Verlag, Berlin},
      YEAR = {(1988)},
     PAGES = {x+291},
      OPTISBN = {3-540-17601-2},
   MRCLASS = {11F41 (11G10 11G15 14J20)},
  MRNUMBER = {930101},
MRREVIEWER = {O.\ V.\ Shvartsman},
       OPTDOI = {10.1007/978-3-642-61553-5},
       URL = {https://doi.org/10.1007/978-3-642-61553-5},
}

@article {hayashida1968class,
AUTHOR = {Hayashida, T.},
TITLE = {A class number associated with the product of an elliptic curve with itself},
JOURNAL = {J. Math. Soc. Japan},
FJOURNAL = {Journal of the Mathematical Society of Japan},
VOLUME = {20},
YEAR = {1968},
PAGES = {26--43},
ISSN = {0025-5645,1881-1167},
MRCLASS = {10.68 (14.00)},
MRNUMBER = {233804},
DOI = {10.2969/jmsj/02010026},
URL = {https://doi.org/10.2969/jmsj/02010026},
}

@article {hayashida1965existence,
    AUTHOR = {Hayashida, T. and Nishi, M.},
     TITLE = {Existence of curves of genus two on a product of two elliptic
              curves},
   JOURNAL = {J. Math. Soc. Japan},
  FJOURNAL = {Journal of the Mathematical Society of Japan},
    VOLUME = {17},
      YEAR = {1965},
     PAGES = {1--16},
      ISSN = {0025-5645,1881-1167},
   MRCLASS = {14.51 (14.40)},
  MRNUMBER = {201434},
MRREVIEWER = {Kenneth\ F.\ Ireland},
       DOI = {10.2969/jmsj/01710001},
       URL = {https://doi.org/10.2969/jmsj/01710001},
}

@article {Mcmullen,
    AUTHOR = { C. McMullen},
     TITLE = {Teichm\"{u}ller curves in genus two: discriminant and spin},
   JOURNAL = {Math. Ann.},
  FJOURNAL = {Mathematische Annalen},
    VOLUME = {333},
      YEAR = {2005},
    NUMBER = {1},
     PAGES = {87--130},
      OPTISSN = {0025-5831,1432-1807},
   MRCLASS = {32G15 (30F10 37D50)},
  MRNUMBER = {2169830},
MRREVIEWER = {Thomas\ A.\ Schmidt},
      OPTDOI = {10.1007/s00208-005-0666-y},
       URL = {https://doi.org/10.1007/s00208-005-0666-y},
}

@inproceedings {gelin2019principally,
    AUTHOR = {G\'{e}lin, A. and Howe, E. and Ritzenthaler,
              C.},
     TITLE = {Principally polarized squares of elliptic curves with field of
              moduli equal to {$\mathbb{Q}$}},
 BOOKTITLE = {Proc. 13th {A}lgorithmic {N}umber
              {T}heory {S}ymposium},
    SERIES = {Open Book Ser. 2},
    VOLUME = {},
     PAGES = {257--274},
 PUBLISHER = {},
      YEAR = {(2019)},
      OPTISBN = {978-1-935107-03-3; 978-1-935107-02-6},
   MRCLASS = {11G15 (14H25 14H40)},
  MRNUMBER = {3952016},
MRREVIEWER = {James\ H.\ Stankewicz},
URL = {https://msp.org/obs/2019/2-1/p16.xhtml},
}

@book {silverman1994advanced,
    AUTHOR = {Silverman, J. H.},
     TITLE = {Advanced topics in the arithmetic of elliptic curves},
    SERIES = {Graduate Texts in Mathematics},
    VOLUME = {151},
 PUBLISHER = {Springer-Verlag, New York},
      YEAR = {1994},
     PAGES = {xiv+525},
      ISBN = {0-387-94328-5},
   MRCLASS = {11G05 (11G07 11G15 11G40 14H52)},
  MRNUMBER = {1312368},
MRREVIEWER = {Henri\ Darmon},
       DOI = {10.1007/978-1-4612-0851-8},
       URL = {https://doi.org/10.1007/978-1-4612-0851-8},
}

@article {kani1994elliptic,
    AUTHOR = {Kani, E.},
     TITLE = {Elliptic curves on abelian surfaces},
   JOURNAL = {Manuscripta Math.},
  FJOURNAL = {Manuscripta Mathematica},
    VOLUME = {84},
      YEAR = {1994},
    NUMBER = {2},
     PAGES = {199--223},
      OPTISSN = {0025-2611,1432-1785},
   MRCLASS = {14K10 (14J25 14K20)},
  MRNUMBER = {1285957},
MRREVIEWER = {C.\ A. M. Peters},
       OPTDOI = {10.1007/BF02567454},
       URL = {https://doi.org/10.1007/BF02567454},
}

@incollection {Shaska_subcovers,
    AUTHOR = {Shaska, T.},
     TITLE = {Genus 2 curves with {$(3,3)$}-split {J}acobian and large
              automorphism group},
 BOOKTITLE = {Algorithmic number theory ({S}ydney, 2002)},
    SERIES = {Lecture Notes in Comput. Sci.},
    VOLUME = {2369},
     PAGES = {205--218},
 PUBLISHER = {Springer, Berlin},
      YEAR = {2002},
      ISBN = {3-540-43863-7},
   MRCLASS = {14H37 (14H45)},
  MRNUMBER = {2041085},
MRREVIEWER = {Sadok\ Kallel},
       DOI = {10.1007/3-540-45455-1\_17},
       URL = {https://doi.org/10.1007/3-540-45455-1_17},
}

@article {orbifolds,
    AUTHOR = {Mukamel, R. E.},
     TITLE = {Orbifold points on {T}eichm\"{u}ller curves and {J}acobians
              with complex multiplication},
   JOURNAL = {Geom. Topol.},
  FJOURNAL = {Geometry \& Topology},
    VOLUME = {18},
      YEAR = {2014},
    NUMBER = {2},
     PAGES = {779--829},
      ISSN = {1465-3060,1364-0380},
   MRCLASS = {32G15 (14H10 14K22)},
  MRNUMBER = {3180485},
MRREVIEWER = {Dawei\ Chen},
       DOI = {10.2140/gt.2014.18.779},
       URL = {https://doi.org/10.2140/gt.2014.18.779},
}

@incollection {shaska2004elliptic,
    AUTHOR = {Shaska, T. and V\"{o}lklein, H.},
     TITLE = {Elliptic subfields and automorphisms of genus 2 function
              fields},
 BOOKTITLE = {Algebra, arithmetic and geometry with applications ({W}est
              {L}afayette, {IN}, 2000)},
     PAGES = {703--723},
 PUBLISHER = {Springer, Berlin},
      YEAR = {2004},
      ISBN = {3-540-00475-0},
   MRCLASS = {14H05 (12F99 14H10 14H37 14H52)},
  MRNUMBER = {2037120},
MRREVIEWER = {Vasil\cprime \ \={I}.\ Andr\={\i}\u{\i}chuk},
}

@article {cardona_quer,
    AUTHOR = {Cardona, G. and Quer, J.},
     TITLE = {Curves of genus 2 with group of automorphisms isomorphic to
              {$D_8$} or {$D_{12}$}},
   JOURNAL = {Trans. Amer. Math. Soc.},
  FJOURNAL = {Transactions of the American Mathematical Society},
    VOLUME = {359},
      YEAR = {2007},
    NUMBER = {6},
     PAGES = {2831--2849},
      ISSN = {0002-9947,1088-6850},
   MRCLASS = {11G30 (14G27)},
  MRNUMBER = {2286059},
MRREVIEWER = {Nils\ R.\ Bruin},
       DOI = {10.1090/S0002-9947-07-04111-6},
       URL = {https://doi.org/10.1090/S0002-9947-07-04111-6},
}

@book {watson1960integral,
    AUTHOR = {Watson, G. L.},
     TITLE = {Integral quadratic forms},
    OPTSERIES = {Cambridge Tracts in Mathematics and Mathematical Physics},
    OPTVOLUME = {No. 51},
 PUBLISHER = {Cambridge U. Press, Cambridge},
      YEAR = {(1960)},
     PAGES = {xii+143},
   MRCLASS = {10.00},
  MRNUMBER = {118704},
MRREVIEWER = {B.\ W.\ Jones},
}

@book{dicksonsbook,
  title={Studies in Number Theory},
  author={ Dickson, L.},
  year={(1957)},
  publisher={U Chicago Press, Chicago, 1930.
Reprinted by Chelsea Publ. Co., New York}
}

@book{smith,
  title={On the orders and genera of ternary quadratic forms \emph{{(1867)}}},
  author={Smith, J. H. S.},
  year={(1894)},
  publisher={In: Collect. Math. Papers vol. I, Oxford, pp. 455\textsc{\textendash}509}
}

@misc{lmfdb,
  shorthand    = {LMFDB},
  author       = {The {LMFDB Collaboration}},
  title        = {The {L}-functions and modular forms database},
  howpublished = {\url{https://www.lmfdb.org}},
  year         = {2023},
  note         = {[Online; accessed 21 October 2023]},
}

@book {mumford1970abelian,
    AUTHOR = {Mumford, D.},
     TITLE = {Abelian varieties},
    OPTSERIES = {Tata Institute of Fundamental Research Studies in Mathematics},
    OPTVOLUME = {5},
 PUBLISHER = {2nd edn. Oxford University Press, London},
      YEAR = {(1970)},
     PAGES = {viii+242},
   MRCLASS = {14.51},
  MRNUMBER = {282985},
MRREVIEWER = {James\ Milne},
}

@incollection {milne1986jacobian,
    AUTHOR = {Milne, J. S.},
     TITLE = {Jacobian varieties},
 BOOKTITLE = {Arithmetic geometry ({S}torrs, {C}onn., 1984)},
     PAGES = {167--212},
 PUBLISHER = {Springer, New York},
      YEAR = {1986},
      ISBN = {0-387-96311-1},
   MRCLASS = {14H40},
  MRNUMBER = {861976},
}

@book {davidcox,
    AUTHOR = {Cox, D. A.},
     TITLE = {Primes of the form {$x^2 + ny^2$}},
    SERIES = {A Wiley-Interscience Publ.},
      OPTNOTE = {Fermat, class field theory and complex multiplication},
 PUBLISHER = {John Wiley \& Sons, Inc., New York},
      YEAR = {(1989)},
     PAGES = {xiv+351},
      OPTISBN = {0-471-50654-0; 0-471-19079-9},
   MRCLASS = {11A41 (11F11 11R11 11R16 11R18 11R37 11Y11)},
  MRNUMBER = {1028322},
MRREVIEWER = {Andrew\ Bremner},
URL = {https://onlinelibrary.wiley.com/doi/book/10.1002/9781118032756},
}

@article{Kir,
  title={The classification of the refined {H}umbert invariant},
  author={Kir, H.},
   year={Preprint, 36 pages},
  
}

\end{document}